\documentclass[a4paper,11pt]{amsart}
\usepackage[utf8]{inputenc}
\usepackage{amsmath,amssymb,amsfonts,mathrsfs,mathtools}
\usepackage{graphicx}
\usepackage[colorlinks=true, linkcolor=red, citecolor=blue]{hyperref}
\usepackage{tikz}
\numberwithin{equation}{section}

\newtheorem{theorem}{Theorem}[section]
\newtheorem{proposition}[theorem]{Proposition}
\newtheorem{lemma}[theorem]{Lemma}

\theoremstyle{remark}
\newtheorem{remark}{Remark}[section]

\newcommand{\dis}{\displaystyle}

\newcommand{\vertiii}[1]{\left\vert\!\left\vert\!\left\vert #1 \right\vert\!\right\vert\!\right\vert}  
\numberwithin{equation}{section}
\makeatletter
\@namedef{subjclassname@2010}{\textup{2020} Mathematics Subject Classification}
\makeatother

\begin{document}
	
	\pagenumbering{arabic}
	
	\title[High order KP system: Control and stabilization results]{Unique Continuation for Fifth-Order KP Equation and its application to control problems}
		
	
	\author[Capistrano-Filho]{Roberto de A. Capistrano-Filho\textsuperscript{*}}
	\address{Departamento de Matem\'atica, Universidade Federal de Pernambuco (UFPE), 50740-545, Recife (PE), Brazil}
	\email{\url{roberto.capistranofilho@ufpe.br}}
	
	\author[Nascimento]{Ailton C. Nascimento}
	\address{Departamento de Matem\'atica, Universidade Federal do Piau\'i (UFPI), 64049-550, Teresina (PI), Brazil}
	\email{\url{ailton.nascimento@ufpi.edu.br}}
	
	\thanks{\textsuperscript{*}Corresponding author: \url{roberto.capistranofilho@ufpe.br}}
	
	\subjclass[2010]{35Q53, 93D15, 93D30, 93C20}
	\keywords{Kadomtsev-Petviashvili equation, unique continuation, propagation of regularity, control problems}

\begin{abstract}
We develop a framework for the fifth-order Kadomtsev--Petviashvili equation on $\mathbb{T}_x \times \mathbb{R}_y$ within a mean-zero KP-adapted Sobolev scale. A localized high-order feedback acting on the periodic variable yields a $5/2$--derivative gain in suitable space--time norms, leading to propagation of regularity and a unique continuation property for the linear dynamics. As a consequence, we derive an observability inequality for the adjoint system and establish exponential stabilization of the nonlinear closed-loop equation: for small initial data in $X_{s,0}$, $s>2$, solutions are global and decay exponentially in $X_s$. Combining observability with the Hilbert Uniqueness Method and a fixed-point argument, we obtain local exact controllability near the origin, with $L^2$ controls supported in the feedback region and cost linear in the data size. The analysis relies on a novel combination of unique continuation, frequency grouping, and the one-sided Fourier vanishing mechanism introduced for the Benjamin--Ono equation by Linares and Rosier in \textit{Trans. Amer. Math. Soc.} (2015)~\cite{LR}, here extended to the fifth-order Kadomtsev--Petviashvili equation.
\end{abstract}
	
	\maketitle
	
	\section{Introduction}
We study the fifth-order Kadomtsev-Petviashvili (KP5) equation posed on the cylindrical domain $\Omega:=\mathbb{T}_x\times\mathbb{R}_y$, namely
\begin{equation}\label{aKPe2D}
\left\{
\begin{aligned}
\partial_t u + \beta\,\partial_x^{5}u + u\,\partial_x u
+ \gamma\,\partial_x^{-1}\partial_y^{2}u &= 0,\\
u(x,y,0) &= \phi(x,y),
\end{aligned}
\right.
\end{equation}
where $\beta\in\mathbb{R}\setminus\{0\}$ and $\gamma\in\{\pm1\}$.
The nonlocal operator $\partial_x^{-1}$ is defined on functions with zero
mean in the periodic variable via
\begin{equation}\label{antDer}
\widehat{(\partial_x^{-1}f)}(k,\eta)
= \frac{1}{ik}\,\widehat{f}(k,\eta),
\qquad (k,\eta)\in(\mathbb{Z}\setminus\{0\})\times\mathbb{R},
\end{equation}
and throughout the paper, we impose the constraint
\begin{equation}\label{eq:mean-zero-prelim}
\int_{\mathbb{T}} u(x,y,t)\,dx = 0,
\qquad \text{for a.e. }(y,t)\in\mathbb{R}\times\mathbb{R}_+,
\end{equation}
which ensures that $\partial_x^{-1}$ is well defined via \eqref{antDer}, and, now on, will be said $x$-mean-zero condition.

\subsection{Background and motivation} The Kadomtsev-Petviashvili (KP) equation was originally introduced to describe the transverse stability of KdV solitary waves \cite{KP1970}. In its classical third-order form, it has been extensively analyzed in
various geometries, including $\mathbb{R}^2$, mixed domains such as $\mathbb{R}\times\mathbb{T}$, and fully periodic settings; see, e.g., \cite{Molinet2011,Tzvetkov2008,IoKe2007}.

Higher-order KP-type models arise naturally when refining long-wave asymptotics beyond the classical regime. The fifth-order KP equation \eqref{aKPe2D} incorporates higher-order longitudinal dispersion through $\partial_x^5$, while retaining the characteristic transverse coupling $\partial_x^{-1}\partial_y^2$. Such equations appear in water wave theory, plasma physics, and nonlinear lattice dynamics; see \cite{Craig2005,Erbay2022}. Well-posedness results for fifth-order KP equations on $\mathbb{R}^2$ have been obtained in various anisotropic Sobolev frameworks; see \cite{LiXiao2008,LiYanZhang2017,Boukarou2021}.

\subsection{Analytical framework and stabilization mechanism} The main objective of this work is to establish a unique continuation property, which plays a central role in the derivation of the control results. More precisely, we aim to design a stabilization mechanism for \eqref{aKPe2D} based on a localized high-order feedback acting in the periodic direction. 

Motivated by the one-dimensional theory for higher-order KdV equations, we introduce the following feedback operator
$$
F(u):=-G D_x^{5} G u,
$$
where $G$ is a localized averaging operator in the $x$-variable, and $D_x^r$ denotes the homogeneous Fourier multiplier with symbol $|k|^r$ for $k\neq0$.  A key feature of this feedback is the dissipation mechanism it induces. At the linear level, one obtains the energy identity
\[
\frac{1}{2}\|v(T)\|_{L^2}^2
+\int_0^T \|D_x^{5/2}(G v)\|_{L^2}^2\,dt
=\frac{1}{2}\|v_0\|_{L^2}^2,
\]
which reveals a localized gain of $5/2$ derivatives in the control region. This smoothing effect can be propagated to the whole domain through a propagation-of-regularity argument, yielding a global $5/2$-order gain in appropriate functional spaces.

Fix \(g\in C^\infty(\mathbb{T})\), \(g\ge0\), \(g\not\equiv 0\), normalized by
$$
	\int_{\mathbb{T}} g(x)\,dx = 1,
$$
and set
\[
\omega:=\{x\in\mathbb{T}:\ g(x)>0\}.
\]
We define a bounded self-adjoint operator \(G\) acting only in \(x\) (fiberwise in \(y\)) by
\begin{equation}\label{eq:G-def-prelim}
	(Gf)(x,y)
	:= g(x)\left(f(x,y) - \int_{\mathbb{T}} g(\tilde x)\,f(\tilde x,y)\,d\tilde x\right).
\end{equation}
The closed-loop term \(G D_x^5 G u\) is therefore supported in \(\omega\) and preserves the \(x\)-mean-zero structure.

The nonlinear feedback system we study is
\begin{equation}\label{eq:nonlin-closed-loop}
\begin{cases}
	\partial_t u
	+ \beta\,\partial_x^5 u
	+ u\,\partial_x u
	+ \gamma\,\partial_x^{-1}\partial_y^{2}u
	+ G D_x^{5}G u
	= 0,\\
u(0)=u_0.
\end{cases}
\end{equation}
To capture this structure, we work in the KP-adapted Sobolev scale
$$
X_s(\Omega)
:= \left\{u\in H^s(\Omega):\ \partial_x^{-1}u\in H^s(\Omega)\right\},
$$
endowed with the natural norm
\[
\|u\|_{X_s}^2 = \|u\|_{H^s}^2 + \|\partial_x^{-1}u\|_{H^s}^2,
\]
and its mean-zero subspace $X_{s,0}(\Omega)$. Within this framework, we first establish a unique continuation property for the linearized KP5 equation with localized damping. This result, as mentioned before, plays a central role, as it leads to an observability inequality and, consequently, to exponential decay for the associated linear semigroup.

\subsection{Main results} Our first main result is a unique continuation principle for the KP5 flow, which ensures that solutions vanishing on the control region must vanish identically. The result can be read as follows. 

\begin{theorem}\label{Th:UCP-5KP}
	Let $T>0$, $c\in L^2(0,T)$, and $v(x,t)\in L^2\!\left((0,T);L^2(\mathbb{T}\times\mathbb{R})\right)$ be a distributional solution of
	\begin{equation}\label{eq:UCP-assumption}
		\left\{
		\begin{aligned}
			\partial_t v
			+\beta\,\partial_x^5 v
			+\gamma\,\partial_x^{-1}\partial_y^2 v
			+\varepsilon D_x^5 v &=0
			&& \text{in }\mathbb{T}\times\mathbb{R}\times(0,T),\\
			v(x,y,t)&=c(t)
			&& \text{for a.e. }(x,y,t)\in\omega\times\mathbb{R}\times(0,T),
		\end{aligned}
		\right.
	\end{equation}
	satisfying the $x$-mean-zero condition \eqref{eq:mean-zero-prelim}.
	Then $v\equiv 0$ a.e.\ in $\mathbb{T}\times\mathbb{R}\times(0,T)$.
\end{theorem}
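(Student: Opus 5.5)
The plan is to reduce the statement to a family of one-dimensional problems indexed by the transverse frequency $\eta$, and then to a uniqueness property for exponential sums in $t$. I assume $\varepsilon>0$, as in the damped setting of the paper.

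\textbf{Step 1: $c\equiv 0$.} For a.e.\ $t$, the function $y\mapsto v(x,y,t)$ lies in $L^2(\mathbb{R})$ for a.e.\ $x$. On $\omega$ it equals the constant $c(t)$. A nonzero constant is not square integrable on $\mathbb{R}$, so $c(t)=0$ for a.e.\ $t$, and $v=0$ on $\omega\times\mathbb{R}\times(0,T)$.

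\textbf{Step 2: Fourier representation.} I take the Fourier transform in $y$ and the Fourier series in $x$; by the $x$-mean-zero condition only $k\neq 0$ occurs. On the Fourier side the equation becomes, in the distributional sense in $t$ for a.e.\ $\eta$,
\[
\partial_t\widehat v(k,\eta,t)+\mu_k(\eta)\,\widehat v(k,\eta,t)=0,\qquad
\mu_k(\eta)=\varepsilon|k|^5+i\Big(\beta k^5+\gamma\frac{\eta^2}{k}\Big).
\]
Hence $\widehat v(k,\eta,t)=a_k(\eta)e^{-\mu_k(\eta)t}$ with $\sum_k|a_k(\eta)|^2<\infty$ for a.e.\ $\eta$. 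Since $v$ vanishes on $\omega\times\mathbb{R}\times(0,T)$, Fubini gives, for a.e.\ $\eta$,
\[
w_\eta(x,t):=\sum_{k\neq0}a_k(\eta)\,e^{-\mu_k(\eta)t}e^{ikx}=0\quad\text{on }\omega\times(0,T).
\]
Because $\operatorname{Re}\mu_k=\varepsilon|k|^5$, the series and all its $t$-derivatives converge uniformly for $\operatorname{Re}t\ge\delta>0$. So $t\mapsto w_\eta(\cdot,t)\in L^2(\mathbb{T})$ extends holomorphically to the right half-plane. It vanishes on $\omega$ for $t\in(0,T)$, hence for all $t>0$ by analytic continuation, after pairing with test functions supported in $\omega$.

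\textbf{Step 3: uniqueness of the exponential sum.} I fix a.e.\ $\eta$ and argue by induction on $n=|k|$ that $a_{\pm n}(\eta)=0$, grouping frequencies by their common real part $\varepsilon n^5$. Assume $a_{\pm m}=0$ for $m<n$. Multiply $w_\eta$ by $e^{\varepsilon n^5t}$ and let $t\to+\infty$. The tail is $O(e^{-\varepsilon((n+1)^5-n^5)t})$ in $L^2(\mathbb{T})$, so on $\omega$
\[
a_n e^{-i\theta t}e^{inx}+a_{-n}e^{i\theta t}e^{-inx}\longrightarrow 0,\qquad \theta=\beta n^5+\gamma\eta^2/n,
\]
where I used that the imaginary part of $\mu_k$ is odd in $k$.
\begin{itemize}
\item If $\theta\neq0$: multiply by $e^{i\theta t}$ and average over $t\in[R,2R]$. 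The cross term averages to $O(1/R)$, which gives $a_ne^{inx}=0$ on $\omega$, so $a_n=0$. The same argument gives $a_{-n}=0$.
\item If $\theta=0$, which happens only for $\eta$ in a null set: the limit gives $a_ne^{inx}+a_{-n}e^{-inx}=0$ on the open set $\omega$. A trigonometric polynomial vanishing on an open set vanishes identically, so $a_n=a_{-n}=0$.
\end{itemize}
This closes the induction. Then $\widehat v\equiv0$ for a.e.\ $\eta$, and Plancherel gives $v\equiv0$.

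\textbf{Main obstacle.} I expect the delicate point to be the rigorous passage from an $L^2$ distributional solution to the mode-by-mode representation and the holomorphic extension. This requires checking that $\partial_x^{-1}\partial_y^2v$ makes sense as a distribution under the mean-zero condition, and handling exceptional null sets of $\eta$ uniformly. I would handle it by testing the equation against $\varphi(t)e^{-ikx}\psi(y)$ and using Fubini to get the ODE for a.e.\ $\eta$.

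If the case $\varepsilon=0$ also had to be covered, the real parts would no longer separate frequencies. In that case I would instead use the one-sided Fourier vanishing argument of Linares--Rosier, together with the frequency grouping via the gaps of $\beta k^5+\gamma\eta^2/k$ and an Ingham-type inequality on $(0,T)$.
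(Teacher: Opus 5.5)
You prove the statement only for $\varepsilon>0$ and leave $\varepsilon=0$ to a one-sentence remark. That is a genuine gap.

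\textbf{Why $\varepsilon=0$ is needed.} The theorem is meant for $0\le\varepsilon<1$: this is the setting of Section~\ref{sec:UCP}, and the paper's proof treats $\varepsilon=0$ as a separate step. It is also the case used downstream. In the compactness--uniqueness proof of the observability inequality (Proposition~\ref{prop:exp-decay-5KP}) one has $\varepsilon_n\to\varepsilon_\ast\in[0,1]$. The case $\varepsilon_\ast=0$ is what makes the constants uniform as $\varepsilon\to0$, i.e.\ what concerns the actual KP5 flow.

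\textbf{Why your mechanism fails there.} Both the continuation of $t\mapsto\langle w_\eta(t),\varphi\rangle$ from $(0,T)$ to $(0,\infty)$ and the separation of modes as $t\to+\infty$ rest on $\mathrm{Re}\,\mu_k=\varepsilon|k|^5$. Without damping, $F_\varphi(t)=\sum_{k\neq0}c_k\widehat\varphi(-k)e^{-i\omega_k t}$, with $\omega_k=\beta k^5+\gamma\eta^2/k$, is merely an absolutely convergent almost periodic sum known only on $(0,T)$. All modes have constant modulus, so nothing can be read off at large times.

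\textbf{What the missing case must contain.} This is the paper's route. Fourier transform in $y$ reduces, for a.e.\ $\eta$, to the one-dimensional problem of Lemma~\ref{lem:ucp-1d-eta}, with $\widehat u_k(t)=c_ke^{-i\omega_kt}$ and $(c_k)\in\ell^2$. One groups indices with equal frequency into finite sets $\Lambda_\alpha$. One then shows $\sum_{k\in\Lambda_\alpha}c_k\widehat\varphi(-k)=0$ for every $\alpha$ and every $\varphi\in L^2(\mathbb{T})$ supported in $\omega$. Finally, the trigonometric polynomial $f_\alpha=\sum_{k\in\Lambda_\alpha}c_ke^{ikx}$ vanishes on the open set $\omega$, hence identically, so every $c_k=0$.

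The middle step carries the real content. Since $F_\varphi$ is known only on $(0,T)$, it needs a uniqueness theorem for nonharmonic exponential sums on a bounded interval. One is available: the distinct values $\alpha$ form a uniformly discrete set whose gaps tend to infinity, so an Ingham/Beurling-type inequality holds on $(0,T)$ for every $T>0$. The paper phrases this step via uniqueness of Bohr--Fourier coefficients, which by itself would require vanishing on all of $\mathbb{R}$. The Ingham-type tool you mention is the right one, but you name it without carrying out the case.

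\textbf{Your $\varepsilon>0$ argument.} It is correct but takes a different route from the paper. The paper fixes $t$ and uses analyticity in $x$: the factor $e^{-\varepsilon|k|^5(t-t_0)}$ makes $x\mapsto v(x,\cdot,t)$ an entire $L^2(\mathbb{R}_y)$-valued function vanishing on $\omega$. The identity theorem then gives $v(t)\equiv0$ at once, with no continuation in time and no induction on $|k|$. Two small repairs are needed in yours.
\begin{itemize}
\item Since $|e^{-\mu_kt}|=e^{-\varepsilon|k|^5\mathrm{Re}\,t+(\beta k^5+\gamma\eta^2/k)\mathrm{Im}\,t}$, the holomorphic extension holds only in a sector $\{|\mathrm{Im}\,t|<c\,\mathrm{Re}\,t\}$ with $c<\varepsilon/|\beta|$, not in the half-plane. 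This still gives real-analyticity on $(0,\infty)$, which is all you use.
\item $v\in L^2(0,T;L^2)$ only yields $\sum_k|a_k(\eta)|^2|k|^{-5}<\infty$, not $(a_k)\in\ell^2$. Re-base at some $t_0\in(0,T)$ with $\widehat v(\cdot,\eta,t_0)\in\ell^2$, as the paper does.
\end{itemize}
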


Building on this property and on the smoothing effect induced by the feedback, we prove that the nonlinear closed-loop system \eqref{eq:nonlin-closed-loop} is globally well-posed for small initial data in $X_{s,0}(\Omega)$, $s>2$, and that its solutions decay exponentially in time. 

\begin{theorem}\label{thm:stab5KP}
	Let \(s>2\). There exist constants \(\rho,\lambda,C>0\) such that for any\(u_0\in X_{s,0}(\Omega)\) with $\|u_0\|_{X_s}\le \rho$,  the problem \eqref{eq:nonlin-closed-loop} admits a unique global solution
	\[
	u\in C([0,\infty);X_{s,0}(\Omega))
	\cap L^2_{\mathrm{loc}}([0,\infty);X_{s+5/2,0}(\Omega)).
	\]
	Moreover,
	\begin{equation}\label{eq:nonlin-exp-decay-kp5}
		\|u(t)\|_{X_s}
		\le C e^{-\lambda t}\|u_0\|_{X_s},
		\qquad\forall\,t\ge0,
	\end{equation}
	or equivalently,  \(u\) decays exponentially in \(X_s\).
\end{theorem}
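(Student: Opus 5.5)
We argue in three stages: linear exponential stability in $L^2$ (and then in $X_s$), a local smoothing estimate for the linear closed-loop flow, and a fixed-point argument on $[0,T]$ iterated in time.

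\textbf{Linear stage.} Let $S(t)$ be the semigroup generated by $A=-\beta\partial_x^5-\gamma\partial_x^{-1}\partial_y^2-GD_x^5G$ on the mean-zero space $X_{0,0}$. The operator $-\beta\partial_x^5-\gamma\partial_x^{-1}\partial_y^2$ is skew-adjoint, and $GD_x^5G$ is nonnegative self-adjoint. Hence $A$ is m-dissipative, and the energy identity from the introduction holds.

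Exponential decay $\|S(t)v_0\|_{L^2}\le Ce^{-\lambda t}\|v_0\|_{L^2}$ reduces, by the semigroup property, to the observability inequality
\[
\|v_0\|_{L^2}^2\le C\int_0^T\|D_x^{5/2}(GS(t)v_0)\|_{L^2}^2\,dt .
\]
We prove it by contradiction. Take a normalized sequence $v_{0,n}$ whose right-hand side tends to $0$.

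\begin{itemize}
\item The propagation-of-regularity / $5/2$-smoothing estimate gives a uniform bound on $S(t)v_{0,n}$ in $L^2(0,T;H^{5/2}_x L^2_y)$.
\item Since $y$ ranges over $\mathbb{R}$, compactness is not automatic. We treat it fiberwise after a Fourier transform in $y$: for each $\eta$ the symbol $k^5$-dispersion plus the term $\gamma\eta^2/k$ gives a 1D problem on $\mathbb{T}$, and Aubin--Lions applies on each fiber. Uniformity is then recovered by splitting high and low $|\eta|$ and using the frequency grouping the abstract mentions.
\item Pass to a weak limit $v$. It satisfies $D_x^{5/2}Gv=0$, so $Gv=0$. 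This means $v(\cdot,y,t)$ equals its $g$-weighted mean $c(y,t)$ on $\omega$. It solves the undamped linear KP5 equation.
\item Applying Theorem \ref{Th:UCP-5KP} (with $\varepsilon=0$, after first reducing the $y$-dependent constant fiberwise) gives $v=0$.
\item The strong convergence provided by the smoothing then contradicts the normalization.
\end{itemize}

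Decay in $X_s$ follows because $S(t)$ commutes with $\partial_x^{-1}$ and $\partial_y$. For $x$-derivatives it follows from the commutator $[D_x^s,GD_x^5G]$, which is of order $s+4$ and is absorbed by the $5/2$-smoothing via interpolation and a Gronwall argument. For noninteger $s$ we use interpolation.

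\textbf{Nonlinear stage.} On the space $Y_T=C([0,T];X_{s,0})\cap L^2(0,T;X_{s+5/2,0})$, consider
\[
\Gamma u=S(t)u_0-\int_0^t S(t-\tau)\,\big(u\partial_xu\big)(\tau)\,d\tau .
\]
The linear estimate gives
\[
\Big\|\int_0^tS(t-\tau)f\Big\|_{Y_T}\lesssim\|f\|_{L^1(0,T;X_s)}+\|f\|_{L^2(0,T;X_{s-5/2})}.
\]
We bound $u\partial_xu=\tfrac12\partial_x(u^2)$ in $L^2(0,T;X_{s-3/2})\subset L^2(X_{s-5/2})$ using the algebra property of $H^s$ for $s>2$ (recall $\Omega$ is two-dimensional), together with $\partial_x^{-1}(u\partial_xu)=u^2/2$ minus its mean. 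This yields
\[
\|\Gamma u\|_{Y_T}\le C_T\|u_0\|_{X_s}+C_T\|u\|_{Y_T}^2 .
\]
So $\Gamma$ is a contraction on a small ball, which gives a unique local solution.

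Next choose $T$ with $Ce^{-\lambda T}\le 1/4$. For small data we then get
\[
\|u(T)\|_{X_s}\le \tfrac14\|u_0\|+C\|u_0\|^2\le \tfrac12\|u_0\| .
\]
Iterating on $[kT,(k+1)T]$ yields global existence and the decay estimate \eqref{eq:nonlin-exp-decay-kp5}.

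\textbf{Main obstacle.} The hardest step is the compactness in the contradiction argument. The unbounded $y$-direction and the nonlocal term $\partial_x^{-1}\partial_y^2$ prevent Rellich. I would first try to exploit that the damping and dissipation are uniform in $\eta$, so that high $|\eta|$ cannot concentrate the energy outside $\omega$. If that fails, I would avoid compactness altogether and derive observability directly from a quantitative fiberwise unique continuation.
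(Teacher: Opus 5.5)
There is a genuine gap, and it is the one you flag yourself: the proposal never proves the observability inequality with a constant that is uniform over the unbounded transverse direction. Since $G$ acts only in $x$, the Fourier transform in $y$ decouples the closed-loop flow exactly into fiber semigroups $\mathcal{S}_\eta(t)$ on the mean-zero subspace of $L^2(\mathbb{T})$, generated by $A_\eta=-\beta\partial_x^5+\gamma\eta^2\partial_x^{-1}-GD_x^5G$. What has to be proved is therefore exactly $\sup_{\eta\in\mathbb{R}}\|\mathcal{S}_\eta(t)\|\le Ce^{-\lambda t}$.

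Compactness--uniqueness does work for $|\eta|\le M$. Take $\eta_n\to\eta_\ast$, use Aubin--Lions on $\mathbb{T}$, and apply Lemma~\ref{lem:ucp-1d-eta} to $\partial_x$ of the limit; this also removes the $y$-dependent constant left by $Gv=0$. If instead $|\eta_n|\to\infty$, the term $\gamma\eta_n^2\partial_x^{-1}v^n$ is unbounded, $\partial_t v^n$ is bounded in no fixed space, and there is no limit equation. The frequency grouping of Lemma~\ref{lem:ucp-1d-eta} is qualitative at fixed $\eta$ and yields no constant. Moreover, with $\omega_k(\eta)=\beta k^5+\gamma\eta^2/k$, one has $\omega_j(\eta)=\omega_k(\eta)$ for infinitely many pairs $j\neq k$ at suitable $\eta$, so no uniform gap condition is available either. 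So ``splitting high and low $|\eta|$'' leaves exactly the hard part open.

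You cannot borrow this step from the paper. Proposition~\ref{prop:exp-decay-5KP} applies Aubin--Lions--Simon to $X_{5/2}\hookrightarrow\hookrightarrow X_0$, which is not compact on $\mathbb{T}\times\mathbb{R}$ (take $\phi(x)\psi(y-n)$). Your fallback is the right direction. For instance, a bound $\sup_{\eta,\mu\in\mathbb{R}}\|(i\mu-A_\eta)^{-1}\|<\infty$ gives uniform decay by the quantitative Gearhart--Pr\"uss theorem for contraction semigroups. One can aim for it by combining three facts: $\|D_x^{5/2}Gf\|^2\le\|(i\mu-A_\eta)f\|\,\|f\|$; uniformly in $(\eta,\mu)$, only boundedly many modes $k$ are resonant with $\mu$ at the damping scale $\langle k\rangle^{5/2}$ (by convexity or monotonicity of $k\mapsto\omega_k(\eta)$ on each half-line); and a Tur\'an--Nazarov inequality for the resulting sparse trigonometric sum, which is then nearly constant on $\{g>\delta\}$.

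A second gap concerns the $5/2$-smoothing you take for granted in three places: the compactness step, the extension of decay to $X_s$, and the Duhamel estimate $\bigl\|\int_0^t\mathcal{S}(t-\tau)f\,d\tau\bigr\|_{Y_T}\lesssim\|f\|_{L^2(0,T;X_{s-5/2})}$ on which your nonlinear stage rests. This smoothing is not uniform in $\eta$ when $\beta\gamma>0$. With the sign convention of \eqref{eq:nonlin-closed-loop}, on the fiber $\eta$ the dispersive and transverse terms contribute $\frac{5\beta}{2}\int_{\mathbb{T}}|\partial_x^2w|^2\psi'\,dx-\frac{\gamma\eta^2}{2}\int_{\mathbb{T}}|\partial_x^{-1}w|^2\psi'\,dx$ to $\frac12\frac{d}{dt}\int_{\mathbb{T}}|w|^2\psi\,dx$. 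The second term is not bounded by $C\|w\|_{L^2}^2$ uniformly in $\eta$, contrary to Remark~\ref{rem:transverse-comm}. For a packet at frequency $k$ the sum is $\frac12\,\partial_k\omega_k(\eta)\int_{\mathbb{T}}|w|^2\psi'\,dx$, and $\partial_k\omega_k(\eta)=5\beta k^4-\gamma\eta^2/k^2$ vanishes at $|k|\approx|\eta|^{1/3}$ if $\beta\gamma>0$.

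A packet at such a frequency, supported away from $\operatorname{supp}g$, stays there for times $\gtrsim|k|^{-3-\epsilon}$ rather than $O(|k|^{-5})$; testing the solution against the free evolution of the packet shows this. Hence $\int_0^T\|D_x^{5/2}\mathcal{S}_\eta(t)v_0\|^2\,dt\le C\|v_0\|^2$ cannot hold with $C$ independent of $\eta$. By duality (the adjoint fibers have the same stationary points), the Duhamel estimate fails too. In that regime the derivative loss in $u\partial_xu$ must be handled differently, e.g.\ by $H^s$ energy estimates of Kato--Ponce type; for $\beta\gamma<0$ the transverse term has the favorable sign. Independently of signs, any gain is in $x$ only: there is no dissipation in $y$ and $\partial_y$ commutes with $\mathcal{S}(t)$, so $Y_T$ must be anisotropic.

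Two minor points. First, $\mathcal{S}(t)$ does not commute with $\partial_x^{-1}$, because $[\partial_x^{-1},GD_x^5G]\neq0$. This is harmless, since $\|\partial_x^{-1}f\|_{H^s}\le\|f\|_{H^s}$ for $x$-mean-zero $f$ on $\mathbb{T}\times\mathbb{R}$, so $X_{s,0}$ is just mean-zero $H^s$ with an equivalent norm. Second, once the linear estimates are available, your local contraction followed by the halving iteration is a perfectly good substitute for the paper's single contraction in the weighted norm $\sup_n e^{\lambda n}\vertiii{u}_n$ of Theorem~\ref{thm:gwp-kp5-X}.
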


Finally, we derive a local exact controllability result for the controlled system where the control is supported in the same region as the feedback. 

\begin{theorem}\label{thm:control5KP}
	Let \(T>0\) and \(s>2\). There exists \(\delta=\delta(s,T)>0\) such that, for any
	\(u_0,u_1\in X_{s,0}(\Omega)\) with $\|u_0\|_{X_s}+\|u_1\|_{X_s}\le \delta$, there exists $q\in L^2\left(0,T;X_{s,0}(\Omega)\right)$, such that the controlled system
	\begin{equation}\label{control5KP_q}
		\left\{
		\begin{aligned}
			\partial_t u
			+ \beta\,\partial_x^5 u
			+ \gamma\,\partial_x^{-1}\partial_y^2 u
			+ u\,\partial_x u
			+ G D_x^{5}G u
			&= (D_x^{5/2}G)^\ast q,
			&& (x,y,t)\in\Omega\times(0,T),\\
			u(0) &= u_0,\quad u(T)=u_1,
		\end{aligned}
		\right.
	\end{equation}
	admits a solution
	\[
	u\in C([0,T];X_{s,0}(\Omega))
	\cap L^2\left(0,T;X_{s+5/2,0}(\Omega)\right).
	\]
	Moreover,
	\[
	\|q\|_{L^2(0,T;X_s)}
	\le C(s,T)\left(\|u_0\|_{X_s}+\|u_1\|_{X_s}\right),
	\]
	for some constant \(C(s,T)>0\).
\end{theorem}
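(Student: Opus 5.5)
The proof of Theorem~\ref{thm:control5KP} proceeds in two stages: exact controllability of the linearized closed-loop system via the Hilbert Uniqueness Method, followed by a contraction argument that absorbs the nonlinearity $u\,\partial_x u$.

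\textbf{Linear problem.} Let $A u := -\beta\partial_x^5 u - \gamma\partial_x^{-1}\partial_y^2 u - G D_x^5 G u$. Its skew-adjoint part generates a unitary group on $X_{s,0}$, and the feedback part is dissipative, so $A$ generates a contraction semigroup $S(t)$ on $X_{s,0}$. The adjoint semigroup has generator $A^\ast$, which differs from $A$ only in the sign of the skew part. The linear control map is
\[
L_T q := \int_0^T S(T-\tau)\,(D_x^{5/2}G)^\ast q(\tau)\,d\tau .
\]
Its adjoint is $L_T^\ast\varphi_T = D_x^{5/2}G\,S^\ast(T-\cdot)\varphi_T$. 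Surjectivity of $L_T$ onto $X_{s,0}$ is therefore equivalent to the observability inequality
\[
\|\varphi_T\|_{X_s}^2\le C\int_0^T\|D_x^{5/2}G\,\varphi(t)\|_{X_s}^2\,dt
\]
for adjoint solutions $\varphi$.

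I would prove this inequality by compactness--uniqueness. Suppose it fails along a normalized sequence. The energy identity gives $\|\varphi_n(0)\|$ bounded. The propagation-of-regularity estimate gives a global $5/2$-derivative gain in $L^2(0,T;X_{s+5/2})$. Together with Aubin--Lions in $x$ and a localization in $y$, this yields strong convergence of a subsequence. In the limit, $D_x^{5/2}G\varphi=0$, which means $\varphi$ equals a function $c(t)$ on $\omega$. Theorem~\ref{Th:UCP-5KP} then forces $\varphi\equiv0$, contradicting the normalization.

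\textbf{Main obstacle.} I expect the hardest point to be compactness on the unbounded factor $\mathbb{R}_y$, since Rellich fails there. I would first try to avoid compactness altogether. Because the equation is translation invariant in $y$, one can take the Fourier transform in $y$ and reduce to a family of one-dimensional problems on $\mathbb{T}$ indexed by $\eta\in\mathbb{R}$. On each fiber the symbol is $i(\beta k^5-\gamma\eta^2/k)$, and the fiberwise observability constant must be made uniform in $\eta$. For large $|\eta|$, the plan is an Ingham-type gap argument combined with frequency grouping, the grouping being needed because the phases are nearly degenerate. For $\eta$ in compact sets, the fiberwise UCP from Theorem~\ref{Th:UCP-5KP} and the continuity of the constant in $\eta$ would give uniformity. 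Once observability holds, HUM gives the Gramian $\Lambda=L_TL_T^\ast$ as an invertible operator. Setting $q=L_T^\ast\Lambda^{-1}(u_1-S(T)u_0)$ gives $\|q\|_{L^2(0,T;X_s)}\le C(\|u_0\|_{X_s}+\|u_1\|_{X_s})$.

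\textbf{Nonlinear problem.} On
\[
Z_T=C([0,T];X_{s,0})\cap L^2(0,T;X_{s+5/2,0})
\]
define
\[
\Gamma(u)=S(t)u_0+L_t q(u)-\int_0^t S(t-\tau)(u\,\partial_x u)\,d\tau ,
\]
with
\[
q(u)=L_T^\ast\Lambda^{-1}\Big(u_1-S(T)u_0+\int_0^T S(T-\tau)(u\,\partial_x u)\,d\tau\Big).
\]
By construction $\Gamma(u)(T)=u_1$. The nonlinear term preserves the $x$-mean-zero condition, since $u\,\partial_x u=\tfrac12\partial_x(u^2)$.

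The bilinear estimate needed is
\[
\Big\|\int_0^t S(t-\tau)\,\partial_x(uv)\,d\tau\Big\|_{Z_T}\le C\|u\|_{Z_T}\|v\|_{Z_T}.
\]
It follows from the smoothing in $L^2X_{s+5/2}$, which absorbs one derivative, and from $H^{s}$ being an algebra for $s>2>\dim/2$. The $\partial_x^{-1}$ component is harmless because $\partial_x^{-1}\partial_x(uv)=uv$. Then $\Gamma$ is a contraction on a ball of radius $2C\delta$ for $\delta$ small, and its fixed point gives the solution $u$ and the control $q=q(u)$ with the stated linear bound.
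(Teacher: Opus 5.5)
\textbf{Verdict: the overall strategy is the paper's, but there is a genuine gap exactly where you depart from it.} Your architecture is the paper's: HUM for the closed-loop linearization with input $GD_x^{5/2}$, inversion of the Gramian, then a contraction in $Z^{s,T}$. Your nonlinear step is fine and simpler than the paper's: you write $u\,\partial_x u=\tfrac12\partial_x(u^2)$ and use the algebra property. One correction there: $\partial_x^{-1}\partial_x(uv)=uv-\Pi_0(uv)$, not $uv$.

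\textbf{The gap: uniform-in-$\eta$ observability for large $|\eta|$.} You depart from the paper only here, and the replacement does not close.
\begin{itemize}
\item An Ingham inequality with frequency grouping is a statement about the expansion $\sum_k c_k e^{-i\omega_k(\eta)t}e^{ikx}$, with $\omega_k(\eta)$ as in Lemma~\ref{lem:ucp-1d-eta}. That is, it concerns the conservative fiber dynamics with generator $A_0$.
\item What you must invert is $L_TL_T^\ast$. So you need observability for the closed-loop adjoint, whose generator contains the order-five term $GD_x^5G$ and is not diagonal in $e^{ikx}$.
\item The standard transfer from open-loop to closed-loop observability needs $D_x^{5/2}G$ to be an admissible observation for the conservative group. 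On $\mathbb{T}$ it is not: for $\varphi_0=e^{ikx}$,
\[
\int_0^T\|D_x^{5/2}G(e^{-i\omega_k t}e^{ikx})\|_{L^2}^2\,dt=T\|D_x^{5/2}Ge^{ikx}\|_{L^2}^2\sim T|k|^5 .
\]
\item Likewise the input--output map $f\mapsto G\int_0^t e^{(t-s)A_0}GD_x^{5/2}f(s)\,ds$ is unbounded on $L^2(0,T;L^2)$. Test with $f(s)=e^{-i\omega_k s}\chi e^{ikx}$, which resonates with mode $k$. So the Duhamel comparison between the two flows does not close.
\item You cannot fall back on propagation of regularity fiber by fiber either. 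The transverse commutator becomes $\eta^2[\partial_x^{-1},\psi]$, bounded on $L^2(\mathbb{T})$ but with norm growing like $\eta^2$. The propagation argument therefore gives no constants uniform in $\eta$.
\end{itemize}
The large-$|\eta|$ regime, which is exactly the non-compact direction you identified, is therefore left open. Closing it needs a closed-loop argument uniform in $\eta$, for instance a uniform resolvent bound for the fiber generators, not an open-loop gap condition.

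For comparison, the paper obtains \eqref{eq:observability-5KP-X0} by compactness--uniqueness, using Aubin--Lions--Simon on $X_{5/2}\hookrightarrow X_0$. Your objection that this embedding is not compact on $\mathbb{T}\times\mathbb{R}$ is well founded (translate a fixed profile in $y$). But your substitute does not yet replace that step.

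\textbf{Smaller points.}
\begin{itemize}
\item $D_x^{5/2}G\varphi=0$ only gives $G\varphi=0$, i.e.\ $\varphi=\langle\varphi\rangle_g(y,t)$ on $\omega$, or $\widehat\varphi=c_\eta(t)$ on $\omega$ fiberwise. It does not give $c(t)$. Neither Theorem~\ref{Th:UCP-5KP} (whose first step uses that $c$ is independent of $y$) nor Lemma~\ref{lem:ucp-1d-eta} (which requires vanishing on $\omega$) applies as stated. Pass to $\partial_x\varphi$, as the paper does: it solves the same limit equation, vanishes on $\omega$, and has zero mean.
\item With $L_T^\ast$ the $L^2$-adjoint, surjectivity of $L_T:L^2(0,T;X_{s,0})\to X_{s,0}$ is equivalent to observability in the dual ($X_{-s}$) norms, not in $X_s$. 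The paper instead runs HUM in $L^2$ and then lifts $\Lambda^{-1}$ to $X_{s,0}$.
\item $GD_x^5G$ is dissipative for the $L^2$ inner product but not for the $X_s$ one. So a contraction semigroup on $X_{s,0}$ does not come for free.
\end{itemize}
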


The result is obtained by combining the Hilbert Uniqueness Method, in the framework of Lions \cite{Lions1988}, with a perturbative argument. More precisely, we prove that any sufficiently small initial and target states in $X_{s,0}$ can be connected in finite time by means of an $L^2$-control, with a control cost that depends linearly on the size of the data.

\begin{remark} Let us give some remarks.

	\begin{itemize} 
	\item[a.] For the quadratic nonlinearity \(u\,\partial_x u\), the linear part of the analysis, in particular, the unique continuation property  (Theorem \ref{Th:UCP-5KP}), the observability inequality, and the exponential decay of the linear semigroup, extends to every \(s\ge 0\). By contrast, the restriction \(s>2\) enters only in the nonlinear theory, through the bilinear estimate of Lemma \ref{lem:bilinear-kp5-X}. More precisely, the argument used there relies on the product structure in the KP-adapted Sobolev scale at the level \(s-5/2\), which requires \(s-5/2>-1/2\), that is, \(s>2\).
		\item[b.] Accordingly, the threshold \(s>2\) is not imposed by the linear stabilization mechanism itself, but by the nonlinear estimates needed to close the fixed-point argument for the term \(u\,\partial_x u\). Whether the nonlinear stabilization and local exact controllability results remain valid for \(s\in[0,2]\) is an interesting open question, which would require bilinear estimates sharper than those provided by the tame product framework used here.
		\end{itemize}
\end{remark}

\subsection{Heuristic and novelties of the work}  In this work, our analysis combines semigroup methods, propagation of regularity, and nonlinear fixed-point arguments. We first construct the linear closed-loop semigroup via a regularization procedure and a Bona-Smith approximation argument, obtaining uniform smoothing estimates of order $5/2$ in the periodic direction. We then prove a unique continuation property, which yields an observability inequality and exponential decay of the linear flow. The nonlinear stabilization result follows from a contraction argument in suitable smoothing spaces, exploiting the bilinear structure of the nonlinearity. At the end, the controllability result is obtained as a perturbation of the linear HUM framework.

Finally, by emphasizing the main features of the present work relative to the existing literature on stabilization and control for dispersive equations.

\begin{itemize}
	\item \textbf{A cylindrical KP setting on \(\mathbb{T}\times\mathbb{R}\).}
	In contrast with the purely periodic framework considered in  \cite{FloreSmith2019} and the full-space settings treated in  \cite{LiXiao2008,LiYanZhang2017}, the present work is carried out on the mixed domain  \(\Omega=\mathbb{T}_x\times\mathbb{R}_y\). This geometry combines discrete frequencies in the longitudinal variable with continuous frequencies in the transverse direction, and naturally leads to the KP-adapted Sobolev scale \(X_s\), tailored to the anisotropic structure of the equation and of the control mechanism.
	\vspace{0.1cm}
	\item \textbf{A unique continuation argument adapted to the KP5 cylinder geometry.}
	A central ingredient of the analysis is the unique continuation property established in Theorem~\ref{Th:UCP-5KP}. For \(\varepsilon>0\), the argument exploits the analyticity generated by the fifth-order dissipative regularization in the periodic variable. For \(\varepsilon=0\), it combines Fourier analysis in the transverse variable with a reduction to a one-dimensional problem on \(\mathbb{T}\), together with a frequency grouping argument and the one-sided Fourier vanishing mechanism used in \cite{LR}.  To the best of our knowledge, this combination is new in the KP control setting.
		\vspace{0.1cm}
	\item \textbf{A fifth-order KP stabilization mechanism.}
	The dispersive operator $\beta\,\partial_x^5+\gamma\,\partial_x^{-1}\partial_y^2$ is coupled with a localized fifth-order feedback \(GD_x^5G\), producing a gain of  \(5/2\) derivatives on the control region and leading to a propagation-of-regularity mechanism in the KP-adapted scale. This extends to the fifth-order KP equation, in a genuinely two-dimensional anisotropic setting, the general strategy developed for one-dimensional dispersive models such as \cite{FloreSmith2019}, while requiring new estimates adapted to the mixed geometry and a careful treatment of operators involving  \(\partial_x^{-1}\).
		\vspace{0.1cm}
	\item \textbf{A direct global small-data theory for the closed-loop problem.}
	For the nonlinear closed-loop equation, we obtain global well-posedness and exponential stability for sufficiently small data in \(X_{s,0}\), \(s>2\). In particular, the global existence statement is formulated explicitly as an independent result within the KP-adapted functional framework, rather than being left only as an implicit consequence of the stabilization argument.
\end{itemize}

\subsection{Outline} The paper is organized as follows. In Section \ref{sec:prelim}, we introduce the functional setting and establish preliminary properties for a regularized version of the system \eqref{aKPe2D}. Section \ref{sec:WLT} is devoted to the local well-posedness of this perturbation and its convergence to the original equation via a Bona-Smith argument. In Section \ref{sec:NWP}, we develop the nonlinear well-posedness framework using KP-adapted slab spaces and bilinear estimates, leading to local and global results. The unique continuation property (Theorem \ref{Th:UCP-5KP}) is proved in Section \ref{sec:UCP}. Section \ref{sec:ucp-obs} establishes the stabilization and controllability results (Theorems \ref{thm:stab5KP} and \ref{thm:control5KP}). Finally, Section \ref{Secfinal} discusses extensions to more general nonlinearities and the challenges of the fully periodic setting $\mathbb{T}^2$.

\section{Preliminaries: spaces, multipliers and commutators}\label{sec:prelim}
In this section, let us introduce the relevant notations and several properties that help us to achieve the main results in this work.

\subsection{KP-adapted Sobolev spaces and the mean-zero constraint in $x$} For $s\in\mathbb{R}$ and $T>0$ we introduce the KP-adapted smoothing space
\begin{equation}\label{eq:ZsT-5KP}
	Z^{s,T}
	:= C([0,T];X_{s,0}(\Omega))
	\cap L^2(0,T;X_{s+5/2,0}(\Omega)),
\end{equation}
endowed with the norm
\begin{equation}\label{eq:Z-norm-5KP}
	\|v\|_{Z^{s,T}}
	:= \|v\|_{L^\infty(0,T;X_{s})}
	+ \|v\|_{L^2(0,T;X_{s+5/2})}.
\end{equation}

When only regularity in $x$ is needed (with $y$ as a parameter), we use the anisotropic space given by
$$
	\mathcal{H}_0^s(\Omega)
	:= \left\{u\in H^s(\mathbb{T}_x;L^2(\mathbb{R}_y)):
	\ \int_{\mathbb{T}}u(x,y)\,dx=0
	\ \text{for a.e.\ }y\right\},
$$
with norm
\[
\|u\|_{\mathcal{H}^s}^2
:= \int_{\mathbb{R}}\sum_{k\in\mathbb{Z}\setminus\{0\}}
(1+k^2)^s\,|\widehat u(k,\eta)|^2\,d\eta.
\]

\subsection{Fourier multipliers in the periodic direction} For $r\in\mathbb{R}$ we define the homogeneous Fourier multiplier $D_x^r$ on nonzero $x$-modes by
$$
	\widehat{(D_x^r u)}(k,\eta)
	= |k|^{r}\,\widehat u(k,\eta),
	\qquad k\in\mathbb{Z}\setminus\{0\},\ \eta\in\mathbb{R}.
$$
On $\mathcal{H}_0^s(\Omega)$ one has the norm equivalence
$$
	\|u\|_{\mathcal{H}^s}
	\cong \|\langle D_x\rangle^s u\|_{L^2(\Omega)}
	\cong \|D_x^{s}u\|_{L^2(\Omega)},
	\qquad u\in\mathcal{H}_0^s(\Omega),
$$
where $\langle D_x\rangle^s$ denotes the inhomogeneous multiplier with symbol $(1+k^2)^{s/2}$ on $k\neq0$. Here $\cong$ denotes equivalence up to constants depending only on $s$.



\subsection{The localization operator \texorpdfstring{$G$}{G}} Let $g\in C^\infty(\mathbb{T})$ be nonnegative, not identically zero, and
normalized by $\int_{\mathbb{T}} g(x)\,dx = 1$. For $f:\mathbb{T}\times\mathbb{R}\to\mathbb{C}$ consider $(Gf)(x,y)$ defined by \eqref{eq:G-def-prelim}. The operator $G$ acts only in the periodic variable $x$ and preserves the zero $x$-mean property. The first lemma gives properties of the localization operator. 

	\begin{lemma}\label{lem:G-properties} Let $s\in\mathbb{R}$. Then:
	\begin{itemize}
		\item[(i)] $G$ maps $X_{s,0}(\Omega)$ into itself and is bounded on
		$X_s(\Omega)$;
		\item[(ii)] $G$ is self-adjoint on $L^2(\Omega)$;
		\item[(iii)] for every $\psi\in C^\infty(\mathbb{T})$ and
		$h\in X_s(\Omega)$,
		\[
		G(\psi h) = \psi\,Gh + \tilde h,
		\]
		with $\|\tilde h\|_{X_s}\le C(s,g,\psi)\|h\|_{X_s}$;
		\item[(iv)] for every $s,r_1,r_2\in\mathbb{R}$, there exists
		$C=C(r_1,r_2,s,g)>0$ such that
		\[
		\|D_x^{r_1}[D_x^s,G]D_x^{r_2}f\|_{L^2(\Omega)}
		\le C\,\|f\|_{X_{r_1+r_2+s-1}(\Omega)}
		\]
		for all $f\in X_{r_1+r_2+s-1,0}(\Omega)$.
	\end{itemize}
\end{lemma}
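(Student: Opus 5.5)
We write $G = M_g - P$, where $M_g$ is multiplication by $g(x)$ and $Pf(x,y) := g(x)\int_{\mathbb{T}} g(\tilde x) f(\tilde x,y)\,d\tilde x$ is a rank-one operator in $x$, applied fiberwise in $y$. Both pieces commute with $y$-Fourier multipliers and with $\partial_y$. The whole argument is therefore a one-dimensional Fourier computation on $\mathbb{T}$, carried out uniformly in the transverse frequency $\eta$.

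\emph{Mean zero and (ii).} Integrating in $x$ gives
\[
\int_{\mathbb{T}} Gf\,dx = \int g f\,dx - \Big(\int g\Big)\Big(\int g f\Big) = 0,
\]
by the normalization $\int_{\mathbb{T}} g = 1$. So $Gf$ always has zero $x$-mean, which is what makes $\partial_x^{-1}Gf$ well defined. For (ii), $M_g$ is self-adjoint because $g$ is real. $P$ has the kernel $g(x)g(\tilde x)$, which is symmetric, so $P$ is self-adjoint too.

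\emph{(i).} For the $H^s$ part we use that $M_g$ is bounded on $H^s(\mathbb{T}\times\mathbb{R})$ for every real $s$ (by duality and interpolation for negative $s$). The range of $P$ lies in $g\cdot L^2_y$, and its coefficient satisfies
\[
\Big\|\int g f\,dx\Big\|_{H^{s}_y} \lesssim \|f\|_{H^s}.
\]
This holds because pairing against the smooth function $g$ is bounded from $H^{s}(\mathbb{T})$ in $x$ for any $s$, and the $\eta$-weights pass through. For the $\partial_x^{-1}$ part we write $f = \partial_x F$ with $F = \partial_x^{-1} f \in H^s$. Integrating by parts,
\[
\partial_x^{-1}(g\partial_x F) = gF - \partial_x^{-1}\big(g'F - \textstyle\int g' F\,dx\big) + c(y),
\]
where the constant $c(y)$ is fixed by the mean-zero normalization. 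Every term is then bounded by $\|F\|_{H^s} + \|f\|_{H^s}$, using the $H^s$ bound above. The one subtle point is that $\partial_x^{-1}$ is only legitimate on mean-zero functions. I will check the mean condition term by term, using the explicit rank-one correction subtracted in $P$, and that correction is what makes the identity consistent.

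\emph{(iii).} Expanding,
\[
G(\psi h) = \psi\, G h + \tilde h, \qquad \tilde h = g\Big(\psi\int g h - \int g\psi h\Big).
\]
The function $\tilde h$ is smooth in $x$, has the form $g\cdot(\text{a function of } y)$, and is controlled through the pairing bounds just proved. This yields $\|\tilde h\|_{X_s} \le C\|h\|_{X_s}$, where the $\partial_x^{-1}$ component is handled exactly as in (i).

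\emph{(iv).} This is the main step. We decompose $[D_x^s, G] = [D_x^s, M_g] - [D_x^s, P]$ and work mode by mode in $k$, at fixed $\eta$.

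For the $M_g$ part, write $g = \sum_m \hat g(m) e^{imx}$. The commutator acts by the matrix
\[
[D^s_x, M_g]_{k,j} = \hat g(k-j)\big(|k|^s - |j|^s\big).
\]
The mean-value estimate $\big||k|^s - |j|^s\big| \lesssim |k-j|\,\langle k-j\rangle^{|s-1|}\langle j\rangle^{s-1}$ combines with Peetre's inequality to trade the weights $|k|^{r_1}$ and $|j|^{r_2}$ for $\langle j\rangle^{r_1+r_2+s-1}\langle k-j\rangle^N$. The rapid decay of $\hat g$ then allows Schur's test, which gives an $L^2$ bound by $\|f\|_{H^{r_1+r_2+s-1}_x}$.

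The $P$ part is rank one in $x$. Both $D_x^s P D_x^{r_2}$ and $P D_x^{s+r_2}$ pair $f$ against the smooth functions $D_x^{r_2} g$ and $D_x^{s+r_2} g$, and produce multiples of the smooth functions $D_x^{r_1+s} g$ and $D_x^{r_1} g$. These terms are therefore bounded by $\|f\|_{H^{-N}_x}$ for any $N$, which is in particular at most $C\|f\|_{X_{r_1+r_2+s-1}}$.

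Two technical points remain. Since $k = 0$ is excluded, the homogeneous symbols are smooth on the support and no singularity arises. The only genuine work is the Schur/Peetre bookkeeping for general real exponents, and I expect that to be the main obstacle.
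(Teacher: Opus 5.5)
Your proposal is correct and follows essentially the paper's route. You use the same splitting of $G$ into multiplication by $g$ plus a rank-one smooth-kernel part, and the same explicit remainder $\tilde h$ in (iii). In (iv) you simply supply the Schur/Peetre proof of the ``standard commutator estimate'' that the paper only cites. Your integration by parts in (i) is the explicit form of the order $-2$ commutator $[\partial_x^{-1},M_g]$ the paper invokes. It could be skipped altogether, since $|k|^{-1}\le 1$ on nonzero modes gives $\|\partial_x^{-1}u\|_{H^s}\le\|u\|_{H^s}$ directly.

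One caveat, shared with the paper, concerns the norm in (iv). Both arguments really bound (iv) by the $x$-only norm $\|f\|_{\mathcal{H}^{r_1+r_2+s-1}}$. That norm is controlled by $\|f\|_{X_{r_1+r_2+s-1}}$ only when $r_1+r_2+s-1\ge 0$, or when $X_\sigma$ is read anisotropically, as the paper itself does in its bilinear lemma and in its bound for $E_s$. With the isotropic $H^\sigma(\Omega)$ and $\sigma<0$ (e.g.\ $r_1=r_2=0$, $s=\tfrac12$), the estimate fails in general. To see this, take $f=\phi(x)\chi(y)$ with $\widehat{\chi}$ concentrated at $|\eta|\sim N\to\infty$: the left side stays fixed while $\|f\|_{X_\sigma}\to 0$.
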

\begin{proof}

\emph{(i) Invariance and boundedness.}

From the definition
\[
(Gf)(x,y)=g(x)\left(f(x,y)-\int_{\mathbb{T}} g(\tilde x)f(\tilde x,y)\,d\tilde x\right)
\]
and $\dis\int_{\mathbb{T}} g\,dx=1$, we obtain
$\dis\int_{\mathbb{T}} (Gf)(x,y)\,dx=0$,
so $G$ maps $X_{s,0}(\Omega)$ into itself. Since $g$ depends only on $x$,
$G$ acts fiberwise in $y$ as multiplication by $g$ composed with subtraction
of the $g$\nobreakdash-average, which are bounded on $H^s(\mathbb{T})$.
Hence $G$ is bounded on $H^s(\Omega)$:
\begin{equation}\label{eq:G-Hs-bound}
	\|Gf\|_{H^s}\le C(s,g)\,\|f\|_{H^s}.
\end{equation}

To establish boundedness on $X_s(\Omega)$, it remains to control
$\|\partial_x^{-1}(Gu)\|_{H^s}$.  We decompose
\begin{equation}\label{eq:dxinv-G-commutator}
	\partial_x^{-1}(Gu)
	= G(\partial_x^{-1}u) + [\partial_x^{-1},G]\,u.
\end{equation}
The first term is estimated by \eqref{eq:G-Hs-bound}:
\[
\|G(\partial_x^{-1}u)\|_{H^s}
\le C\,\|\partial_x^{-1}u\|_{H^s}.
\]
For the commutator, we write $G=M_g(\mathrm{Id}-\Pi_g)$, where $M_g$
denotes multiplication by $g$ and
$(\Pi_g f)(y):=\dis\int_{\mathbb{T}}g(\tilde x)f(\tilde x,y)\,d\tilde x$.
Then
\[
[\partial_x^{-1},G]
= [\partial_x^{-1},M_g](\mathrm{Id}-\Pi_g)
- M_g\,[\partial_x^{-1},\Pi_g].
\]
By the standard pseudodifferential calculus on \(\mathbb{T}\), the commutator
\([\partial_x^{-1},M_g]\) is an operator of order \(-2\) in the \(x\)-variable.
In particular, for every \(s\in\mathbb{R}\),
$$
	\|[\partial_x^{-1},M_g]f\|_{H^s}
	\le C(s,g)\,\|f\|_{H^{s-2}}
	\le C(s,g)\,\|f\|_{H^{s}}.
$$
For the rank-one part, note that $\Pi_g f$ depends only on $y$, so
$\partial_x^{-1}(\Pi_g f)=0$ (it has no nonzero $x$\nobreakdash-mode).
On the other hand,
$\Pi_g(\partial_x^{-1}f)(y)=\dis\int_{\mathbb{T}}g(\tilde x)\,
\partial_x^{-1}f(\tilde x,y)\,d\tilde x$,
hence
\[
[\partial_x^{-1},\Pi_g]f
= -\Pi_g(\partial_x^{-1}f),
\]
and
$\|M_g\,\Pi_g(\partial_x^{-1}f)\|_{H^s}
\le C(s,g)\,\|\partial_x^{-1}f\|_{L^2}$.
Combining,
\begin{equation}\label{eq:comm-dxinv-G-bound}
	\|[\partial_x^{-1},G]u\|_{H^s}
	\le C(s,g)\left(\|u\|_{H^s}+\|\partial_x^{-1}u\|_{L^2}\right)
	\le C(s,g)\,\|u\|_{X_s}.
\end{equation}
Inserting \eqref{eq:G-Hs-bound} and \eqref{eq:comm-dxinv-G-bound}
into \eqref{eq:dxinv-G-commutator} yields
\[
\|\partial_x^{-1}(Gu)\|_{H^s}
\le C\left(\|\partial_x^{-1}u\|_{H^s}+\|u\|_{X_s}\right)
\le C\,\|u\|_{X_s},
\]
and therefore $\|Gu\|_{X_s}\le C\,\|u\|_{X_s}$.

\medskip
\noindent
\emph{(ii) Self-adjointness.}
For $f,h\in L^2(\Omega)$, a direct computation using Fubini gives
\[
(Gf,h)=\int g(x)(f-\langle f\rangle_g)\overline{h}\,dxdy
      =\int f\,\overline{g(x)(h-\langle h\rangle_g)}\,dxdy
      =(f,Gh),
\]
where $\langle f\rangle_g=\int g(\tilde x)f(\tilde x,y)\,d\tilde x$.

\medskip
\noindent
\emph{(iii) Multipliers.}
For $\psi\in C^\infty(\mathbb{T})$,
\[
G(\psi h)-\psi Gh
= g(x)\int_{\mathbb{T}} g(\tilde x)\left(\psi(x)-\psi(\tilde x)\right)
h(\tilde x,y)\,d\tilde x =: \tilde h\,.
\]
The kernel $K(x,\tilde x)=g(x)(\psi(x)-\psi(\tilde x))g(\tilde x)$
is smooth on $\mathbb{T}^2$, hence the integral operator $T_K$
defined by $\tilde h=T_K h$ is bounded on $H^s(\Omega)$ for every
$s\in\mathbb{R}$:
\[
\|\tilde h\|_{H^s}\le C(s,g,\psi)\,\|h\|_{H^s}.
\]
To obtain the bound in $X_s(\Omega)$, we estimate
$\partial_x^{-1}\tilde h$ by decomposing
\[
\partial_x^{-1}(T_K h)
= T_K(\partial_x^{-1}h) + [\partial_x^{-1},T_K]\,h.
\]
Since $T_K$ is an integral operator in $x$ with smooth kernel,
$\|T_K(\partial_x^{-1}h)\|_{H^s}\le C\,\|\partial_x^{-1}h\|_{H^s}$.
Moreover, $[\partial_x^{-1},T_K]$ is again an integral operator in $x$
whose kernel $K_1(x,\tilde x)$ is obtained from $K$ by commuting with
$\partial_x^{-1}$; because $K$ is smooth and $\partial_x^{-1}$ has
order $-1$, the resulting kernel $K_1$ is smooth, and hence
$\|[\partial_x^{-1},T_K]h\|_{H^s}\le C\,\|h\|_{H^s}$.
Combining,
\[
\|\tilde h\|_{X_s}
\le C(s,g,\psi)\,\|h\|_{X_s}.
\]

\noindent
\emph{(iv) Commutator estimate.}
For each fixed $y$, $G$ is the sum of multiplication by $g$ and a rank–one
operator with smooth kernel in $x$. Standard commutator estimates on
$\mathbb{T}$ imply that $[D_x^s,G]$ has order $s-1$, hence
\[
\|D_x^{r_1}[D_x^s,G]D_x^{r_2}f(\cdot,y)\|_{L_x^2}
\le C\|f(\cdot,y)\|_{H_x^{r_1+r_2+s-1}} .
\]
Integrating in $y$ and arguing as in (i) yields the estimate in
$X_{r_1+r_2+s-1}(\Omega)$.
\end{proof}

\subsection{Regularized linear problem} For $0<\varepsilon<1$ we consider the regularized linear problem
\begin{equation}\label{eq:linear-reg}
	\left\{
	\begin{aligned}
		\partial_t v
		+ \left(\varepsilon D_x^5 - \beta\,\partial_x^5\right)v
		+ \gamma\,\partial_x^{-1}\partial_y^2 v
		+ G D_x^5 G v
		&= F,
		&& (x,y,t)\in\Omega\times(0,T),\\
		v(x,y,0) &= v_0(x,y),
	\end{aligned}
	\right.
\end{equation}
with $v_0\in X_{s,0}(\Omega)$ and $F\in L^2(0,T;X_{s-5/2,0}(\Omega))$. The regularization term $\varepsilon D_x^5 v$ makes the principal part strictly parabolic in the $x$-direction, allowing the use of classical semigroup theory and energy methods. All estimates will be derived with constants independent of $0<\varepsilon<1$, which will later enable us to pass to the limit as $\varepsilon \to 0$.

For $s\in\mathbb{R}$ we write $w=D_x^s v$. Since $D_x^s$ acts only on the periodic variable $x$, it commutes with $\partial_y$ and $\partial_x^{-1}$. Applying $D_x^s$ to \eqref{eq:linear-reg} gives
\begin{equation}\label{eq:w-eq-5KP-prelim}
	\partial_t w
	- \beta\,\partial_x^5 w
	+ \gamma\,\partial_x^{-1}\partial_y^2 w
	+ G D_x^{5} G w
	+ \varepsilon D_x^{5} w
	+ E_s w
	= D_x^s F,
\end{equation}
where the commutator term $E_s$ is defined by
\begin{equation}\label{eq:Es-def}
	E_s
	:= G D_x^{5}\,[D_x^{s},G]\,D_x^{-s}
	+ [D_x^{s},G]\,D_x^{5}\,G\,D_x^{-s}.
\end{equation}

Now, obseve that lemma \ref{lem:G-properties}, item (iv), implies that $E_s$ is one order lower than $D_x^5$ in the $x$–variable, so that, for every $\alpha\in\mathbb{R}$ and $\sigma\in\mathbb{R}$, we get
\begin{equation}\label{eq:Es-order}
	\|D_x^\alpha E_s f\|_{L^2(\Omega)}
	\le C(\alpha,s,g)\,\|f\|_{\mathcal{H}^{\alpha+4}(\Omega)},
\end{equation}
wich yields that $E_s:\mathcal{H}^\sigma(\Omega)\to\mathcal{H}^{\sigma-1}(\Omega)$ continuously. In the smoothing estimate, we will treat $E_s$ as a lower-order perturbation.

The next result is a Kato-type weighted identity for the fifth-order operator, which is a key tool in the propagation-of-regularity argument.

\begin{lemma}\label{lem:weighted-5KP}  Let $v$ be a smooth real-valued solution of \eqref{eq:linear-reg} and set $w:=D_x^s v$. Then for
	every $\psi\in C^\infty(\mathbb{T})$, holds that
	\begin{equation}\label{eq:weighted-5KP}
	\begin{split}
		&\frac12\frac{d}{dt}\int_{\mathbb{R}}\int_{\mathbb{T}} w^2\,\psi\,dx\,dy
		+\frac{5\beta}{2}\int_{\mathbb{R}}\int_{\mathbb{T}} (\partial_x^2 w)^2\,\psi'\,dx\,dy
		+\int_{\mathbb{R}}\int_{\mathbb{T}} D_x^{5/2}(G w)\,D_x^{5/2}G(\psi w)\,dx\,dy\\
		&\quad
		+\varepsilon\left\{
		\int_{\mathbb{R}}\int_{\mathbb{T}} (D_x^{5/2} w)^2\,\psi\,dx\,dy
		+\int_{\mathbb{R}}\int_{\mathbb{T}} D_x^{5/2}w\,[D_x^{5/2},\psi]\,w\,dx\,dy
		\right\}
		+\int_{\mathbb{R}}\int_{\mathbb{T}} w\,(E_s w)\,\psi\,dx\,dy\\
		&=
		\frac{5\beta}{2}\int_{\mathbb{R}}\int_{\mathbb{T}} (\partial_x w)^2\,\psi^{(3)}\,dx\,dy
		-\frac{\beta}{2}\int_{\mathbb{R}}\int_{\mathbb{T}} w^2\,\psi^{(5)}\,dx\,dy
		+\int_{\mathbb{R}}\int_{\mathbb{T}} w\,D_x^sF\,\psi\,dx\,dy\\
		&\quad
		-\frac{\gamma}{2}\int_{\mathbb{R}}\int_{\mathbb{T}} w\,[\partial_x^{-1}\partial_y^2,\psi]\,w\,dx\,dy.
	\end{split}
	\end{equation}
\end{lemma}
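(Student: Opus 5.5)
The plan is to multiply the commutator equation \eqref{eq:w-eq-5KP-prelim} (with $w=D_x^s v$) by $\psi w$, integrate over $\Omega=\mathbb{T}\times\mathbb{R}$, and treat each term separately. Since $v$ is smooth and real-valued, and since $D_x^s$ and $\partial_x^{-1}$ are real Fourier multipliers, $w$ is real. All integrations by parts in $x$ are then free of boundary terms, because $\mathbb{T}$ is periodic. In $y$ we assume enough decay, as smoothness of $v$ together with $L^2$ integrability provides. The time derivative term gives $\frac12\frac{d}{dt}\int w^2\psi$ at once. The forcing term is kept as $\int w\,D_x^sF\,\psi$, and the commutator term $\int w(E_sw)\psi$ is kept on the left unchanged.

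\textbf{Dispersive term.} Here we need $-\beta\int \partial_x^5 w\, w\psi$, and we integrate by parts repeatedly as in Kato's classical computation for KdV-type equations. First, $\int \partial_x^5 w\,w\psi=-\int \partial_x^4 w\,\partial_x(w\psi)$. Then we move derivatives until only symmetric quadratic quantities remain: $\int (\partial_x^2w)^2\psi'$, $\int(\partial_xw)^2\psi'''$ and $\int w^2\psi^{(5)}$. We use the identities $\int f\,\partial_x f\,\phi=-\frac12\int f^2\phi'$ and the pattern $\int\partial_x^{j+1}w\,\partial_x^{j}w\,\phi=-\frac12\int(\partial_x^jw)^2\phi'$. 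Collecting terms, we expect
\[
\int \partial_x^5 w\,w\psi=\tfrac52\int(\partial_x^2w)^2\psi'-\tfrac52\int(\partial_xw)^2\psi'''+\tfrac12\int w^2\psi^{(5)}.
\]
Multiplying by $-\beta$ and matching the sign convention of \eqref{eq:w-eq-5KP-prelim} gives the three $\beta$-terms in \eqref{eq:weighted-5KP}. This is the bookkeeping step, and it is where we expect the main risk of error: the coefficients $5/2,5/2,1/2$ and the overall sign relative to $\beta$ must be checked carefully. The check is easiest in the test case $\psi$ linear locally, and by expanding $\partial_x(w\psi)$ with Leibniz at each step.

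\textbf{Dissipative terms.} For the feedback term we use the self-adjointness of $G$ (Lemma~\ref{lem:G-properties}(ii)) and of $D_x^{5/2}$, which is a real even symbol. With $D_x^5=D_x^{5/2}D_x^{5/2}$, this gives $\int GD_x^5Gw\,\psi w=\int D_x^{5/2}(Gw)\,D_x^{5/2}G(\psi w)$. For the $\varepsilon$-term we write $\int D_x^5w\,\psi w=\int D_x^{5/2}w\,D_x^{5/2}(\psi w)$. We then split $D_x^{5/2}(\psi w)=\psi D_x^{5/2}w+[D_x^{5/2},\psi]w$, which yields the two $\varepsilon$-terms.

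\textbf{Transverse term.} Set $A:=\partial_x^{-1}\partial_y^2$. Its symbol $-\eta^2/(ik)$ is purely imaginary and odd in $(k,\eta)$, so $A$ is skew-adjoint on real mean-zero functions. We write $\psi Aw=\frac12(A(\psi w)+\psi Aw)-\frac12[A,\psi]w$. The symmetric part contributes $\frac12\big(\int w A(\psi w)+\int \psi w Aw\big)$, and by skew-adjointness $\int wA(\psi w)=-\int Aw\,\psi w$, so this part vanishes. What remains is $-\frac{\gamma}{2}\int w[A,\psi]w$, moved to the right with the sign in \eqref{eq:weighted-5KP}. One point needs care: $\psi w$ need not have zero $x$-mean, so $A(\psi w)$ should be understood on nonzero modes, with $\partial_x^{-1}$ annihilating the zero mode. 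Pairing against the mean-zero $w$ makes the zero mode irrelevant.

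Summing all contributions yields \eqref{eq:weighted-5KP}.
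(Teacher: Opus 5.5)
Your route is the paper's own: multiply \eqref{eq:w-eq-5KP-prelim} by $\psi w$ and integrate, use self-adjointness of $G$ and $D_x^{5/2}$ for the feedback term, split off $[D_x^{5/2},\psi]$ in the $\varepsilon$-term, and use skew-adjointness of $\partial_x^{-1}\partial_y^2$. The problem is the one computation that carries the content of the lemma. The fifth-order Kato identity you wrote is off by an overall sign. The correct identity, which the paper uses, is
\[
\int_{\mathbb{T}}\partial_x^5 w\,w\,\psi\,dx=-\frac52\int_{\mathbb{T}}(\partial_x^2w)^2\psi'\,dx+\frac52\int_{\mathbb{T}}(\partial_xw)^2\psi'''\,dx-\frac12\int_{\mathbb{T}}w^2\psi^{(5)}\,dx .
\]
You appear to have extrapolated the third-order pattern $\int\partial_x^3w\,w\psi=\frac32\int(\partial_xw)^2\psi'-\frac12\int w^2\psi'''$, but the signs alternate with the order. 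The test you suggested settles it. For $\psi=x$ locally,
\[
\int x\,w\,\partial_x^5w=-\int w\,\partial_x^4w-\int x\,\partial_xw\,\partial_x^4w=-\int(\partial_x^2w)^2-\frac32\int(\partial_x^2w)^2=-\frac52\int(\partial_x^2w)^2 .
\]
With the correct identity, multiplying by $-\beta$ (the coefficient in \eqref{eq:w-eq-5KP-prelim}) gives exactly $+\frac{5\beta}{2}\int(\partial_x^2w)^2\psi'$ on the left and $\frac{5\beta}{2}\int(\partial_xw)^2\psi^{(3)}-\frac{\beta}{2}\int w^2\psi^{(5)}$ on the right. With your identity, the same step produces all three terms with the opposite sign. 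So your claim that it ``gives the three $\beta$-terms'' is false as written, and the derivation does not reach \eqref{eq:weighted-5KP}.

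On the transverse term your computation is correct, but you misreport its outcome. Skew-adjointness gives $\gamma\int(Aw)\psi w=-\frac{\gamma}{2}\int w[A,\psi]w$ on the left-hand side. Moved across, that is $+\frac{\gamma}{2}\int w[A,\psi]w$ on the right, not the $-\frac{\gamma}{2}$ printed in \eqref{eq:weighted-5KP}. The paper's proof uses the same identity $\int(Lw)\psi w=-\frac12\int w[L,\psi]w$ and then adopts the printed sign without comment, so the statement itself has this slip. The quadratic form does not vanish in general: $[A,\psi]$ is symmetric, not skew, and for example $\psi=\cos x$, $w=\phi(y)(\cos x+\sin 2x)$ gives a positive multiple of $\int(\phi')^2\,dy$. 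The slip is harmless downstream, since Remark~\ref{rem:transverse-comm} only uses $|\int w[A,\psi]w|\le C(\psi)\|w\|_{X_0}^2$. Still, you should record the term with the sign your computation actually produces rather than claim agreement. Your treatment of the feedback term, the $\varepsilon$-split, and the zero-mode issue for $A(\psi w)$ is fine and matches the paper.
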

\begin{proof}
Multiply \eqref{eq:w-eq-5KP-prelim} by $\psi(x)w$ and integrate over $\Omega=\mathbb{T}\times\mathbb{R}$. The time derivative gives
\[
\int_\Omega (\partial_t w)\psi w
= \frac12\frac{d}{dt}\int_\Omega w^2\psi .
\]
For the dispersive term, integrating by parts in $x$ and using periodicity yields the one-dimensional identity
$$
\int_{\mathbb{T}} (\partial_x^5 w)\psi w\,dx
= -\frac{5}{2}\int_{\mathbb{T}} (\partial_x^2 w)^2\psi'\,dx
+ \frac{5}{2}\int_{\mathbb{T}} (\partial_x w)^2\psi^{(3)}\,dx
- \frac{1}{2}\int_{\mathbb{T}} w^2\psi^{(5)}\,dx .
$$
Integrating in $y$ gives the corresponding contribution to \eqref{eq:weighted-5KP}. Now, for the feedback term, using that $G$ is self-adjoint in $L^2(\Omega)$ and
$D_x^5=(D_x^{5/2})^\ast D_x^{5/2}$ on the mean-zero subspace,
\[
\int_\Omega (G D_x^5 G w)\psi w
= \int_\Omega (D_x^{5/2}G w)\,D_x^{5/2}G(\psi w).
\]
Similarly, by self-adjointness of $D_x^{5/2}$,
\[
\varepsilon\int_\Omega (D_x^5 w)\psi w
= \varepsilon\int_\Omega (D_x^{5/2}w)
\left(\psi D_x^{5/2}w+[D_x^{5/2},\psi]w\right).
\]
The term involving $E_s$ remains as $\int_\Omega w(E_s w)\psi$. For the transverse operator $L:=\partial_x^{-1}\partial_y^2$, note that
$\partial_x^{-1}$ is skew-adjoint and $\partial_y^2$ is self-adjoint on the $x$–mean-zero subspace, hence $L^\ast=-L$. For real-valued $w$,
\[
\int_\Omega (Lw)\psi w
= -\frac12\int_\Omega w[L,\psi]w .
\]
Finally, the forcing term contributes $\int_\Omega D_x^sF\,\psi w$. Collecting all terms yields \eqref{eq:weighted-5KP}.
\end{proof}

\begin{remark}[Transverse commutator]\label{rem:transverse-comm}
	Since $\psi=\psi(x)$ is independent of $y$,
	\[
	[\partial_x^{-1}\partial_y^2,\psi]
	= [\partial_x^{-1},\psi]\,\partial_y^2.
	\]
	The operator $[\partial_x^{-1},\psi]$ is a pseudodifferential operator of order $-2$ in $x$ with
	smooth coefficients, so $[\partial_x^{-1}\partial_y^2,\psi]$ has overall order $0$ in $(x,y)$.
	Consequently, for smooth $w$,
	\begin{equation}\label{eq:transverse-comm-bound}
		\left|\int_{\mathbb{R}}\int_{\mathbb{T}} w\,[\partial_x^{-1}\partial_y^2,\psi]\,w\,dx\,dy\right|
		\le C(\psi)\,\|w\|_{X_0}^2,
	\end{equation}
	where $C(\psi)$ depends on finitely many derivatives of $\psi$. In particular, the sign in \eqref{eq:weighted-5KP} does not affect the order-of-magnitude bound
	\eqref{eq:transverse-comm-bound}, and this term will be treated as lower order in the
	propagation argument.
\end{remark}


The weighted identity allows us to propagate the regularity generated in the control region $\omega$ by the feedback $G D_x^5 G$. The argument proceeds in two stages: first in the anisotropic $x$–Sobolev scale $\mathcal{H}_0^s$, then in the KP–adapted scale $X_{s,0}$.

\begin{proposition}[Propagation of regularity]\label{prop:propagation}
	Let $s\in\mathbb{R}$, $T>0$, $v_0\in X_{s,0}(\Omega)$, and
	\[
	F\in L^2\left(0,T;X_{s-5/2,0}(\Omega)\right).
	\]
	For $0<\varepsilon<1$, let $v$ be the solution of \eqref{eq:linear-reg} on $[0,T]$ with data
	$(v_0,F)$. Then
	\begin{equation}\label{eq:propagation-est-again}
		\|v\|_{L^2(0,T;X_{s+5/2})}
		\le C(s,T)\left(\|v_0\|_{X_s}
		+ \|F\|_{L^2(0,T;X_{s-5/2})}\right),
	\end{equation}
	for some constant $C(s,T)>0$ nondecreasing in $T$ and independent of $\varepsilon\in(0,1)$.
	Equivalently,
	\[
	\|v\|_{Z^{s,T}}
	\le C(s,T)\left(\|v_0\|_{X_s}
	+ \|F\|_{L^2(0,T;X_{s-5/2})}\right).
	\]
\end{proposition}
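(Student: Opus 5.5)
We plan to prove the estimate at the level $s$ in two stages: first in the anisotropic $x$-scale $\mathcal{H}^s_0$, then in the KP-adapted scale $X_{s,0}$. Constants will be tracked so that they do not depend on $\varepsilon$.

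\textbf{Step 1: global energy bound.} Apply Lemma~\ref{lem:weighted-5KP} with $\psi\equiv 1$. All the $\psi$-derivative terms and the transverse commutator then vanish, which leaves
\[
\frac12\frac{d}{dt}\|w\|_{L^2}^2+\|D_x^{5/2}Gw\|_{L^2}^2+\varepsilon\|D_x^{5/2}w\|_{L^2}^2
=-\int_\Omega w\,E_sw+\int_\Omega w\,D_x^sF .
\]
By \eqref{eq:Es-order}, the operator $E_s$ has order $4$. We split it as $w\,E_sw=(D_x^{2}w)(D_x^{-2}E_sw)$, which gives the bound $C\|w\|_{\mathcal H^{2}}^2$. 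The forcing term is bounded by $\|w\|_{\mathcal H^{5/2}}\|D_x^{s}F\|_{\mathcal H^{-5/2}}$. We then aim for
\[
\sup_t\|v\|_{\mathcal H^s}^2+\int_0^T\|D_x^{5/2}G D_x^s v\|^2
\le \|v_0\|_{\mathcal H^s}^2+C\int_0^T\big(\|v\|_{\mathcal H^{s+2}}^2+\|v\|_{\mathcal H^{s+5/2}}\|F\|_{\mathcal H^{s-5/2}}\big)dt .
\]

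\textbf{Step 2: propagation off $\omega$.} This is the heart of the argument. Choose $\psi\in C^\infty(\mathbb T)$ whose derivative $\psi'$ has a fixed sign (the sign of $\beta$) on $\mathbb T\setminus\omega$, and which is nonconstant only in a controlled way. On the torus $\psi'$ cannot keep one sign everywhere, so $\psi'$ must change sign; we arrange that the change of sign happens inside $\{g>c\}\subset\omega$. Integrating \eqref{eq:weighted-5KP} in time, the term $\frac{5|\beta|}{2}\int\!\!\int(\partial_x^2w)^2|\psi'|$ controls $\|D_x^{2}v\|_{\mathcal H^s}^2$ away from $\omega$. The contribution from inside $\omega$ is handled as follows. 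Write $g\,\partial_x^2w=\partial_x^2(Gw)+{}$lower order, using Lemma~\ref{lem:G-properties}(iii),(iv). This is controlled by the localized $5/2$-gain already obtained in Step 1, plus a lower-order term. The remaining terms are also lower order in $x$:
\begin{itemize}
\item the cross term $\int D_x^{5/2}Gw\,D_x^{5/2}G(\psi w)=\int\psi|D_x^{5/2}Gw|^2+{}$(commutators of order $\le 4$) is handled with Lemma~\ref{lem:G-properties}(iii),(iv);
\item the $\varepsilon$-commutator $[D_x^{5/2},\psi]$ has order $3/2$ and is absorbed into $\varepsilon\|D_x^{5/2}w\|^2$, so no $\varepsilon$ enters the constants;
\item the $(\partial_xw)^2\psi^{(3)}$, $w^2\psi^{(5)}$ and $E_s$ terms have order at most $4$;
\item the transverse commutator is bounded by \eqref{eq:transverse-comm-bound}.
\end{itemize}
This yields a gain of $2$ derivatives:
\[
\|v\|_{L^2(0,T;\mathcal H^{s+2})}\le C(\|v_0\|_{\mathcal H^s}+\|F\|_{L^2\mathcal H^{s-5/2}}+\|v\|_{L^2\mathcal H^{s+3/2}}).
\]
The error term $\|v\|_{L^2\mathcal H^{s+3/2}}$ is removed by interpolation, combined with induction on $s$ in steps of $1/2$. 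The base case at low regularity comes from the $L^\infty L^2$ energy bound with $T$-dependent constants.

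\textbf{Step 3: from the $2$-gain to the full $5/2$-gain.} To upgrade the gain from $2$ to $5/2$ derivatives, we use the identity
\[
D_x^{5/2}w=D_x^{5/2}Gw+D_x^{5/2}(1-g)w+{}\text{rank-one smooth part}.
\]
Here $D_x^{5/2}Gw$ is controlled from Step 1. The term $(1-g)w$ is treated by rerunning Step 2 with $w$ replaced by $D_x^{1/2}w$, i.e.\ at level $s+1/2$, with Step 1 now supplying a $\mathcal H^{s+1/2}$ bound. Alternatively, one interpolates. Bootstrapping in this way gives $\|v\|_{L^2\mathcal H^{s+5/2}}$.

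\textbf{Step 4: the $X_s$ scale.} Since $\partial_x^{-1}$ commutes with every operator in \eqref{eq:linear-reg} except $G$, the function $\tilde v=\partial_x^{-1}v$ solves \eqref{eq:linear-reg} with forcing $\partial_x^{-1}F-[\partial_x^{-1},GD_x^5G]v$. By Lemma~\ref{lem:G-properties}(i) this commutator has order $4$ in $x$, and it is bounded by $\|v\|_{X_{s+3/2}}$, which has already been controlled. The $y$-derivatives are handled by noting that $\langle D_y\rangle^s$ commutes with the whole equation, so the $H^s$ norm reduces to the $\mathcal H$-estimates applied to $\langle D_y\rangle^{\sigma}D_x^{s-\sigma}v$.

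\textbf{Main obstacle.} I expect the hardest part to be Step 2: constructing $\psi$ so that the sign change of $\psi'$ sits inside $\omega$, and verifying that every cross term produced by $G$ and $\psi$ is genuinely of lower order with $\varepsilon$-independent constants.
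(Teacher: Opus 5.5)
\paragraph{The gap is Step~3, and it cannot be repaired.}
Steps~1--2, with $\psi'$ of one sign off $\omega$, can at best give the Kato-type gain of two derivatives away from $\omega$. The only source of $5/2$ derivatives is the localized quantity $\|D_x^{5/2}(GD_x^s v)\|_{L^2_tL^2}$.

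Your upgrade reruns Step~2 at level $s+1/2$ ``with Step~1 supplying an $\mathcal H^{s+1/2}$ bound''. But Step~1 at level $s+1/2$ needs $v_0\in\mathcal H^{s+1/2}$, which is exactly the half derivative you are trying to gain, so the bootstrap is circular. Interpolating a global $\mathcal H^{s+2}$ bound with an $\mathcal H^{s+5/2}$ bound on $Gw$ cannot give a global $\mathcal H^{s+5/2}$ bound either.

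In fact no argument can succeed, at least when $\operatorname{supp}g\neq\mathbb T$ as in the control setting. Take $\widehat{v_0}$ supported in $\{N/2\le|k|\le 3N/2,\ |\eta|\le 1\}$, with $v_0$ concentrated near a point at distance $d>0$ from $\operatorname{supp}g$. This packet moves in $x$ with group velocity of size $\sim|\beta|N^4$, all in one direction. So it stays off $\operatorname{supp}g$ for $0\le t\le c\,d\,N^{-4}$. Compare it with the free flow $e^{t\beta\partial_x^5}v_0$ using Duhamel and the accretivity of $L_\varepsilon$: on that interval the $L^2$ error is $\lesssim(\varepsilon N+N^{-1})\|v_0\|_{L^2}$. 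Hence
\[
\int_0^T\|D_x^{5/2}v(t)\|_{L^2}^2\,dt\gtrsim d\,N\,\|v_0\|_{L^2}^2\qquad\text{whenever }\varepsilon N\ll 1 .
\]
Taking $N\sim\varepsilon^{-1}$ rules out any $\varepsilon$-uniform constant in \eqref{eq:propagation-est-again}. The same packet gives a ratio $\sim dN$ at every level $s$.

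The paper's own proof does not supply the missing idea. Its Step~5 obtains the global $5/2$ gain from the asserted coercivity $\|D_x^{5/2}(Gw)\|_{L^2}^2\ge c\|w\|_{\mathcal H^{5/2}}^2-C\|w\|_{\mathcal H^0}^2$. That inequality is false as soon as $\operatorname{supp}g\neq\mathbb T$, because $Gw=0$ for every mean-zero $w$ supported in $\mathbb T\setminus\operatorname{supp}g$. The damping--dispersion mechanism yields at most two derivatives globally and $5/2$ only inside $\omega$.

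\paragraph{Three further points, independent of Step~3.}
(i) Your bound of $\int w\,E_sw$ by $C\|w\|_{\mathcal H^2}^2$ sits exactly at the level of the Kato gain, with a non-small constant, so Steps~1--2 do not close as written. What saves you is structure, not order. The principal symbol of $E_s$ is $-2is\,gg'\,\xi|\xi|^3$, which is purely imaginary. Therefore $\int w\,(E_sw)\,\psi$ is a form of order $3$, bounded by $C\|w\|_{\mathcal H^{3/2}}^2$. This is what would justify your error term $\|v\|_{L^2\mathcal H^{s+3/2}}$.

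(ii) The bound \eqref{eq:transverse-comm-bound} that you invoke is false. For real mean-zero $w$ one has exactly
\[
\int_\Omega w\,[\partial_x^{-1},\psi]\partial_y^2w\,dx\,dy=-\int_\Omega\psi'\,|\partial_x^{-1}\partial_yw|^2\,dx\,dy ,
\]
and the right-hand side is not controlled by $\|w\|_{X_0}^2$. This term must be kept as a signed Kato-type term. It is favourable for one relative sign of $\beta$ and $\gamma$ and unfavourable for the other.

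(iii) In Step~4, every operator in \eqref{eq:linear-reg} commutes with Fourier multipliers in $y$, so there is no gain in $y$ at all. Applying the $\mathcal H$-estimates to $\langle D_y\rangle^\sigma D_x^{s-\sigma}v$ produces $\langle D_y\rangle^{\sigma}D_x^{s-\sigma+5/2}v$, never $\langle D_y\rangle^{s+5/2}v$. Any estimate of this type can therefore only hold in the $x$-anisotropic scale.
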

\begin{proof}
Throughout the proof, $C(\cdot)$ denotes constants that may change from line to line but are independent of $0<\varepsilon<1$. Let us give the proof step by step. 

\medskip
\noindent\emph{\textbf{Step 1.} Reduction to smooth data.}  

\medskip

Assume first that $v_0$ and $F$ are smooth and compactly supported in time.
Then the solution $v$ of \eqref{eq:linear-reg} is smooth on $[0,T]\times\Omega$. The general case follows by density in $X_{s,0}(\Omega)$ and $L^2(0,T;X_{s-5/2,0}(\Omega))$.

\medskip
\noindent\emph{\textbf{Step 2.} Equation for $w=D_x^s v$.}  
\medskip

Setting $w:=D_x^s v$ and applying $D_x^s$ to \eqref{eq:linear-reg}, the
commutator decomposition \eqref{eq:w-eq-5KP-prelim}–\eqref{eq:Es-def} yields
\begin{equation}\label{eq:w-eq-prop-KP-final-short}
\partial_t w
- \beta\,\partial_x^5 w
+ \gamma\,\partial_x^{-1}\partial_y^2 w
+ G D_x^{5} G w
+ \varepsilon D_x^{5} w
+ E_s w
= D_x^s F,
\end{equation}
where $E_s$ is an order-$4$ operator satisfying \eqref{eq:Es-order}.

\medskip
\noindent\emph{\textbf{Step 3.} Weighted energy identity.} 
\medskip

Fix $\psi\in C^\infty(\mathbb{T})$. Applying Lemma \ref{lem:weighted-5KP} to
$w$, integrating in time on $(0,\tau)$, and moving the non-negative terms to the left-hand side gives
\begin{equation}\label{eq:prop-intermediate-short}
\begin{aligned}
&\frac12\int_\Omega w(\tau)^2\psi
+ \frac{5\beta}{2}\int_0^\tau\!\!\int_\Omega (\partial_x^2 w)^2\psi'
+ \int_0^\tau\!\!\int_\Omega D_x^{5/2}(Gw)D_x^{5/2}G(\psi w)
+ \varepsilon\int_0^\tau\!\!\int_\Omega (D_x^{5/2}w)^2\psi
\\
&\le
\frac12\int_\Omega w(0)^2\psi
+ C(\psi)\!\int_0^\tau\!\!\int_\Omega (|w|^2+|\partial_x w|^2)
+ \int_0^\tau\!\!\int_\Omega |w||D_x^sF|
\\
&\quad
+ \int_0^\tau\!\!\int_\Omega |w||E_sw|
+ \varepsilon\!\int_0^\tau\!\!\int_\Omega |D_x^{5/2}w[D_x^{5/2},\psi]w|
+ \left|\int_0^\tau\!\!\int_\Omega w[\partial_x^{-1}\partial_y^2,\psi]w\right|.
\end{aligned}
\end{equation}
The terms involving $\psi^{(3)}$ and $\psi^{(5)}$ have been absorbed in the
lower-order contribution.

\medskip
\noindent\emph{\textbf{Step 4.} Lower-order terms.} 
\medskip

Each term on the right-hand side is controlled by $\|w\|_{L^2(0,T;\mathcal H^{5/2})}$ and $\|w\|_{L^2(0,T;\mathcal H^0)}$. Indeed,
\[
\int |w||D_x^sF|
\le C(\psi)\|w\|_{L^2(0,T;\mathcal H^{5/2})}
\|F\|_{L^2(0,T;\mathcal H^{s-5/2})},
\]
by $\mathcal H^{5/2}$–$\mathcal H^{-5/2}$ duality. Using \eqref{eq:Es-order} and $\|w\|_{\mathcal H^{3/2}}\le \|w\|_{\mathcal H^{5/2}}$,
\[
\int |w||E_sw|
\le C(\psi)\int_0^T\|w\|_{\mathcal H^{5/2}}^2.
\]
Since $[D_x^{5/2},\psi]$ has order $3/2$,
\[
\varepsilon\int |D_x^{5/2}w[D_x^{5/2},\psi]w|
\le C(\psi)\int_0^T\|w\|_{\mathcal H^{5/2}}^2,
\]
uniformly in $\varepsilon$. Finally, by Remark \ref{rem:transverse-comm},
$[\partial_x^{-1}\partial_y^2,\psi]$ has order $0$, hence
\[
\left|\int w[\partial_x^{-1}\partial_y^2,\psi]w\right|
\le C(\psi)\int_0^T\|w\|_{\mathcal H^0}^2 .
\]

\noindent\emph{\textbf{Step 5.} $\mathcal H^s$-estimates}
\medskip

Let $s=0$. Taking $\psi\equiv1$ in Lemma \ref{lem:weighted-5KP} eliminates all commutators and derivative terms. Using Cauchy-Schwarz, Young's inequality, and the coercivity of the feedback operator,
\[
\|D_x^{5/2}(Gw)\|_{L^2}^2
\ge c\|w\|_{\mathcal H^{5/2}}^2 - C\|w\|_{\mathcal H^0}^2,
\]
one obtains, after Grönwall inequality,
\begin{equation}\label{eq:L2-energy-short}
\|w\|_{L^\infty(0,T;\mathcal H^0)}^2
+ \varepsilon\|w\|_{L^2(0,T;\mathcal H^{5/2})}^2
\le
C(s,T)\left(
\|w_0\|_{\mathcal H^0}^2
+ \|F\|_{L^2(0,T;\mathcal H^{s-5/2})}^2
\right).
\end{equation}

For $s>0$, combining the bounds above with \eqref{eq:prop-intermediate-short}, applying Young's inequality, and using \eqref{eq:L2-energy-short} to control the $\mathcal H^0$ terms yields
\[
\|w\|_{L^\infty(0,T;\mathcal H^0)}
+
\|w\|_{L^2(0,T;\mathcal H^{5/2})}
\le
C(s,T)\left(
\|w_0\|_{\mathcal H^0}
+
\|F\|_{L^2(0,T;\mathcal H^{s-5/2})}
\right).
\]
Since $w=D_x^s v$, this is equivalent to
$$
\|v\|_{L^\infty(0,T;\mathcal H^s)}
+
\|v\|_{L^2(0,T;\mathcal H^{s+5/2})}
\le
C(s,T)\left(
\|v_0\|_{\mathcal H^s}
+
\|F\|_{L^2(0,T;\mathcal H^{s-5/2})}
\right).
$$

Now, let $z:=\partial_x^{-1}v$. Applying $\partial_x^{-1}$ to \eqref{eq:linear-reg} yields an equation for $z$ analogous to \eqref{eq:w-eq-prop-KP-final-short}, up to a commutator $R=[\partial_x^{-1},GD_x^5G]$, which is of order $4$ in $z$ and can be treated as a lower-order perturbation exactly as for $E_s$. Repeating the preceding argument gives
\[
\|z\|_{L^\infty(0,T;\mathcal H^s)}
+
\|z\|_{L^2(0,T;\mathcal H^{s+5/2})}
\le
C(s,T)\left(
\|\partial_x^{-1}v_0\|_{\mathcal H^s}
+
\|\partial_x^{-1}F\|_{L^2(0,T;\mathcal H^{s-5/2})}
\right).
\]
Using the definition of $X_s$ and summing the bounds for $v$ and $z$ yields
\[
\|v\|_{Z^{s,T}}
\le
C(s,T)\left(
\|v_0\|_{X_s}
+
\|F\|_{L^2(0,T;X_{s-5/2})}
\right),
\]
with $C(s,T)$ independent of $0<\varepsilon<1$. This completes the proof.
\end{proof}

The next proposition ensures a $X_s$-energy estimate for the regularized flow. 

\begin{proposition}\label{prop:Hs-estimate-prelim}
	Let $s\in\mathbb{R}$, $T>0$, $0<\varepsilon<1$, $v_0\in X_{s,0}(\Omega)$, and $F\in L^2\left(0,T;X_{s-5/2,0}(\Omega)\right).$ Consider $v$ be a smooth solution of the regularized linear problem \eqref{eq:linear-reg} on $[0,T]$. Then
	\begin{equation}\label{eq:Hs-energy-5KP}
	\begin{split}
		\|v(t)\|_{X_s}^2
		+ \int_0^t\Bigl(&\|D_x^{5/2}(G D_x^s v(\tau))\|_{L^2}^2
		+ \|D_x^{5/2}(G D_x^s z(\tau))\|_{L^2}^2  \\
		&+ \varepsilon\|D_x^{5/2} v(\tau)\|_{X_s}^2\Bigr)\,d\tau
		\le C(s,T)\Bigl(\|v_0\|_{X_s}^2
		+ \|F\|_{L^2(0,T;X_{s-5/2})}^2\Bigr).
	\end{split}
	\end{equation}
	for all $t\in[0,T]$, where $z:=\partial_x^{-1}v$, and $C(s,T)>0$ is nondecreasing in $T$
	and independent of $\varepsilon$.
\end{proposition}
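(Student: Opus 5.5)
The plan is a direct energy estimate at the level of $w:=D_x^s v$ and of $D_x^s z$ with $z:=\partial_x^{-1}v$. These two pieces are then summed, using the norm equivalence $\|u\|_{\mathcal H^s}\cong\|D_x^s u\|_{L^2}$ on mean-zero functions and the fact that $D_x^s$ commutes with $\partial_y$, so the $x$-derivative version controls the $X_s$ norm in the same way the paper does in Proposition \ref{prop:propagation}.

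\emph{First step.} Apply Lemma \ref{lem:weighted-5KP} with $\psi\equiv1$. All terms containing $\psi'$, $\psi^{(3)}$, $\psi^{(5)}$, $[D_x^{5/2},\psi]$ and $[\partial_x^{-1}\partial_y^2,\psi]$ then vanish. Integrating on $(0,t)$ gives
\[
\tfrac12\|w(t)\|_{L^2}^2+\int_0^t\!\Bigl(\|D_x^{5/2}Gw\|_{L^2}^2+\varepsilon\|D_x^{5/2}w\|_{L^2}^2\Bigr)
=\tfrac12\|w(0)\|_{L^2}^2-\int_0^t\!\!\int_\Omega w\,E_sw+\int_0^t\!\!\int_\Omega w\,D_x^sF .
\]
For the forcing term, use $\mathcal H^{5/2}$--$\mathcal H^{-5/2}$ duality and Young's inequality:
\[
\Bigl|\int w\,D_x^sF\Bigr|\le\delta\|w\|_{\mathcal H^{5/2}}^2+C_\delta\|F\|_{\mathcal H^{s-5/2}}^2 .
\]
The resulting $\delta\|w\|_{\mathcal H^{5/2}}^2$ must be absorbed. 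It is not absorbed by the $\varepsilon$-term, which is not uniform in $\varepsilon$. Instead it is absorbed by Proposition \ref{prop:propagation}, which already bounds $\|v\|_{L^2(0,T;X_{s+5/2})}$ by the right-hand side of \eqref{eq:Hs-energy-5KP} uniformly in $\varepsilon$. So the cleanest route is simply to invoke \eqref{eq:propagation-est-again} for this term.

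\emph{Commutator term.} The term $\int w\,E_sw$ is the main obstacle. By \eqref{eq:Es-order}, $E_s$ has order $4$, so $|\int wE_sw|\le C\|w\|_{\mathcal H^{2}}^2$. This is again bounded by $\|w\|_{\mathcal H^{5/2}}^2$, whose time integral is controlled by Proposition \ref{prop:propagation}. If one prefers not to rely on that proposition, interpolate instead: $\|w\|_{\mathcal H^2}^2\le\delta\|w\|_{\mathcal H^{5/2}}^2+C_\delta\|w\|_{L^2}^2$. Then use the coercivity bound from Step 5 of the previous proof,
\[
\|D_x^{5/2}Gw\|^2\ge c\|w\|_{\mathcal H^{5/2}}^2-C\|w\|_{L^2}^2 ,
\]
to absorb the $\delta$-term, and close with Grönwall on $\|w(t)\|_{L^2}^2$. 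Since all constants come from $g$ and $s$ only, $C(s,T)$ (which grows like $e^{CT}$) is nondecreasing in $T$ and independent of $\varepsilon$.

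\emph{Second step.} Repeat the argument for $z=\partial_x^{-1}v$. Applying $\partial_x^{-1}$ to \eqref{eq:linear-reg} gives the same equation with forcing $\partial_x^{-1}F$, plus the commutator $R=[\partial_x^{-1},GD_x^5G]$. By Lemma \ref{lem:G-properties}(i),(iv), $R$ is of order at most $4$ on the mean-zero space, once the rank-one contribution $-\Pi_g\partial_x^{-1}$, which is smoothing, is accounted for. Therefore $D_x^sR$ can be estimated exactly like $E_s$, giving
\[
\|D_x^sz(t)\|^2+\int_0^t\Bigl(\|D_x^{5/2}GD_x^sz\|^2+\varepsilon\|D_x^{5/2}D_x^sz\|^2\Bigr)
\]
bounded by the right-hand side. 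Summing the estimates for $v$ and $z$ and using $\|v\|_{X_s}^2\cong\|D_x^sv\|^2+\|D_x^sz\|^2$ (the $y$-regularity in $H^s$ being treated through the same fiberwise norms as in Proposition \ref{prop:propagation}) yields \eqref{eq:Hs-energy-5KP}.
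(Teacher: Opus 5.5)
Your main route is correct relative to the paper, but it closes the estimate by a different mechanism.

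\textbf{The paper's route.} The paper pairs the equation for $w=D_x^s v$ with $w$, which is exactly your $\psi\equiv1$ identity. It bounds the forcing and $(E_sw,w)$ by $\|w\|_{\mathcal H^{5/2}}^2$, absorbs these through the coercivity inequality $\|D_x^{5/2}(Gw)\|_{L^2}^2\ge c\|w\|_{\mathcal H^{5/2}}^2-C\|w\|_{L^2}^2$, and concludes with Gr\"onwall. It then repeats this for $D_x^s z$.

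\textbf{Your route.} You instead import \eqref{eq:propagation-est-again}. That estimate is proved earlier without using the present statement, so there is no circularity. Once you do this, the proposition is essentially a corollary. The dissipative integrals on the left are bounded outright by $C\|v\|_{L^2(0,T;X_{s+5/2})}^2$, by boundedness of $G$ and $\varepsilon<1$. The energy identity is then needed only for $\sup_t\|v(t)\|_{X_s}$, and no Gr\"onwall step is required.

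\textbf{What each buys.} The paper's route gives a self-contained estimate, so that Proposition \ref{prop:uniform-Zs} genuinely combines two independent bounds. However, it hinges on the coercivity inequality, which fails whenever $g$ vanishes on an open set. A mean-zero $w$ supported in $\mathbb{T}\setminus\mathrm{supp}\,g$ has $Gw=0$, while $\|w\|_{\mathcal H^{5/2}}/\|w\|_{L^2}$ can be arbitrarily large.

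\textbf{Your fallback.} Your interpolation $\|w\|_{\mathcal H^2}^2\le\delta\|w\|_{\mathcal H^{5/2}}^2+C_\delta\|w\|_{L^2}^2$ does repair a second weakness of the paper's argument. The paper tries to absorb $C(s,g)\|w\|_{\mathcal H^{5/2}}^2$ from $E_s$ with a constant that is not small. Still, your fallback rests on the same coercivity inequality, so it is not a genuine alternative for a localized $g$.

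Be aware also that Step 5 of the proof of Proposition \ref{prop:propagation} uses that same inequality. Your main route therefore relocates this dependence rather than removing it.
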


\begin{proof}
Throughout the proof, the constants $C(\cdot)$ may change from line to line but remain independent of $0<\varepsilon<1$. As in Proposition \ref{prop:propagation}, we first work in $\mathcal{H}_0^s(\Omega)$ and then lift the estimate to $X_{s,0}(\Omega)$ using $z=\partial_x^{-1}v$. First, taking the $L^2(\Omega)$ inner product of \eqref{eq:linear-reg} with $v$ yields
\[
\frac12\frac{d}{dt}\|v(t)\|_{L^2}^2
+ \varepsilon(D_x^5 v,v)_{L^2}
+ (G D_x^5 G v,v)_{L^2}
= (F,v)_{L^2},
\]
since $\partial_x^5$ and $\partial_x^{-1}\partial_y^2$ are skew-adjoint on the
$x$–mean-zero subspace. Using the factorization
$D_x^5=(D_x^{5/2})^\ast D_x^{5/2}$ and the self-adjointness of $G$,
\[
(D_x^5 v,v)_{L^2}=\|D_x^{5/2}v\|_{L^2}^2,
\qquad
(GD_x^5Gv,v)_{L^2}=\|D_x^{5/2}(Gv)\|_{L^2}^2,
\]
so that
\[
\frac12\frac{d}{dt}\|v(t)\|_{L^2}^2
+ \varepsilon\|D_x^{5/2}v(t)\|_{L^2}^2
+ \|D_x^{5/2}(Gv(t))\|_{L^2}^2
= (F(t),v(t))_{L^2}.
\]
By Cauchy–Schwarz and Young's inequality, and Grönwall's inequality gives
$$
\|v(t)\|_{L^2}^2
+ \int_0^t\!\bigl(\|D_x^{5/2}(Gv(\tau))\|_{L^2}^2
+ \varepsilon\|D_x^{5/2}v(\tau)\|_{L^2}^2\bigr)\,d\tau
\le C(T)\Bigl(\|v_0\|_{L^2}^2+\|F\|_{L^2(0,T;L^2)}^2\Bigr).
$$

Now, let $w:=D_x^s v$. Applying $D_x^s$ to \eqref{eq:linear-reg} yields
$$
\partial_t w
- \beta\partial_x^5 w
+ \gamma\partial_x^{-1}\partial_y^2 w
+ G D_x^5 G w
+ \varepsilon D_x^5 w
+ E_s w
= D_x^sF,
$$
where $E_s$ is a commutator of order $4$ in $x$. Taking the $L^2$ inner product with $w$ and using the skew-adjointness of $\partial_x^5$ and $\partial_x^{-1}\partial_y^2$ gives
\[
\frac12\frac{d}{dt}\|w(t)\|_{L^2}^2
+ \varepsilon\|D_x^{5/2}w(t)\|_{L^2}^2
+ \|D_x^{5/2}(G w(t))\|_{L^2}^2
= (D_x^sF,w)_{L^2}-(E_s w,w)_{L^2}.
\]
Using $\mathcal{H}^{5/2}$–$\mathcal{H}^{-5/2}$ duality,
\[
|(D_x^sF,w)|\le
\frac{\delta}{2}\|w\|_{\mathcal{H}^{5/2}}^2
+\frac{1}{2\delta}\|F\|_{\mathcal{H}^{s-5/2}}^2,
\]
and
\[
|(E_sw,w)|\le C(s,g)\|w\|_{\mathcal{H}^{5/2}}^2.
\]
Choosing $\delta>0$ small and using the feedback coercivity
$$
\|D_x^{5/2}(Gw)\|_{L^2}^2
\ge c\|w\|_{\mathcal{H}^{5/2}}^2-C\|w\|_{L^2}^2,
$$
we obtain
\[
\frac{d}{dt}\|w(t)\|_{L^2}^2
+ \|D_x^{5/2}(G w(t))\|_{L^2}^2
+ \varepsilon\|D_x^{5/2}w(t)\|_{L^2}^2
\le C(s)\bigl(\|w(t)\|_{L^2}^2+\|F(t)\|_{\mathcal{H}^{s-5/2}}^2\bigr).
\]
Integration and Grönwall yield
\begin{equation}\label{eq:Hs-energy-w-final}
\begin{split}
\|w(t)\|_{L^2}^2
&+ \int_0^t\!\bigl(\|D_x^{5/2}(G w(\tau))\|_{L^2}^2
+ \varepsilon\|D_x^{5/2}w(\tau)\|_{L^2}^2\bigr)\,d\tau\\
&\le C(s,T)\Bigl(\|w_0\|_{L^2}^2
+ \|F\|_{L^2(0,T;\mathcal{H}^{s-5/2})}^2\Bigr).
\end{split}
\end{equation}

Finally, consider $z:=\partial_x^{-1}v$. Applying $\partial_x^{-1}$ to \eqref{eq:linear-reg} shows that $z$ satisfies
$$
\partial_t z
- \beta\partial_x^5 z
+ \gamma\partial_x^{-1}\partial_y^2 z
+ G D_x^5 G z
+ \varepsilon D_x^5 z
+ Rv
= \partial_x^{-1}F,
$$
where $R=[\partial_x^{-1},GD_x^5G]$ is a lower-order operator. Repeating the previous energy argument for $D_x^s z$ yields
\begin{equation}\label{eq:Hs-energy-z-final}
\begin{split}
\|D_x^s z(t)\|_{L^2}^2+& \int_0^t\!\bigl(\|D_x^{5/2}(G D_x^s z)\|_{L^2}^2
+ \varepsilon\|D_x^{5/2}D_x^s z\|_{L^2}^2\bigr)\,d\tau\\
&\le C(s,T)\Bigl(\|\partial_x^{-1}v_0\|_{\mathcal{H}^s}^2
+ \|\partial_x^{-1}F\|_{L^2(0,T;\mathcal{H}^{s-5/2})}^2\Bigr).
\end{split}
\end{equation}
Adding \eqref{eq:Hs-energy-w-final} and \eqref{eq:Hs-energy-z-final} and using the definition of the $X_s$ norm yields
\begin{equation*}
\begin{split}
\|v(t)\|_{X_s}^2
+ \int_0^t\Bigl(&\|D_x^{5/2}(G D_x^s v)\|_{L^2}^2
+ \|D_x^{5/2}(G D_x^s z)\|_{L^2}^2\\
& +\varepsilon\|D_x^{5/2}v\|_{X_s}^2\Bigr)d\tau
\le C(s,T)\Bigl(\|v_0\|_{X_s}^2
+ \|F\|_{L^2(0,T;X_{s-5/2})}^2\Bigr),
\end{split}
\end{equation*}
for all $t\in[0,T]$, with $C(s,T)$ independent of $\varepsilon$. This proves
\eqref{eq:Hs-energy-5KP}.
\end{proof}

\section{Local theory: Well-posedness}\label{sec:WLT} 

This section develops the linear framework for the KP-type model, establishing well-posedness for the associated linear problem and deriving key smoothing and propagation properties. We characterize the generated semigroup, obtain uniform energy estimates in the KP-adapted setting, and use them to control solutions in $Z^{s,T}$.

\subsection{Semigroup properties}

Recall the regularized linear problem \eqref{eq:linear-reg} and the KP-adapted space $Z^{s,T}$ defined in \eqref{eq:ZsT-5KP}-\eqref{eq:Z-norm-5KP}. Combining the basic $X_s$-energy estimate for the regularized flow (Proposition \ref{prop:Hs-estimate-prelim}) with the propagation estimate \eqref{eq:propagation-est-again} from Proposition \ref{prop:propagation}, we obtain uniform control of $v$ in $Z^{s,T}$.

\begin{proposition}\label{prop:uniform-Zs} 	Let $s\in\mathbb{R}$, $T>0$, and $0<\varepsilon<1$. Consider $v$ be a smooth solution of \eqref{eq:linear-reg} on $[0,T]$ with $v_0\in X_{s,0}(\Omega)$ and $F\in L^2\bigl(0,T;X_{s-5/2,0}(\Omega)\bigr).$ Then $v\in Z^{s,T}$ and
	\begin{equation}\label{eq:Zs-estimate}
		\|v\|_{Z^{s,T}}
		\le C(s,T)\bigl(\|v_0\|_{X_s} + \|F\|_{L^2(0,T;X_{s-5/2})}\bigr),
	\end{equation}
	for some constant $C(s,T)>0$ nondecreasing in $T$ and independent of $\varepsilon$.
\end{proposition}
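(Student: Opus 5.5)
The plan is to assemble \eqref{eq:Zs-estimate} from the two estimates already proved for the regularized flow, since the $Z^{s,T}$ norm \eqref{eq:Z-norm-5KP} splits into an $L^\infty(0,T;X_s)$ part and an $L^2(0,T;X_{s+5/2})$ part. Both $v_0\in X_{s,0}(\Omega)$ and $F\in L^2(0,T;X_{s-5/2,0}(\Omega))$ are exactly the hypotheses of Propositions \ref{prop:propagation} and \ref{prop:Hs-estimate-prelim}, and $v$ is smooth, so both results apply directly with the same $\varepsilon$.

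\emph{Step 1 (uniform-in-time part).} I apply Proposition \ref{prop:Hs-estimate-prelim}. Discarding the nonnegative dissipative integrals on the left of \eqref{eq:Hs-energy-5KP}, I obtain for every $t\in[0,T]$
\[
\|v(t)\|_{X_s}\le C(s,T)^{1/2}\bigl(\|v_0\|_{X_s}+\|F\|_{L^2(0,T;X_{s-5/2})}\bigr).
\]
Taking the supremum over $t$ controls $\|v\|_{L^\infty(0,T;X_s)}$. For $v\in C([0,T];X_{s,0})$, I argue as follows. Smoothness of $v$ gives continuity in $X_s$ for smooth data. The mean-zero condition is preserved because every term in \eqref{eq:linear-reg} has zero $x$-mean: $G$ preserves it by Lemma \ref{lem:G-properties}(i), and the remaining operators kill the zero mode. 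For general data one uses the density argument already invoked in Step 1 of Proposition \ref{prop:propagation}: approximating solutions are Cauchy in $C([0,T];X_s)$ by linearity and the bound above.

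\emph{Step 2 (smoothing part).} I apply \eqref{eq:propagation-est-again} from Proposition \ref{prop:propagation}, which directly bounds $\|v\|_{L^2(0,T;X_{s+5/2})}$ by the same right-hand side with a constant independent of $\varepsilon$. This also shows $v\in L^2(0,T;X_{s+5/2,0})$.

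\emph{Step 3 (conclusion).} I add the two bounds and take as the constant the sum of the two constants. Both constants are nondecreasing in $T$ and independent of $\varepsilon\in(0,1)$, so the sum inherits both properties.

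The only delicate point is bookkeeping. One must check that the constants of the two cited propositions are truly uniform in $\varepsilon$. One must also check that passing from squared norms to norms, via $\sqrt{a^2+b^2}\le a+b$, costs only a square root of the constant. Both follow from the statements as given, so I expect no genuine obstacle. The analytic work has already been done in the weighted identity, Lemma \ref{lem:weighted-5KP}, and the feedback coercivity.
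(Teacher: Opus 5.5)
Your proposal is correct and follows the paper's proof essentially verbatim: the paper also takes the $L^\infty(0,T;X_s)$ bound from Proposition \ref{prop:Hs-estimate-prelim} (after taking square roots) and the $L^2(0,T;X_{s+5/2})$ bound from Proposition \ref{prop:propagation}, then adds the two estimates, whose constants are independent of $\varepsilon$ and nondecreasing in $T$. Your extra remarks on continuity in time and on preservation of the $x$-mean-zero constraint are harmless additions that the paper leaves implicit.
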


\begin{proof} Note that Proposition \ref{prop:Hs-estimate-prelim} yields the uniform $X_s$–energy bound
\begin{align}\label{eq:Xs-ctrl-uniform}
\|v\|_{L^\infty(0,T;X_s)}^2
+ \int_0^T\Bigl(&\|D_x^{5/2}(GD_x^s v(t))\|_{L^2}^2
+ \|D_x^{5/2}(GD_x^s z(t))\|_{L^2}^2  \notag\\
&+ \varepsilon\|D_x^{5/2}v(t)\|_{X_s}^2\Bigr)\,dt
\le C(s,T)\Bigl(\|v_0\|_{X_s}^2
+ \|F\|_{L^2(0,T;X_{s-5/2})}^2\Bigr),
\end{align}
where $z=\partial_x^{-1}v$ and $C(s,T)$ is independent of $0<\varepsilon<1$.
Moreover, Proposition \ref{prop:propagation} gives
\[
\|v\|_{L^2(0,T;X_{s+5/2})}
\le C(s,T)\Bigl(\|v_0\|_{X_s} + \|F\|_{L^2(0,T;X_{s-5/2})}\Bigr),
\]
with the same independence in $\varepsilon$. Taking square roots in \eqref{eq:Xs-ctrl-uniform} and combining the two estimates, \eqref{eq:Zs-estimate} holds.
\end{proof}

We now turn to the semigroup generated by the regularized closed-loop operator
$$
	L_\varepsilon
	:= -\beta\,\partial_x^5 + \gamma\,\partial_x^{-1}\partial_y^2 + G D_x^5 G + \varepsilon D_x^5,
	\qquad
	\mathcal{D}(L_\varepsilon)=X_{5,0}(\Omega),
$$
acting on the KP–adapted energy space $X_{0,0}(\Omega)$, so that \eqref{eq:linear-reg} reads $\partial_t v + L_\varepsilon v = F$.

\begin{proposition}\label{prop:semigroup-5KP} Let $\varepsilon>0$. Then $-L_\varepsilon$ generates an analytic semigroup  $\{\mathcal{S}_\varepsilon(t)\}_{t\ge0}$ on $X_{0,0}(\Omega)$.  Moreover, for every $s\ge0$ the semigroup restricts to $X_{s,0}(\Omega)$ and
	\[
	\mathcal{S}_\varepsilon(t)X_{s,0}(\Omega)\subset X_{s,0}(\Omega),\qquad \forall\,t\ge0.
	\]
\end{proposition}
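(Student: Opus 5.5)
The plan is to realize $L_\varepsilon$ as an $m$-accretive operator that is a perturbation of a nonnegative self-adjoint principal part. I would then verify a sectorial resolvent bound, and finally deduce the invariance of $X_{s,0}(\Omega)$ from the uniform energy estimate of Proposition \ref{prop:Hs-estimate-prelim}.

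I split
\[
L_\varepsilon = A_\varepsilon + K, \qquad A_\varepsilon := \varepsilon D_x^5 + G D_x^5 G, \qquad K := -\beta\,\partial_x^5 + \gamma\,\partial_x^{-1}\partial_y^2 .
\]
On the $x$-mean-zero subspace, $A_\varepsilon$ is symmetric and nonnegative, since $(A_\varepsilon v,v)=\varepsilon\|D_x^{5/2}v\|^2+\|D_x^{5/2}Gv\|^2$. The feedback part $GD_x^5G$ is form-bounded relative to $\varepsilon D_x^5$ with a constant of order $1/\varepsilon$, so $A_\varepsilon$ is self-adjoint on $X_{5,0}$ by the KLMN theorem. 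The operator $K$ is skew-adjoint, because its Fourier symbol $i(-\beta k^5 - \gamma\eta^2/k)$ is purely imaginary on $k\neq0$.

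For the generation part:
\begin{itemize}
\item Accretivity: $\operatorname{Re}(L_\varepsilon v,v)_{X_0}\ge -C\|v\|_{X_0}^2$, where the constant $C$ comes only from the commutator $[\partial_x^{-1},GD_x^5G]$ that appears in the $\partial_x^{-1}v$ component, exactly as in Proposition \ref{prop:Hs-estimate-prelim}.
\item Range condition: I would solve $(\lambda+L_\varepsilon)v=f$ for $\lambda$ large. Fourier transforming in $y$ reduces this to a family of ODE-type problems on $\mathbb{T}$ indexed by $\eta$. On each fiber the operator is an elliptic order-$5$ operator in $x$, because $\varepsilon|k|^5$ dominates $\beta k^5$ in modulus of the real part. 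Lax–Milgram fiberwise, with bounds uniform in $\eta$ (by the accretivity estimate), gives $m$-accretivity and hence a $C_0$ semigroup.
\end{itemize}

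The analyticity claim requires a sector estimate of the form
\[
|\operatorname{Im}(L_\varepsilon v,v)|\le M\,\operatorname{Re}(L_\varepsilon v,v)+C\|v\|^2 ,
\]
or equivalently $\|(\lambda+L_\varepsilon)^{-1}\|\le M/|\lambda|$ on a sector. The dispersive part causes no difficulty: $|\beta(k^5|\hat v|^2)|\le (|\beta|/\varepsilon)\,\varepsilon|k|^5|\hat v|^2$, so it is controlled by the real part with $M=|\beta|/\varepsilon$.

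\emph{Main obstacle.} The transverse contribution $\gamma\int\sum_k (\eta^2/k)|\hat v(k,\eta)|^2$ is not controlled by $\varepsilon|k|^5$ uniformly in $\eta$. What I would try first is to work fiberwise in $\eta$ and prove sectoriality of each $L_{\varepsilon,\eta}$, tracking how the angle depends on $\eta$. I would then attempt to recover a uniform resolvent bound by a symbol-level computation. In the diagonal part, $|\lambda+\varepsilon|k|^5+i\sigma(k,\eta)|\ge c|\lambda|$ holds for $\lambda$ in a half-plane sector, since $\varepsilon|k|^5>0$ and $\operatorname{Re}\lambda\ge -\theta|\lambda|$ with $\theta$ small. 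The $G D_x^5 G$ part would be absorbed as a relatively bounded perturbation through a Neumann series.

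I expect this absorption step to be the delicate one. The perturbation is of the same order as the principal part, so it has to be handled using the accretive sign of $GD_x^5G$ rather than its size. Moreover, the symbol bound above only gives $|\lambda|$-decay where $\operatorname{Re}\lambda$ is not too negative, and the purely imaginary transverse frequencies $\eta^2/k$ may force the usable sector to shrink. If the uniform sector fails, I would state analyticity fiberwise in $\eta$ and keep the $C_0$ statement globally.

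Finally, the invariance $\mathcal S_\varepsilon(t)X_{s,0}\subset X_{s,0}$ follows from applying Proposition \ref{prop:Hs-estimate-prelim} with $F=0$ to smooth data and arguing by density. Mean-zero preservation holds because each term of $L_\varepsilon$ annihilates the $k=0$ mode and maps into mean-zero functions, by Lemma \ref{lem:G-properties}(i).
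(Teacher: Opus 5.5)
\textbf{There is a genuine gap, and it cannot be closed: the analyticity claim is false.}

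Your generation part (accretivity plus a fiberwise Lax--Milgram range condition) is essentially sound. However, the proposal does not prove the statement: analyticity on $X_{0,0}(\Omega)$ is left open, and the one concrete estimate you offer for it is false. The bound $|\lambda+\varepsilon|k|^5+i\sigma(k,\eta)|\ge c|\lambda|$ already fails on the imaginary axis. For $k=1$ and $\lambda=-i\sigma(1,\eta)$ the left side equals $\varepsilon$, while $|\lambda|\sim\eta^2$.

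The obstacle you flagged is fatal, not just delicate. The feedback $GD_x^5G$ cannot cure it, because it is bounded, uniformly in $\eta$, on functions with a fixed $x$-frequency. Let $f_N=e^{ix}\phi_N(y)$ with $\|\phi_N\|_{L^2}=1$ and $\widehat{\phi_N}$ supported in $[N-N^{-1},N+N^{-1}]$, and let $\mu_N=r-i\gamma N^2$ with $r>0$ fixed. Then $GD_x^5Gf_N=m(x)\phi_N(y)$, where $m:=GD_x^5G\,e^{ix}$ is a fixed smooth function. The remaining terms act on $\widehat{\phi_N}$ through the multiplier $r+\varepsilon-i\beta+i\gamma(\eta^2-N^2)$, and $|\eta^2-N^2|\le 3$ on its support. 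Hence
\[
\|(\mu_N+L_\varepsilon)f_N\|_{L^2}\le C\|f_N\|_{L^2},
\qquad\text{so}\qquad
\|(\mu_N+L_\varepsilon)^{-1}\|\ge C^{-1},
\]
with $C$ independent of $N$. If $-L_\varepsilon$ generated an analytic semigroup, there would be $\omega,M$ with $\|(\mu+L_\varepsilon)^{-1}\|\le M/|\mu-\omega|$ for $\operatorname{Re}\mu>\omega$ \cite[Ch.~II, \S4]{EngelNagel2000}. Taking $r>\omega$ and $N\to\infty$ contradicts the lower bound above; recall that the $X_0$ and $L^2$ norms are equivalent on mean-zero functions. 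So $-L_\varepsilon$ does not generate an analytic semigroup on $X_{0,0}(\Omega)$, and the analyticity claim cannot be proved by any route.

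\textbf{Comparison with the paper, and what survives.}

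The paper's proof gets past this point only through two assertions, justified by calling $\partial_x^{-1}\partial_y^2$ ``of order $2$'':
\[
|(\partial_x^{-1}\partial_y^2 f,f)|\le\delta\|D_x^{5/2}f\|^2+C_\delta\|f\|_{X_0}^2,
\qquad
\|\partial_x^{-1}\partial_y^2 f\|_{X_0}\le\delta\|\varepsilon D_x^5 f\|_{X_0}+C\|f\|_{X_0}.
\]
The same $f_N$ violates both: the left sides are of size $N^2$ and the right sides stay bounded. The reason is exactly the one you gave, namely that $D_x^{5/2}$ controls no $y$-derivatives. The paper's Neumann series would in any case need a relative bound below $1$, not $\mu_\varepsilon=C/\varepsilon$. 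So your diagnosis is correct, and the paper does not get around the obstacle.

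What is true is essentially your fallback:
\begin{itemize}
\item $-L_\varepsilon$ generates a $C_0$ contraction semigroup, on its maximal domain rather than $X_{5,0}(\Omega)$, since the resolvent does not produce five $y$-derivatives.
\item The right norm for this is the equivalent $L^2$ norm, in which $L_\varepsilon$ is exactly accretive. Your $X_0$ accretivity constant cannot come from $[\partial_x^{-1},GD_x^5G]$ alone, since that commutator has order $3$.
\item Each fiber $L_{\varepsilon,\eta}$ is sectorial with half-angle $\arctan(|\beta|/\varepsilon)$, after a shift depending on $\eta$.
\item $X_{s,0}(\Omega)$ is invariant. For this at fixed $\varepsilon>0$ you do not need the $\varepsilon$-uniform Proposition~\ref{prop:Hs-estimate-prelim}: the lower-order commutator of $GD_x^5G$ with the $X_s$ weight can be absorbed directly into $\varepsilon\|D_x^{5/2}\cdot\|^2$.
\end{itemize}
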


\begin{proof} Write
$$
L_\varepsilon = A_\varepsilon + B,
\qquad
A_\varepsilon := \varepsilon D_x^5,
\qquad\text{and}\quad
B := -\beta\,\partial_x^5 + \gamma\,\partial_x^{-1}\partial_y^2 + G D_x^5 G,
$$
with common domain $\mathcal{D}(A_\varepsilon)=\mathcal{D}(B)=X_{5,0}(\Omega)$. The proof proceeds in three steps: sectoriality of $A_\varepsilon$, sectoriality of $L_\varepsilon$, and invariance of $X_{s,0}$.

\medskip
\noindent\emph{\textbf{Step 1.} Sectoriality of $A_\varepsilon$ on $X_{0,0}$.} 
\medskip

On the $x$–mean-zero subspace, $D_x^5$ has symbol $|k|^5$, hence $A_\varepsilon$ is positive self-adjoint on $L^2(\Omega)$ and
$$
(A_\varepsilon f,f)_{L^2}
= \varepsilon\|D_x^{5/2}f\|_{L^2}^2 \ge 0,
\qquad \forall f\in X_{5,0}(\Omega).
$$
Thus $-A_\varepsilon$ generates a contraction semigroup on $L^2(\Omega)$. Since $D_x^5$ and $\partial_x^{-1}$ are Fourier multipliers on nonzero $x$–modes, they commute, hence $A_\varepsilon$ preserves $X_{0,0}(\Omega)$ and its resolvent $(\lambda+A_\varepsilon)^{-1}$ preserves the $x$–mean-zero constraint. Consequently $-A_\varepsilon$ generates an analytic contraction semigroup on $X_{0,0}(\Omega)$. For $\mathrm{Re}\,\lambda>0$, we have
\begin{equation}\label{eq:resolvent-Aeps-X0}
\|(\lambda + A_\varepsilon)^{-1}\|_{\mathcal{L}(X_0)}
\le \frac{1}{\mathrm{Re}\,\lambda},
\qquad\text{and}\qquad
\|A_\varepsilon(\lambda + A_\varepsilon)^{-1}\|_{\mathcal{L}(X_0)}
\le 1,
\end{equation}
since the symbol of $A_\varepsilon(\lambda+A_\varepsilon)^{-1}$ is $\varepsilon|k|^5/(\lambda+\varepsilon|k|^5)$, which is bounded by $1$. In particular
$$\|D_x^5(\lambda+A_\varepsilon)^{-1}\|_{\mathcal{L}(X_0)}\le C\varepsilon^{-1}.$$

\medskip
\noindent\emph{\textbf{Step 2.} Sectoriality of $L_\varepsilon = A_\varepsilon + B$.} 
\medskip

Decompose $B=B_{\mathrm{s}}+B_+$ with
\[
B_{\mathrm{s}}:=-\beta\,\partial_x^5+\gamma\,\partial_x^{-1}\partial_y^2,
\qquad\text{and}\qquad
B_+:=G D_x^5 G.
\]
Here, $B_{\mathrm{s}}$ is skew-adjoint on $L^2(\Omega)$ and $B_+$ is positive self-adjoint. Hence,
\begin{equation}\label{eq:Re-Leps}
\mathrm{Re}(L_\varepsilon f,f)_{L^2}
=\varepsilon\|D_x^{5/2}f\|_{L^2}^2
+\|D_x^{5/2}(Gf)\|_{L^2}^2
\ge \varepsilon\|D_x^{5/2}f\|_{L^2}^2,
\end{equation}
 for $f\in X_{5,0}(\Omega)$. Thus $L_\varepsilon$ is accretive.

For the imaginary part, note that
\[
\mathrm{Im}(L_\varepsilon f,f)_{L^2}=\mathrm{Im}(B_{\mathrm{s}}f,f)_{L^2}.
\]
Since $\partial_x^5$ has symbol $ik^5$ on the mean-zero subspace, we get
\[
|(\partial_x^5 f,f)_{L^2}|
\le \|D_x^{5/2}f\|_{L^2}^2 .
\]
Moreover $\partial_x^{-1}\partial_y^2$ is of order $2$, so by interpolation, for any $\delta>0$, yields that
\[
|(\partial_x^{-1}\partial_y^2 f,f)_{L^2}|
\le \delta\|D_x^{5/2}f\|_{L^2}^2 + C_\delta\|f\|_{X_0}^2 .
\]
Combining the previous inequality with \eqref{eq:Re-Leps} gives
\begin{equation*}
\begin{split}
|\mathrm{Im}(L_\varepsilon f,f)_{L^2}|
&\le (|\beta|+C\delta)\|D_x^{5/2}f\|_{L^2}^2
+ C_\delta|\gamma|\|f\|_{X_0}^2 \nonumber\\
&\le \frac{|\beta|+C\delta}{\varepsilon}\mathrm{Re}(L_\varepsilon f,f)_{L^2}
+ C_\delta\|f\|_{X_0}^2 .
\end{split}
\end{equation*}
Hence, the numerical range of $L_\varepsilon\in\Sigma_{\theta_\varepsilon}$ (up to a bounded shift), with $\theta_\varepsilon=\arctan((|\beta|+1)/\varepsilon)$.

To obtain the resolvent bound, we estimate $B$ relative to $A_\varepsilon$. On mean-zero functions, we have that
\[
\|\beta\,\partial_x^5 f\|_{X_0}
\le \frac{|\beta|}{\varepsilon}\|A_\varepsilon f\|_{X_0}.
\]
Since $\partial_x^{-1}\partial_y^2$ is of order $2$, 
\[
\|\gamma\,\partial_x^{-1}\partial_y^2 f\|_{X_0}
\le \delta\|A_\varepsilon f\|_{X_0} + C_{\delta,\gamma,\varepsilon}\|f\|_{X_0}.
\]
For the feedback term, writing $D_x^5(Gf)=G(D_x^5 f)+[D_x^5,G]f$ and noting that $[D_x^5,G]$ has order $4$, holds that
\[
\|G D_x^5 G f\|_{X_0}
\le \frac{C(g)}{\varepsilon}\|A_\varepsilon f\|_{X_0}
+ C_{\delta,g,\varepsilon}\|f\|_{X_0}.
\]
Thus, $B$ is $A_\varepsilon$-bounded with finite relative bound given by
$$
\|Bf\|_{X_0}\le \mu_\varepsilon\|A_\varepsilon f\|_{X_0} + C_\varepsilon\|f\|_{X_0},\qquad \forall f\in X_{5,0}(\Omega),
$$
where $\mu_\varepsilon=C(|\beta|,g)/\varepsilon$. Thanks to \eqref{eq:resolvent-Aeps-X0} follows that
\[
\|B(\lambda+A_\varepsilon)^{-1}\|
\le \mu_\varepsilon + \frac{C_\varepsilon}{|\lambda|}
\to \mu_\varepsilon, \quad \text{as } |\lambda|\to\infty.
\]
For $|\lambda|$ large, the Neumann series yields
\[
(\lambda+L_\varepsilon)^{-1}
=(\lambda+A_\varepsilon)^{-1}
(\mathrm{Id}+B(\lambda+A_\varepsilon)^{-1})^{-1}.
\]
Combining this with the sector estimate and the standard sectoriality criterion \cite[Ch. II, \S4]{EngelNagel2000}, $L_\varepsilon$ is sectorial on $X_{0,0}(\Omega)$ and $-L_\varepsilon$ generates an analytic semigroup $\{\mathcal{S}_\varepsilon(t)\}_{t\ge0}$.

\medskip
\noindent\emph{\textbf{Step 3.} Invariance of $X_{s,0}$.}
\medskip

Let $\Lambda^s=\langle D_x\rangle^s$. The operators $\partial_x^5$, $D_x^5$, $\partial_x^{-1}\partial_y^2$ and $\partial_x^{-1}$
commute with $\Lambda^s$, so the only commutator comes from the feedback term:
\[
[\Lambda^s,L_\varepsilon]
=[\Lambda^s,G D_x^5 G]
=[\Lambda^s,G]D_x^5 G + G D_x^5[\Lambda^s,G].
\]
Since $[\Lambda^s,G]$ has order $s-1$ in $x$, $[\Lambda^s,L_\varepsilon]$ is of lower order and maps $X_{s,0}$ into $X_{s-1,0}$. Standard regularity results for analytic semigroups on sectorial operators, see for instance  \cite[Ch. V]{AmannLinear1995}, implies that $X_{s,0}$ is invariant under $\mathcal{S}_\varepsilon(t)$ for every $s\ge0$.
\end{proof}

\subsection{Passage to the limit \texorpdfstring{$\varepsilon\to0$}{epsilon to 0}}  We will establish the well-posedness and exponential bounds for the semigroup associated with the non-regularized closed-loop linear fifth-order KP system in the KP–adapted Sobolev scale $X_s$. The argument follows the Bona–Smith approximation method \cite{BoSm1975}, with all estimates carried out simultaneously for $v$ and $\partial_x^{-1}v$.

\begin{proposition}\label{prop:BS-limit-5KP-X}  Fix $s\in\mathbb{R}$ and $T>0$. Let $v_0\in X_{s,0}(\Omega)$ and $F\in L^2(0,T;X_{s-5/2}(\Omega))$. Then there exists a unique solution $v\in Z^{s,T}$ to the unregularized closed-loop problem
	\begin{equation}\label{eq:limit-closed-loop-5KP-X}
		\left\{
		\begin{aligned}
			\partial_t v
			- \beta\,\partial_x^5 v
			+ \gamma\,\partial_x^{-1}\partial_y^2 v
			+ G D_x^5 G v &= F,
			&& (x,y,t)\in\Omega\times(0,T),\\
			v(x,y,0) &= v_0(x,y),
		\end{aligned}
		\right.
	\end{equation}
	and
	\begin{equation}\label{eq:apriori-limit-5KP-X}
		\|v\|_{Z^{s,T}}
		\le C(s,T)\Bigl(\|v_0\|_{X_s}
		+ \|F\|_{L^2(0,T;X_{s-5/2})}\Bigr),
	\end{equation}
	with $C(s,T)$ nondecreasing in $T$. Moreover, if $s\ge0$ and $F\equiv0$, then \eqref{eq:limit-closed-loop-5KP-X} generates a  $C_0$–semigroup $\mathcal{S}(t)$ on $X_{s,0}(\Omega)$ and there exist constants $C=C(s)>0$ and $\lambda>0$ such that
	\begin{equation}\label{eq:decay-limit-5KP-X}
		\|\mathcal{S}(t)v_0\|_{X_s}
		\le C e^{-\lambda t}\|v_0\|_{X_s},
		\qquad \forall t\ge0,\quad \text{and}\quad v_0\in X_{s,0}(\Omega).
	\end{equation}
\end{proposition}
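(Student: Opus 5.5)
The plan has two parts: a Bona–Smith passage to the limit $\varepsilon\to0$, and a compactness–uniqueness argument that upgrades the energy identity to exponential decay.

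\emph{Existence and uniqueness.} For existence I will use the Bona–Smith scheme. Fix a smooth cutoff $\chi$ and set $v_0^\varepsilon:=\chi(\varepsilon^{1/5}D_x)v_0$, and similarly $F^\varepsilon$. These are smooth, $x$-mean-zero, and satisfy $v_0^\varepsilon\to v_0$ in $X_s$ and $F^\varepsilon\to F$ in $L^2(0,T;X_{s-5/2})$. Let $v^\varepsilon$ be the solution of \eqref{eq:linear-reg} with these data, given by Proposition \ref{prop:semigroup-5KP} and the Duhamel formula. Proposition \ref{prop:uniform-Zs} gives
\[
\|v^\varepsilon\|_{Z^{s,T}}\le C(s,T)\bigl(\|v_0\|_{X_s}+\|F\|_{L^2(0,T;X_{s-5/2})}\bigr),
\]
uniformly in $\varepsilon$.

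Next I show that $(v^\varepsilon)$ is Cauchy in $Z^{s,T}$. The difference $v^\varepsilon-v^{\varepsilon'}$ solves \eqref{eq:linear-reg} with parameter $\varepsilon$, data $v_0^\varepsilon-v_0^{\varepsilon'}$, and forcing
\[
F^\varepsilon-F^{\varepsilon'}+(\varepsilon'-\varepsilon)D_x^5v^{\varepsilon'}.
\]
The term $(\varepsilon'-\varepsilon)D_x^5v^{\varepsilon'}$ must be small in $L^2(0,T;X_{s-5/2})$. This requires
\[
|\varepsilon-\varepsilon'|\,\|v^{\varepsilon'}\|_{L^2(0,T;X_{s+5/2})}\to0,
\]
which I obtain by applying Proposition \ref{prop:uniform-Zs} at level $s+5/2$ to the mollified data. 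That gives the bound $C\varepsilon'^{-1/2}$ times a quantity that is $o(1)$ as $\varepsilon'\to0$, by the usual Bona–Smith tail estimate. This estimate is where the care goes.

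Passing to the limit yields $v\in Z^{s,T}$ solving \eqref{eq:limit-closed-loop-5KP-X} in the distributional sense, and \eqref{eq:apriori-limit-5KP-X} is inherited from the uniform bound. For uniqueness, I take the difference $d$ of two solutions, which solves the equation with zero data. I run the $\psi\equiv1$ energy identity of Lemma \ref{lem:weighted-5KP} with $\varepsilon=0$, justified by mollifying $d$ in $x$ and handling the commutator with $G$ via Lemma \ref{lem:G-properties}(iv), and apply Gronwall to get $d\equiv0$.

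\emph{Semigroup and decay.} When $F=0$, uniqueness together with the continuity in $X_s$ built into $Z^{s,T}$ gives a $C_0$-semigroup $\mathcal S(t)$ on $X_{s,0}$, contractive in $L^2$ by the energy identity
\[
\tfrac12\|v(T)\|_{L^2}^2+\int_0^T\|D_x^{5/2}Gv\|_{L^2}^2\,dt=\tfrac12\|v_0\|_{L^2}^2.
\]
Exponential decay then follows from the observability inequality
\begin{equation*}
\|v_0\|_{L^2}^2\le C\int_0^T\|D_x^{5/2}G\mathcal S(t)v_0\|_{L^2}^2\,dt ,
\end{equation*}
which combined with the identity gives $\|\mathcal S(T)\|\le\kappa<1$ and hence the decay by the semigroup property.

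The observability inequality is the main obstacle. I prove it by contradiction and compactness. Suppose there is a normalized sequence $v_{0,n}$ with observation tending to $0$. The $Z^{0,T}$ bound gives $v_n$ bounded in $L^2(0,T;X_{5/2})$, and the time derivative is bounded in $L^2(0,T;X_{-5/2})$.

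The compactness step is delicate because $y\in\mathbb R$ is unbounded and no smoothing in $y$ is available. The first thing I will try is to work fiberwise in the Fourier variable $\eta$, where the problem reduces to a one-dimensional problem on $\mathbb T$ with compact embeddings. Controlling uniformity in $\eta$ is the expected difficulty: it needs a separate treatment of large $|\eta|$ and of the pieces where the $\eta$-profile escapes, or else a localized Aubin–Lions argument in $y$ combined with translation invariance.

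In the limit one gets $D_x^{5/2}Gv=0$, so $Gv=0$, so $v=c(t)$ on $\omega$. Theorem \ref{Th:UCP-5KP} with $\varepsilon=0$ then forces $v\equiv0$, contradicting the normalization. Finally, decay in $X_s$ for $s>0$ follows from the $L^2$ decay: I interpolate the smoothing estimate \eqref{eq:apriori-limit-5KP-X} on unit time intervals (the $Z^{s,1}$ bound) with the $L^2$ decay, and apply the same reasoning to $\partial_x^{-1}v$.
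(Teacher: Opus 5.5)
Your proposal has a genuine gap exactly where you expect it. The observability inequality for $\mathcal S(t)$, on which all of \eqref{eq:decay-limit-5KP-X} rests, is not proved. You rightly observe that Aubin--Lions is unavailable, because $X_{5/2}\hookrightarrow X_0$ is not compact on $\mathbb T\times\mathbb R$: the translates $\phi(x)\chi(y-n)$ are bounded in every $X_\sigma$, tend weakly to $0$ and keep their norm. However, you only list possible remedies and carry none of them out. The paper's route does not fill this hole either. It proves uniform-in-$\varepsilon$ decay of $\mathcal S_\varepsilon$ (Proposition \ref{prop:exp-decay-5KP}) and passes to the limit along the Bona--Smith sequence, but that proof invokes precisely this false compact embedding.

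The fiberwise route is the natural one. $G$, $D_x^{5/2}$ and the flow act fiber by fiber after Fourier transform in $y$. By Plancherel, the inequality is therefore \emph{equivalent} to observability of the one-dimensional problems of Lemma \ref{lem:ucp-1d-eta} with the damping $GD_x^5G$ added, with a constant independent of $\eta$. For $|\eta|\le K$ your contradiction argument does go through, using compactness of $H^{5/2}(\mathbb T)\hookrightarrow L^2(\mathbb T)$, continuity in $\eta$, and Lemma \ref{lem:ucp-1d-eta}. As $|\eta|\to\infty$, however, $\gamma\eta^2\partial_x^{-1}$ blows up, there is no limit equation, and compactness--uniqueness gives nothing. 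A quantitative argument is needed in that regime. One option is a resolvent (Hautus-type) bound uniform in $\eta$, exploiting the separation of the frequencies $\omega_k$ of Lemma \ref{lem:ucp-1d-eta}: for large $|\eta|$, any window of length $c|\eta|$ contains at most two of them. A localized argument in $y$ with translations does not remove this difficulty. It would still have to exclude sequences that disperse in $y$ (for instance those concentrated at large $|\eta|$), which have no nonzero local limit after any translation.

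Several further points.
\begin{itemize}
\item $Gv=0$ only yields $v(x,y,t)=\int_{\mathbb T}g(\tilde x)v(\tilde x,y,t)\,d\tilde x$ on $\omega$, which depends on $(y,t)$ and not just on $t$. Theorem \ref{Th:UCP-5KP} is formulated for $c=c(t)$ (its first step uses $\int_{\mathbb R}1\,dy=\infty$), so you cannot invoke it as you do. The paper's fix is to apply unique continuation to $\partial_x v$, which has zero $x$-mean, solves the same equation and vanishes on $\omega$. In the fiberwise setting, apply Lemma \ref{lem:ucp-1d-eta} to $\partial_x\widehat v(\cdot,\eta,\cdot)$.
\item Your Bona--Smith step is more complicated than necessary. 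The bound $\|D_x^5v^{\varepsilon'}\|_{L^2(0,T;X_{s-5/2})}\le\|v^{\varepsilon'}\|_{Z^{s,T}}$ already holds uniformly by Proposition \ref{prop:uniform-Zs} at level $s$. The factor $|\varepsilon-\varepsilon'|$ alone then gives smallness, which is what the paper does; no estimate at level $s+5/2$ and no tail estimate are needed.
\item Your energy proof of uniqueness is fine for $s\ge-1/2$. For $s<-1/2$, the mollified identity leaves an order-$4$ commutator that the $Z^{s,T}$ regularity does not control uniformly as the mollification is removed. There you should argue by duality against smooth solutions of the adjoint problem. The paper's one-line appeal to \eqref{eq:apriori-limit-5KP-X} is no better, since that bound is established only for the constructed solution.
\item Your passage from $L^2$ to $X_s$ decay, by selecting good times in $L^2(n,n+1;X_{s+5/2})$, is sound. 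It is also more robust than the $\varepsilon$-uniform analytic smoothing the paper uses for $\mathcal S_\varepsilon$, whose sectoriality angle degenerates as $\varepsilon\to0$.
\end{itemize}
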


\begin{proof}  The proof is divided into three main parts: smoothing of the data and forcing, a Bona–Smith argument in $Z^{s,T}$ for the regularized flows, and the passage to the limit $\varepsilon\to0$ in both the well-posedness and the exponential bounds.
	
\medskip\noindent
	\textbf{i. Smoothing of the data and uniform bounds.}  
	\medskip
	
	Let $v_0\in X_{s,0}(\Omega)$ and $F\in L^2(0,T;X_{s-5/2}(\Omega))$.  
For $\varepsilon\in(0,1)$, define the smoothed initial data by the Gaussian cutoff in the periodic frequency $k$,
$$
\widehat{v_0^\varepsilon}(k,\eta) = e^{-\varepsilon^{1/10}k^2}\widehat{v_0}(k,\eta), \qquad (k,\eta)\in\mathbb{Z}\times\mathbb{R}.
$$
Since the multiplier acts only in $x$ and commutes with $\partial_x^{-1}$, the usual $H^s$ estimate applies to both $v_0^\varepsilon$ and $\partial_x^{-1}v_0^\varepsilon$, so that $v_0^\varepsilon\in X_{\sigma,0}(\Omega)$ for every $\sigma\in\mathbb{R}$. In particular, for $\sigma\ge0$, 
\begin{equation}\label{eq:BS-smoothing-est-5KP-X}
\varepsilon^{\sigma/10}\|v_0^\varepsilon\|_{X_{s+\sigma}}
\le C(\sigma)\|v_0\|_{X_s},
\qquad
\|v_0^\varepsilon-v_0\|_{X_s}\to0, \quad \text{as }\varepsilon\to0.
\end{equation}

Fix a decreasing sequence $\{\varepsilon_n\}\subset(0,1)$ with $\varepsilon_n\to0$ and set $v_0^n:=v_0^{\varepsilon_n}$. Then $v_0^n\to v_0$ in $X_s$. Choose $F^n\in C([0,T];X_{\infty,0}(\Omega))$ such that
\begin{equation}\label{eq:Fn-approx-X}
F^n\to F
\quad \text{in } L^2(0,T;X_{s-5/2}(\Omega)).
\end{equation}
For each $n$, let $v^n$ be the (smooth) solution of the regularized closed-loop problem
\begin{equation}\label{eq:reg-problem-5KP-X}
\left\{
\begin{aligned}
\partial_t v^n
- \beta\,\partial_x^5 v^n
+ \gamma\,\partial_x^{-1}\partial_y^2 v^n
+ G D_x^5 G v^n
+ \varepsilon_n D_x^5 v^n &= F^n,\\
v^n(0) &= v_0^n .
\end{aligned}
\right.
\end{equation}
Applying Proposition \ref{prop:propagation} together with the basic
$L^\infty_tX_s$ energy estimate yields
$$
\|v^n\|_{Z^{s,T}}
\le C(s,T)\Bigl(\|v_0^n\|_{X_s}
+ \|F^n\|_{L^2(0,T;X_{s-5/2})}\Bigr),
$$
with $C(s,T)$ independent of $n$. Using \eqref{eq:BS-smoothing-est-5KP-X} and \eqref{eq:Fn-approx-X} we obtain the uniform bound
\begin{equation}\label{eq:uniform-Z-bound-5KP-X-data}
\|v^n\|_{Z^{s,T}}
\le C(s,T)\Bigl(\|v_0\|_{X_s}
+ \|F\|_{L^2(0,T;X_{s-5/2})}\Bigr),
\qquad \forall n \in\mathbb{N}.
\end{equation}
	
\medskip\noindent
	\textbf{ii. Bona–Smith argument in $Z^{s,T}$.} 
	\medskip
	
	For $m\ge n$, set $w^{n,m}:=v^n-v^m$. Subtracting \eqref{eq:reg-problem-5KP-X} for $n$ and $m$ gives
$$
\partial_t w^{n,m}
- \beta\,\partial_x^5 w^{n,m}
+ \gamma\,\partial_x^{-1}\partial_y^2 w^{n,m}
+ G D_x^5 G w^{n,m}
+ \varepsilon_m D_x^5 w^{n,m}
= (F^n-F^m) + (\varepsilon_m-\varepsilon_n)D_x^5 v^n,
$$
with initial data
$$
w^{n,m}(0)=v_0^n-v_0^m .
$$
Viewing $\varepsilon_m D_x^5 w^{n,m}$ as part of the regularized operator and setting
\[
\widetilde{F}^{n,m}
:=F^n-F^m+(\varepsilon_m-\varepsilon_n)D_x^5 v^n ,
\]
Proposition \ref{prop:propagation} applied with $\varepsilon=\varepsilon_m$ yields
\begin{equation}\label{eq:diff-est-5KP-X}
\begin{split}
\|w^{n,m}\|_{Z^{s,T}}
\le& C(s,T)\Bigl(
\|v_0^n-v_0^m\|_{X_s}
+ \|\widetilde{F}^{n,m}\|_{L^2(0,T;X_{s-5/2})}
\Bigr)\\
\le& C(s,T)\Bigl(
\|v_0^n-v_0^m\|_{X_s}
+ \|F^n-F^m\|_{L^2(0,T;X_{s-5/2})}
\\&+ |\varepsilon_m-\varepsilon_n|
\|D_x^5 v^n\|_{L^2(0,T;X_{s-5/2})}
\Bigr).
\end{split}
\end{equation}
Since $D_x^5:X_{s+5/2}\to X_{s-5/2}$ continuously,
\begin{equation*}
\begin{split}
\|D_x^5 v^n\|_{L^2(0,T;X_{s-5/2})}\le& \|v^n\|_{L^2(0,T;X_{s+5/2})}\\
\le& \|v^n\|_{Z^{s,T}}\\
\le& C(s,T)\Bigl(\|v_0\|_{X_s}
+ \|F\|_{L^2(0,T;X_{s-5/2})}\Bigr),
\end{split}
\end{equation*}
thanks to \eqref{eq:uniform-Z-bound-5KP-X-data}. As $|\varepsilon_m-\varepsilon_n|\le\varepsilon_n$, for $m\ge n$, we have
$$
|\varepsilon_m-\varepsilon_n|
\|D_x^5 v^n\|_{L^2(0,T;X_{s-5/2})}
\le \varepsilon_n C(s,T)\Bigl(\|v_0\|_{X_s}
+ \|F\|_{L^2(0,T;X_{s-5/2})}\Bigr)\to0 .
$$
The previous estime together with $v_0^n\to v_0$ in $X_s$ and $F^n\to F$ in $L^2(0,T;X_{s-5/2})$, give that the right-hand side of \eqref{eq:diff-est-5KP-X} tends to $0$, as $m,n\to\infty$. Hence $\{v^n\}$ is Cauchy in $Z^{s,T}$, and there exists
$v\in Z^{s,T}$ such that
\begin{equation}\label{eq:vn-to-v-X}
v^n \to v, \qquad \text{strongly in } Z^{s,T}.
\end{equation}
	
\medskip\noindent
	\textbf{iii. Identification of the limit and exponential bounds.} 
	\medskip
	
	Passing to the limit in \eqref{eq:reg-problem-5KP-X}, using \eqref{eq:vn-to-v-X}, \eqref{eq:Fn-approx-X}, and the continuity of the spatial operators on $X_{s+5/2}$, shows that $v$ solves \eqref{eq:limit-closed-loop-5KP-X} with $v(0)=v_0$ in the distributional sense, hence in $Z^{s,T}$. The a priori estimate \eqref{eq:apriori-limit-5KP-X} follows from \eqref{eq:uniform-Z-bound-5KP-X-data} by letting $n\to\infty$ and using \eqref{eq:vn-to-v-X}. Uniqueness and continuous dependence on $(v_0,F)$ follow directly from \eqref{eq:apriori-limit-5KP-X}: the difference of two solutions satisfies the homogeneous equation with zero data and therefore vanishes.

Finally, assume $s\ge0$ and $F\equiv0$. For each $n$, let $\mathcal{S}_{\varepsilon_n}(t)$ be the $C_0$–semigroup on
$X_{s,0}(\Omega)$ generated by
\[
L_{\varepsilon_n}
:= -\beta\,\partial_x^5
+ \gamma\,\partial_x^{-1}\partial_y^2
+ G D_x^5 G
+ \varepsilon_n D_x^5 .
\]
By Proposition \ref{prop:exp-decay-5KP}, there exist $C=C(s)>0$ and $\lambda>0$, independent of $\varepsilon\in(0,1)$, such that
\begin{equation}\label{eq:decay-epsn-X}
\|\mathcal{S}_{\varepsilon_n}(t)v_0^n\|_{X_s}
\le C e^{-\lambda t}\|v_0^n\|_{X_s},
\qquad t\ge0 .
\end{equation}
Setting $v^n(t):=\mathcal{S}_{\varepsilon_n}(t)v_0^n$, the convergence \eqref{eq:vn-to-v-X} implies that for each $T>0$,
\begin{equation}\label{eq:strong-conv-semigroup-X}
\sup_{t\in[0,T]}\|v^n(t)-v(t)\|_{X_s}\to0 .
\end{equation}
Since $v_0^n\to v_0$ in $X_s$, passing to the limit $n\to\infty$ in
\eqref{eq:decay-epsn-X} and using \eqref{eq:strong-conv-semigroup-X}
yields
\[
\|\mathcal{S}(t)v_0\|_{X_s}
=\|v(t)\|_{X_s}
\le C e^{-\lambda t}\|v_0\|_{X_s},
\qquad t\ge0,
\]
which is \eqref{eq:decay-limit-5KP-X}. This completes the proof.
\end{proof}

\section{Nonlinear theory: Well-posedness}\label{sec:NWP}

This section develops the nonlinear framework for the well-posedness of the closed-loop KP5 equation \eqref{eq:nonlin-closed-loop-again}. We prove bilinear estimates for $u,\partial_x u$ in the KP-adapted Sobolev scale, combine them with linear semigroup estimates, and apply a contraction argument in an exponentially weighted space to obtain small-data well-posedness in $X_{s,0}(\Omega)$, $s>2$, with data-independent existence time. This result is a key step toward the stabilization and control results that follow.

\subsection{Slab spaces and bilinear estimates}

Before stating the estimates, we introduce a piece of notation used throughout this subsection. For any $f\in L^1(\mathbb{T}_x;L^2(\mathbb{R}_y))$ we denote by
\[
\Pi_0 f
:= \frac{1}{2\pi}\int_{\mathbb{T}}f(x,\cdot)\,dx
\;\in L^2(\mathbb{R}_y)
\]
the \emph{$x$-mean projection} of $f$, i.e.\ the Fourier multiplier that retains only the $k=0$ mode in $x$. For any $\sigma\in\mathbb{R}$, $\Pi_0$ is bounded on $\mathcal{H}^\sigma$ and on $X_\sigma$, and
$$
(\mathrm{Id}-\Pi_0)f
= \sum_{k\neq 0}\widehat{f}(k,\cdot)\,e^{ikx}
$$
is the mean-zero part of $f$. The operator $\partial_x^{-1}$ acts on mean-zero functions via \eqref{antDer}, and for a general $f$ one has
$$
	\partial_x^{-1}\partial_x f
	= (\mathrm{Id}-\Pi_0)f
	= f-\Pi_0 f,
	$$
since $\partial_x^{-1}\partial_x$ is the identity on the mean-zero subspace and annihilates constants in $x$. The first result of this subsection gives us the bilinear estimates and can be read as follows. 
\begin{lemma}\label{lem:bilinear-kp5-X}
	Let $s>2$ and $T>0$. There exists $C=C(s,T)>0$ such that for all
	$u\in Z^{s,T}$,
	\begin{equation}\label{eq:bilinear-kp5-X}
		\|u\,\partial_x u\|_{L^2(0,T;\,X_{s-5/2})}
		\le C\,\|u\|_{Z^{s,T}}^2.
	\end{equation}
\end{lemma}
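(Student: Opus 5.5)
We would first write $u\,\partial_x u=\frac12\partial_x(u^2)$. This splits the $X_{s-5/2}$ norm into two pieces:
\[
\|u\partial_x u\|_{X_{s-5/2}}^2=\tfrac14\|\partial_x(u^2)\|_{H^{s-5/2}}^2+\tfrac14\|\partial_x^{-1}\partial_x(u^2)\|_{H^{s-5/2}}^2 .
\]
By the identity $\partial_x^{-1}\partial_x=\mathrm{Id}-\Pi_0$ recalled just before the lemma, the second term is $\|(\mathrm{Id}-\Pi_0)(u^2)\|_{H^{s-5/2}}\le C\|u^2\|_{H^{s-5/2}}$, since $\Pi_0$ is bounded. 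The first term is bounded by $\|u^2\|_{H^{s-3/2}}$. Hence it suffices to prove, for a.e.\ $t$,
\[
\|u^2\|_{H^{s-3/2}(\Omega)}\le C\|u\|_{H^{\sigma_1}}\|u\|_{H^{\sigma_2}},
\]
with indices $\sigma_1,\sigma_2$ chosen so that one factor is measured in $L^\infty_tX_s$ and the other in $L^2_tX_{s+5/2}$. Then Hölder in time, $\|fg\|_{L^2_t}\le\|f\|_{L^\infty_t}\|g\|_{L^2_t}$, gives the bound $C\|u\|_{Z^{s,T}}^2$. The constant depends on $T$ only through the possible use of $\|u\|_{L^2_tX_s}\le T^{1/2}\|u\|_{L^\infty_tX_s}$.

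For the spatial product estimate we would use the tame algebra estimate in dimension two. For $r\ge0$,
\[
\|fg\|_{H^r}\le C\bigl(\|f\|_{H^r}\|g\|_{L^\infty}+\|f\|_{L^\infty}\|g\|_{H^r}\bigr),
\]
together with the embedding $H^{\sigma}(\Omega)\hookrightarrow L^\infty(\Omega)$ for $\sigma>1$, which holds on $\mathbb{T}\times\mathbb{R}$ as on $\mathbb{R}^2$ via the Fourier characterization. Take $r=s-3/2>1/2$ and $f=g=u$. Since $s>2>1$, we have $\|u\|_{L^\infty}\le C\|u\|_{H^s}$, while $\|u\|_{H^{s-3/2}}\le\|u\|_{H^{s+5/2}}$. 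This yields $\|u^2\|_{H^{s-3/2}}\le C\|u\|_{H^s}\|u\|_{H^{s+5/2}}$. It is in fact an overkill split, and one could even just use that $H^{s-3/2}$ is an algebra when $s-3/2>1$. The threshold $s>2$, equivalently $s-5/2>-1/2$, is what the authors' remark attributes to this product structure. In our route it is used only to guarantee $r=s-3/2\ge0$ and the $L^\infty$ embedding, so we may get more room than needed.

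The main subtlety is the meaning of the anisotropic norm. $X_{s+5/2}$ controls $H^{s+5/2}(\Omega)$ in both variables by definition, since $X_\sigma\subset H^\sigma$. So, unlike the $x$-only gain of Proposition \ref{prop:propagation}, the full isotropic Sobolev space is available, and the product estimate is the standard one. We would double-check that $Z^{s,T}$ is defined with the full $H^{s+5/2}$ norm, and indeed \eqref{eq:Z-norm-5KP} uses $X_{s+5/2}$. If only $x$-regularity were available, we would instead prove the product estimate fiberwise: in $x$, use $H^{\sigma}(\mathbb{T})$ with $\sigma>1/2$, and in $y$, use $L^2\cdot L^\infty\subset L^2$ with $L^\infty_y$ controlled by $H^s_y$, $s>1/2$. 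The same Hölder-in-time closing argument would then apply.
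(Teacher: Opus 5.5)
Your proof is correct, and it takes a different route from the paper's.

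The paper works throughout in the $x$-only scale $\mathcal{H}^\sigma=H^\sigma_xL^2_y$. It estimates $u\,\partial_xu$ directly at the index $s-5/2$, using the embedding $\mathcal{H}^\alpha\hookrightarrow L^\infty_xL^2_y$ for $\alpha>1/2$ and a tame bound $\|fg\|_{\mathcal{H}^\sigma}\lesssim\|f\|_{L^\infty_xL^2_y}\|g\|_{\mathcal{H}^\sigma}+\|g\|_{L^\infty_xL^2_y}\|f\|_{\mathcal{H}^\sigma}$ for $\sigma>-1/2$; this is where its restriction $s>2$ comes from. It invokes $\partial_x^{-1}(u\,\partial_xu)=\frac12(u^2-\Pi_0(u^2))$ only for the second component.

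You use the divergence form for both components, which moves everything to the positive index $s-3/2$. You also exploit that $X_\sigma$, as defined, controls the full isotropic $H^\sigma(\Omega)$ norm. The standard Kato--Ponce estimate together with $H^s(\Omega)\hookrightarrow L^\infty(\Omega)$ then closes the bound with no negative-index products and a $T$-independent constant. In fact your argument works for every $s\ge 3/2$.

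The paper's route aims to need only $x$-regularity, matching the $D_x^{5/2}$ smoothing produced by $GD_x^5G$, but that cannot succeed. Take $u=\cos x\,\psi(y)$ with $\psi\in L^2(\mathbb{R})\setminus L^4(\mathbb{R})$. Then $u$ lies in every $\mathcal{H}^\sigma$, while $u\,\partial_xu=-\frac12\sin(2x)\,\psi(y)^2$ lies in none. So both the quoted $\mathcal{H}^\sigma$ product estimate and the $x$-only version of the lemma fail.

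For the same reason, your fallback paragraph does not work as written: with only $x$-regularity there is no $H^s_y$ norm available to control $L^\infty_y$. The isotropic $H^{s+5/2}(\Omega)$ component of the $Z^{s,T}$ norm is therefore essential to your proof, not a convenience. The price is that the linear estimates feeding $Z^{s,T}$ must genuinely deliver regularity in $y$ as well as in $x$.
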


\begin{proof}
Recall that 
\[
\|f\|_{X_{s-5/2}}^2
= \|f\|_{\mathcal{H}^{s-5/2}}^2
+ \|\partial_x^{-1}f\|_{\mathcal{H}^{s-5/2}}^2.
\]
We treat these two terms separately using standard tools. First, the one-dimensional Sobolev embedding on $\mathbb{T}$ with values in $L^2(\mathbb{R}_y)$: for any $\alpha>\tfrac{1}{2}$,
\begin{equation}\label{eq:embed-x-Linfty}
\|f\|_{L_x^\infty L_y^2}
\lesssim \|f\|_{\mathcal{H}^{\alpha}}.
\end{equation}
Second, the product estimate in anisotropic Sobolev spaces: for $\sigma>-\tfrac{1}{2}$,
\begin{equation}\label{eq:tame-H-X}
\|fg\|_{\mathcal{H}^{\sigma}}
\lesssim
\|f\|_{L_x^\infty L_y^2}\,\|g\|_{\mathcal{H}^{\sigma}}
+ \|g\|_{L_x^\infty L_y^2}\,\|f\|_{\mathcal{H}^{\sigma}}.
\end{equation}
Since $s>2$ implies $\sigma=s-\tfrac{5}{2}>-\tfrac{1}{2}$, \eqref{eq:tame-H-X} applies.

\medskip

\noindent\textbf{Estimate in $\mathcal{H}^{s-5/2}$.}
Applying \eqref{eq:tame-H-X} with $f=u$, $g=\partial_x u$, we obtain
\[
\|u\,\partial_x u\|_{\mathcal{H}^{s-5/2}}
\lesssim
\|u\|_{L_x^\infty L_y^2}\,
\|\partial_x u\|_{\mathcal{H}^{s-5/2}}
+ \|\partial_x u\|_{L_x^\infty L_y^2}\,
\|u\|_{\mathcal{H}^{s-5/2}}.
\]
By \eqref{eq:embed-x-Linfty} and $s>2$,
\[
\|u\|_{L_x^\infty L_y^2}\lesssim \|u\|_{\mathcal{H}^{s-3/2}},
\qquad
\|\partial_x u\|_{L_x^\infty L_y^2}
\lesssim \|u\|_{\mathcal{H}^{s-1/2}}.
\]
Using $\partial_x:\mathcal{H}^{s-3/2}\to\mathcal{H}^{s-5/2}$ and the embeddings $\mathcal{H}^s\hookrightarrow\mathcal{H}^\theta$,
\[
\|u\,\partial_x u\|_{\mathcal{H}^{s-5/2}}
\lesssim
\|u\|_{\mathcal{H}^{s-3/2}}^2
+ \|u\|_{\mathcal{H}^{s-1/2}}\|u\|_{\mathcal{H}^{s-5/2}}
\lesssim \|u\|_{\mathcal{H}^s}^2.
\]
Integrating in time,
\begin{equation}\label{eq:bilin-H-part}
\|u\,\partial_x u\|_{L^2(0,T;\mathcal{H}^{s-5/2})}
\le C(s,T)\,
\|u\|_{L^\infty(0,T;\mathcal{H}^s)}\,
\|u\|_{L^2(0,T;\mathcal{H}^s)}
\le C(s,T)\,\|u\|_{Z^{s,T}}^2.
\end{equation}

\medskip

\noindent\textbf{Estimate of the $\partial_x^{-1}$-term.}
Since $u$ has zero mean in $x$, $u\,\partial_x u=\tfrac{1}{2}\partial_x(u^2)$, hence
\begin{equation}\label{eq:dxinv-uux}
\partial_x^{-1}(u\,\partial_x u)
= \tfrac{1}{2}\bigl(u^2-\Pi_0(u^2)\bigr).
\end{equation}
As $\Pi_0$ is a contraction on $\mathcal{H}^{s-5/2}$, it suffices to bound $u^2$. By \eqref{eq:tame-H-X} and \eqref{eq:embed-x-Linfty},
\[
\|u^2\|_{\mathcal{H}^{s-5/2}}
\lesssim
\|u\|_{L_x^\infty L_y^2}\,
\|u\|_{\mathcal{H}^{s-5/2}}
\lesssim \|u\|_{\mathcal{H}^s}^2.
\]
Thus,
\begin{equation}\label{eq:bilin-dxinv-part}
\|\partial_x^{-1}(u\,\partial_x u)\|_{L^2(0,T;\mathcal{H}^{s-5/2})}
\le C(s,T)\,\|u\|_{Z^{s,T}}^2.
\end{equation}

\medskip

Combining \eqref{eq:bilin-H-part}-\eqref{eq:bilin-dxinv-part}, we conclude
\[
\|u\,\partial_x u\|_{L^2(0,T;X_{s-5/2})}
\le C(s,T)\,\|u\|_{Z^{s,T}}^2,
\]
which is \eqref{eq:bilinear-kp5-X}.
\end{proof}

Let us define the slab space. To do that that, for each $n\in\mathbb{N}_0$ consider the space given by
\begin{align}\label{eq:slab-space-def}
	Z^{s,[n,n+1]}
	&:= C\bigl([n,n+1]; X_{s,0}(\Omega)\bigr)
	\cap L^2\bigl(n,n+1; X_{s+5/2,0}(\Omega)\bigr),
	\end{align}
endowed with the norm
	\begin{align}\label{eq:slab-norm-def}
	\vertiii{u}_n
	&:= \|u\|_{L^\infty(n,n+1;X_s)}
	+ \|u\|_{L^2(n,n+1;X_{s+5/2})}.
\end{align}
Now, on each unit interval, we also need a localized difference estimate, given by the following result.

\begin{lemma} \label{lem:nonlinear-est}  Let $s>2$ and $k\in\mathbb{N}$.  For $u,v\in Z^{s,[k-1,k]}$,
	\begin{equation}\label{eq:nonlinear-diff-est}
		\|u\,\partial_x u - v\,\partial_x v\|_{L^2(k-1,k;\,X_{s-5/2})}
		\le C\bigl(\vertiii{u}_{k-1}+\vertiii{v}_{k-1}\bigr)
		\vertiii{u-v}_{k-1},
	\end{equation}
	for some constant $C>0$ depending only on $s$.
\end{lemma}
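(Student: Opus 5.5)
The plan is to reduce \eqref{eq:nonlinear-diff-est} to the bilinear machinery behind Lemma \ref{lem:bilinear-kp5-X}, applied on the unit slab $[k-1,k]$ instead of $[0,T]$. Since the nonlinearity is quadratic, I start from the algebraic identity
\[
u\,\partial_x u - v\,\partial_x v
= \tfrac12\,\partial_x\bigl((u+v)(u-v)\bigr)
= (u-v)\,\partial_x u + v\,\partial_x (u-v).
\]
This isolates one factor $w:=u-v$ in each term. It therefore suffices to prove the \emph{polarized} bilinear bound
\[
\|f\,\partial_x g\|_{L^2(k-1,k;X_{s-5/2})}\le C(s)\,\vertiii{f}_{k-1}\vertiii{g}_{k-1},
\]
together with its symmetric counterpart for $\tfrac12\partial_x(fg)$, for $f,g\in Z^{s,[k-1,k]}$. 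Here $C(s)$ must not depend on $k$. This holds because the slab has length one and the estimates are translation invariant in time.

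For the $\mathcal{H}^{s-5/2}$ component, I apply the tame product estimate \eqref{eq:tame-H-X} with $\sigma=s-5/2>-1/2$, which uses $s>2$, together with the embedding \eqref{eq:embed-x-Linfty}. This gives
\[
\|f\,\partial_x g\|_{\mathcal{H}^{s-5/2}}\lesssim \|f\|_{\mathcal{H}^{s-3/2}}\|g\|_{\mathcal{H}^{s-3/2}}+\|g\|_{\mathcal{H}^{s-1/2}}\|f\|_{\mathcal{H}^{s-5/2}}\lesssim \|f\|_{\mathcal{H}^s}\|g\|_{\mathcal{H}^s}.
\]
Taking $L^2$ in time over $(k-1,k)$, I put one factor in $L^\infty_t$ and the other in $L^2_t$. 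Since the slab has unit length, $\|g\|_{L^2(k-1,k;\mathcal{H}^s)}\le\|g\|_{L^\infty(k-1,k;\mathcal{H}^s)}$, so the product is bounded by $\vertiii{f}_{k-1}\vertiii{g}_{k-1}$.

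For the $\partial_x^{-1}$ component, I use the symmetric form instead of the product form. Writing $u\partial_xu-v\partial_xv=\tfrac12\partial_x\bigl((u+v)w\bigr)$ gives
\[
\partial_x^{-1}\bigl(u\partial_x u-v\partial_x v\bigr)=\tfrac12(\mathrm{Id}-\Pi_0)\bigl((u+v)w\bigr).
\]
I then bound $\|(u+v)w\|_{\mathcal{H}^{s-5/2}}$ by \eqref{eq:tame-H-X} and \eqref{eq:embed-x-Linfty}, exactly as in the proof of \eqref{eq:bilin-dxinv-part}, by $\|u+v\|_{\mathcal{H}^s}\|w\|_{\mathcal{H}^s}$. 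Integrating in time as before and using the triangle inequality $\vertiii{u+v}_{k-1}\le\vertiii{u}_{k-1}+\vertiii{v}_{k-1}$ then yields \eqref{eq:nonlinear-diff-est}.

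The main obstacle is the justification of the product estimate \eqref{eq:tame-H-X} in the anisotropic space $L^2_y$. In $y$ there is no algebra property: a product of two $L^2_y$ functions lies only in $L^1_y$. I would handle this by working fiberwise in $y$ and relying on \eqref{eq:embed-x-Linfty} as the paper does, i.e. treating \eqref{eq:tame-H-X} as given from Lemma \ref{lem:bilinear-kp5-X}. If I needed a more robust route, I would replace $L^\infty_xL^2_y$ by $L^\infty_{x,y}$. That bound follows from the $H^s(\Omega)$ part of the $X_s$ norm, since $s>2>1$ gives Sobolev embedding in two dimensions.
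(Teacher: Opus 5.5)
Your argument is the paper's argument: the same splitting $(u-v)\partial_x u+v\,\partial_x(u-v)$, the same pointwise-in-time product bound via \eqref{eq:tame-H-X}--\eqref{eq:embed-x-Linfty}, and the same H\"older step on the unit slab. For the $\partial_x^{-1}$ part you use the symmetric form $\partial_x^{-1}(u\partial_x u-v\partial_x v)=\tfrac12(\mathrm{Id}-\Pi_0)\bigl((u+v)(u-v)\bigr)$. This is the right way to make precise the paper's remark that this part ``follows identically'', because the two pieces of the splitting are not separately $x$-mean-zero.

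The step you flag is a genuine weak point, and the paper's proof has it too. Suppose the $X_\sigma$ norms are read through the anisotropic $\mathcal{H}^\sigma$ scale, as in the proof of Lemma \ref{lem:bilinear-kp5-X}. Then not only \eqref{eq:tame-H-X} but the bound itself fails. Take $u=2\cos x\,\phi(y)$ with $\phi\in L^2(\mathbb{R})\setminus L^4(\mathbb{R})$: $u$ and $\partial_x^{-1}u$ lie in every $\mathcal{H}^\sigma$, while $u\,\partial_x u=-2\sin(2x)\,\phi(y)^2$ lies in none. So the isotropic $H^s(\Omega)$ part of the $X_s$ norm must be used, as in your fallback. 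However, your fallback still has a hole. The $L^\infty_{x,y}$ version of \eqref{eq:tame-H-X} is false at the negative index $s-5/2$, which is the case when $2<s<5/2$. Test it with $f=g=\cos(Nx)\phi(y)$, $\phi$ Schwartz, $N\to\infty$: the $x$-mean $\tfrac12\phi^2$ of $f^2$ keeps $\|f^2\|_{H^{s-5/2}}$ bounded below, while the right-hand side is $O(N^{s-5/2})$.

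The simplest repair avoids negative indices altogether. Since $s-1>1=\tfrac12\dim\Omega$, both $H^{s-1}(\Omega)$ and $H^{s}(\Omega)$ are algebras, so
\[
\|f\,\partial_x g\|_{H^{s-5/2}}\le\|f\,\partial_x g\|_{H^{s-1}}\lesssim\|f\|_{H^{s}}\|g\|_{H^{s}},\qquad \|(u+v)(u-v)\|_{H^{s-5/2}}\le\|(u+v)(u-v)\|_{H^{s}}\lesssim\|u+v\|_{H^{s}}\|u-v\|_{H^{s}}.
\]
In addition, $\Pi_0$ is bounded on $H^{s-5/2}(\Omega)$. With these pointwise bounds, your time integration on $[k-1,k]$ goes through unchanged.
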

\begin{proof}
Factor the difference as
\begin{equation}\label{eq:diff-factor}
u\,\partial_x u - v\,\partial_x v
= (u-v)\,\partial_x u + v\,\partial_x(u-v).
\end{equation}
We estimate each term in $X_{s-5/2}$ pointwise in time and then integrate over $[k-1,k]$. First, thanks to the Kato–Ponce argument, as used in Lemma \ref{lem:bilinear-kp5-X}, we have that
\begin{align*}
\|(u-v)\,\partial_x u\|_{X_{s-5/2}}
&\lesssim \|u-v\|_{X_s}\,\|u\|_{X_s},
\end{align*}
and
\begin{align*}
\|v\,\partial_x(u-v)\|_{X_{s-5/2}}
&\lesssim \|v\|_{X_s}\,\|u-v\|_{X_s}.
\end{align*}
Hence
\begin{equation}\label{eq:pointwise-diff}
\|u\,\partial_x u - v\,\partial_x v\|_{X_{s-5/2}}
\lesssim
\bigl(\|u\|_{X_s}+\|v\|_{X_s}\bigr)\,\|u-v\|_{X_s}.
\end{equation}
Note that, the estimate for $\partial_x^{-1}$ follows identically using \eqref{eq:dxinv-uux} and \eqref{eq:diff-factor}.

Now, integrating \eqref{eq:pointwise-diff} over $[k-1,k]$ and using H\"older, yields that
\[
\|u\,\partial_x u - v\,\partial_x v\|_{L^2(k-1,k;X_{s-5/2})}
\lesssim
\bigl(\|u\|_{L^\infty(k-1,k;X_s)}+\|v\|_{L^\infty(k-1,k;X_s)}\bigr)
\|u-v\|_{L^2(k-1,k;X_s)}.
\]
By definition of $\vertiii{\cdot}_{k-1}$, \eqref{eq:nonlinear-diff-est} holds true.
\end{proof}

To exploit the uniform exponential decay of the linear semigroup (Proposition \ref{prop:BS-limit-5KP-X}), we work on unit time slabs with smoothing index $5/2$, consistent with the feedback $G D_x^5 G$. 

For each $n\in\mathbb{N}_0$, consider the slab space \eqref{eq:slab-space-def} endowed with the norm \eqref{eq:slab-norm-def}, and let ${\mathcal{S}(t)}{t\ge0}$ be the $C_0$-semigroup on $X_{s,0}(\Omega)$ associated with \eqref{eq:limit-closed-loop-5KP-X} for $F\equiv0$. By Proposition \ref{prop:BS-limit-5KP-X}, $\mathcal{S}(t)$ satisfies the decay estimate \eqref{eq:decay-limit-5KP-X}, while solutions to the inhomogeneous problem obey the a priori bound \eqref{eq:apriori-limit-5KP-X}. These properties yield the following slabwise estimates.

\begin{proposition}\label{prop:slab-est-kp5}
Let $s\ge0$ and let $\{\mathcal{S}(t)\}_{t\ge0}$ be the linear semigroup associated with \eqref{eq:limit-closed-loop-5KP-X} in the homogeneous case $F\equiv0$. Then there exist $\lambda>0$ and constants $\tilde c_0,\tilde c_1>0$, independent of $s$, $n$, and $t$, such that for all $n\in\mathbb{N}_0$ and $t\in[n,n+1]$ the following hold:
\begin{itemize}
    \item[(i)] (\textit{Homogeneous estimate}) For every $v_0\in X_{s,0}(\Omega)$,
    \begin{equation}\label{eq:slab-hom-kp5}
        \vertiii{\mathcal{S}(t)v_0}_n
        \le \tilde c_0\,e^{-\lambda n}\,\|v_0\|_{X_s}.
    \end{equation}
    
    \item[(ii)] (\textit{Inhomogeneous estimate}) For every $F\in L^2_{\mathrm{loc}}(\mathbb{R}_+;X_{s-5/2,0}(\Omega))$,
  	\begin{equation}\label{eq:slab-inhom-kp5}
		\vertiii{\int_0^t \mathcal{S}(t-t')F(t')\,dt'}_n
		\le \tilde c_1\biggl(
		\|F\|_{L^2(n,n+1;X_{s-5/2})}
		+\sum_{k=1}^{n}
		e^{-\lambda(n-k)}\|F\|_{L^2(k-1,k;X_{s-5/2})}
		\biggr).
		\end{equation}
\end{itemize}
\end{proposition}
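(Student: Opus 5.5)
The plan is to combine the exponential decay \eqref{eq:decay-limit-5KP-X} with the unit-time a priori bound \eqref{eq:apriori-limit-5KP-X} of Proposition \ref{prop:BS-limit-5KP-X}, applied with $T=1$. The constants will be $\tilde c_0,\tilde c_1$ built from $C(s,1)$ and $C(s)$.

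For (i), fix $n$ and write $\mathcal{S}(t)v_0=\mathcal{S}(t-n)\mathcal{S}(n)v_0$ for $t\in[n,n+1]$. Then $w(\tau):=\mathcal{S}(\tau)\mathcal{S}(n)v_0$, $\tau\in[0,1]$, solves the homogeneous problem \eqref{eq:limit-closed-loop-5KP-X} with datum $\mathcal{S}(n)v_0\in X_{s,0}$. The a priori estimate with $T=1$ gives
\[
\vertiii{\mathcal{S}(\cdot)v_0}_n=\|w\|_{Z^{s,1}}\le C(s,1)\|\mathcal{S}(n)v_0\|_{X_s}.
\]
This is at most $C(s,1)Ce^{-\lambda n}\|v_0\|_{X_s}$ by \eqref{eq:decay-limit-5KP-X}, which proves \eqref{eq:slab-hom-kp5}. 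Uniqueness in Proposition \ref{prop:BS-limit-5KP-X} justifies identifying $w$ with the semigroup orbit; time-translation invariance of the autonomous equation handles the shift.

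For (ii), split $\int_0^t=\sum_{k=1}^{n}\int_{k-1}^{k}+\int_n^t$. The last piece is the Duhamel solution on $[n,n+1]$ with zero datum and forcing $F\mathbf 1_{[n,n+1]}$. By \eqref{eq:apriori-limit-5KP-X} on an interval of length one, it is bounded in $\vertiii{\cdot}_n$ by $C(s,1)\|F\|_{L^2(n,n+1;X_{s-5/2})}$. For $k\le n$, set $\Phi_k(t):=\int_{k-1}^{k}\mathcal{S}(t-t')F(t')\,dt'$. For $t\ge k$ this equals $\mathcal{S}(t-k)\Phi_k(k)$, and $\Phi_k(k)$ is the value at time $1$ of the zero-datum Duhamel solution on $[k-1,k]$. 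The $L^\infty_tX_s$ part of \eqref{eq:apriori-limit-5KP-X} then gives $\|\Phi_k(k)\|_{X_s}\le C(s,1)\|F\|_{L^2(k-1,k;X_{s-5/2})}$. Applying part (i) with initial time $k$ to the datum $\Phi_k(k)$ gives
\[
\vertiii{\Phi_k}_n\le\tilde c_0e^{-\lambda(n-k)}C(s,1)\|F\|_{L^2(k-1,k;X_{s-5/2})}.
\]
Summing over $k$ with the triangle inequality yields \eqref{eq:slab-inhom-kp5}.

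The main obstacle is not the slab bookkeeping. It is that the Duhamel formula $\int\mathcal{S}(t-t')F\,dt'$ with $F\in L^2(X_{s-5/2})$ has to be identified with the solution produced in Proposition \ref{prop:BS-limit-5KP-X}, since $F$ is not in $X_s$. I would first do this for smooth $F$, where the identification is classical, and then extend by density using the a priori estimate. A second caveat is the claimed independence of the constants from $s$: as written, the argument only yields $\tilde c_0,\tilde c_1$ depending on $s$ through $C(s,1)$ and $C(s)$. I would state the result with that dependence, or fix $s$ throughout.
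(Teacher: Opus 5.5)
Your proof is correct, and part (i) is exactly the paper's argument. In part (ii) you use the same slab bookkeeping, but you treat the contribution of the past slabs differently, and the difference matters.

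The paper applies \eqref{eq:apriori-limit-5KP-X} on $[n,n+1]$ with datum $w(n)$. It then bounds $\|w(n)\|_{X_s}$ by applying the decay estimate pointwise in $t'$, obtaining $\|\mathcal{S}(n-t')F(t')\|_{X_s}\le Ce^{-\lambda(n-k)}\|F(t')\|_{X_s}$. After that it silently replaces $\|F(t')\|_{X_s}$ by $\|F\|_{L^2(k-1,k;X_{s-5/2})}$. Nothing established earlier justifies that replacement:
\begin{itemize}
\item \eqref{eq:decay-limit-5KP-X} maps $X_s$ to $X_s$ with no pointwise-in-time gain of derivatives;
\item $F(t')$ need not lie in $X_s$ at all;
\item for $k=n$ there is not even a positive time gap.
\end{itemize}

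The only smoothing available is the time-integrated gain in \eqref{eq:apriori-limit-5KP-X}. Your factorization $\Phi_k(t)=\mathcal{S}(t-k)\Phi_k(k)$ uses it exactly where it is needed. You take $\|\Phi_k(k)\|_{X_s}\le C(s,1)\|F\|_{L^2(k-1,k;X_{s-5/2})}$ from the $L^\infty_tX_s$ part of that estimate on $[k-1,k]$, and only then apply the decay. The same fix can be inserted into the paper's structure by writing $w(n)=\sum_{k=1}^{n}\mathcal{S}(n-k)\Phi_k(k)$.

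Your two caveats are also accurate, and the paper does not address them. First, the Duhamel integral with $F\in L^2(0,T;X_{s-5/2})$ has to be identified with the solution of Proposition~\ref{prop:BS-limit-5KP-X}, and your density argument does this. Second, the constants depend on $s$: the paper's own proof sets $\tilde c_0:=C(s,1)C$. This is harmless because $s$ is fixed in the subsequent contraction argument.
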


\begin{proof}  Fix $n\ge0$ and $t\in[n,n+1]$. By the semigroup property,
	\[
	\mathcal{S}(t)v_0=\mathcal{S}(t-n)\mathcal{S}(n)v_0.
	\]
	The exponential decay \eqref{eq:decay-limit-5KP-X} gives
	\[
	\|\mathcal{S}(n)v_0\|_{X_s}
	\le C\,e^{-\lambda n}\|v_0\|_{X_s}.
	\]
	On the unit interval $[n,n+1]$, the function $v(\tau):=\mathcal{S}(\tau)v_0$ solves the homogeneous problem 	\eqref{eq:limit-closed-loop-5KP-X} with initial data  $v(n)=\mathcal{S}(n)v_0$. Applying the linear estimate \eqref{eq:apriori-limit-5KP-X} on $[n,n+1]$ yields
	\[
	\vertiii{v}_n
	\le C(s,1)\,\|v(n)\|_{X_s}
	\le C(s,1)\,C\,e^{-\lambda n}\|v_0\|_{X_s}.
	\]
	Setting $\tilde c_0:=C(s,1)\,C$, \eqref{eq:slab-hom-kp5} holds.
	
	\medskip To prove \eqref{eq:slab-inhom-kp5}, define the Duhamel integral
	\[
	w(t):=\int_0^t \mathcal{S}(t-t')F(t')\,dt'.
	\]
	Fix $n\ge0$ and $t\in[n,n+1]$. By the semigroup property,
	\[
	w(t)
	= \mathcal{S}(t-n)\,w(n)
	+ \int_n^t \mathcal{S}(t-t')F(t')\,dt'.
	\]
	On $[n,n+1]$, the function $w$ solves the inhomogeneous problem \eqref{eq:limit-closed-loop-5KP-X} with initial data $w(n)$ and forcing $F\,\mathbf{1}_{[n,n+1]}$. The linear estimate \eqref{eq:apriori-limit-5KP-X} on this unit interval gives
	\begin{equation}\label{eq:inhom-slab-intermediate}
		\vertiii{w}_n
		\le C(s,1)\Bigl(
		\|w(n)\|_{X_s}
		+ \|F\|_{L^2(n,n+1;X_{s-5/2})}
		\Bigr).
	\end{equation}
	To estimate $\|w(n)\|_{X_s}$, we split the Duhamel integral over unit slabs:
	\[
	w(n)
	= \int_0^n \mathcal{S}(n-t')F(t')\,dt'
	= \sum_{k=1}^n \int_{k-1}^k
	\mathcal{S}(n-t')F(t')\,dt'.
	\]
	For $t'\in[k-1,k]$ we have $n-t'\ge n-k$, so the exponential decay \eqref{eq:decay-limit-5KP-X} yields
	\[
	\|\mathcal{S}(n-t')F(t')\|_{X_s}
	\le C\,e^{-\lambda(n-k)}\|F(t')\|_{X_s}.
	\]
	Integrating over $[k-1,k]$ and summing in $k$,
	\begin{equation}\label{eq:wn-bound}
		\|w(n)\|_{X_s}
		\le C\sum_{k=1}^n
		e^{-\lambda(n-k)}\|F\|_{L^2(k-1,k;X_{s-5/2})}.
	\end{equation}
	Inserting \eqref{eq:wn-bound} into \eqref{eq:inhom-slab-intermediate} and absorbing the constants into $\tilde c_1$ gives \eqref{eq:slab-inhom-kp5}.
\end{proof}

\subsection{Nonlinear system: Global well-posedness}
We combine the slab estimates of Proposition \ref{prop:slab-est-kp5} with the bilinear bounds of Lemmas \ref{lem:bilinear-kp5-X} and \ref{lem:nonlinear-est} to obtain global well-posedness for the nonlinear closed-loop KP5 equation under a smallness condition on the initial data, independent of the time horizon. 

Let $\mathcal{S}(t)$ be the semigroup associated with \eqref{eq:limit-closed-loop-5KP-X} in the homogeneous case $F\equiv0$, as given by Proposition \ref{prop:BS-limit-5KP-X}. Then the nonlinear equation \eqref{eq:nonlin-closed-loop-again} admits the mild formulation
$$
	u(t)
	= \mathcal{S}(t)u_0
	- \int_0^t \mathcal{S}(t-\tau)
	\bigl(u\,\partial_x u\bigr)(\tau)\,d\tau,
$$
which motivates the nonlinear map
\begin{equation}\label{eq:Gamma-def}
	\Gamma(u)(t)
	:= \mathcal{S}(t)u_0
	- \int_0^t \mathcal{S}(t-\tau)
	\bigl(u\,\partial_x u\bigr)(\tau)\,d\tau.
\end{equation}
We introduce the global space
$$
	\mathcal{X}
	:= \Bigl\{
	u\in C(\mathbb{R}_+; X_{s,0}(\Omega))
	\cap L^2_{\mathrm{loc}}(\mathbb{R}_+; X_{s+5/2,0}(\Omega))
	:\;
	\|u\|_{\mathcal{X}}<\infty
	\Bigr\},
$$
endowed with the exponentially weighted norm
$$
	\|u\|_{\mathcal{X}}
	:= \sup_{n\ge0}\,e^{\lambda n}\vertiii{u}_n,
$$
where $\lambda>0$ is the decay rate appearing in \eqref{eq:slab-hom-kp5} and $\vertiii{\cdot}_n$ is the slab norm
\eqref{eq:slab-norm-def}. Now, we are in a position to give the main result of this section.

\begin{theorem}\label{thm:gwp-kp5-X} Let $s>2$. There exists $\rho>0$ such that for any $u_0\in X_{s,0}(\Omega)$ with $\|u_0\|_{X_s}\le \rho,$ the nonlinear closed-loop KP5 equation
	\begin{equation}\label{eq:nonlin-closed-loop-again}
	\begin{cases}
		\partial_t u
		+ \beta\,\partial_x^5 u
		+ u\,\partial_x u
		+ \gamma\,\partial_x^{-1}\partial_y^{2}u
		+ G D_x^{5}G u
		= 0,\\
           u(0)=u_0,
           \end{cases}
	\end{equation}
	admits a unique global solution
	\[
	u\in C\bigl([0,\infty);X_{s,0}(\Omega)\bigr)
	\cap L^2_{\mathrm{loc}}\bigl([0,\infty);
	X_{s+5/2,0}(\Omega)\bigr).
	\]
Moreover, the data-to-solution map $u_0\mapsto u$ is locally Lipschitz from $\{u_0\in X_{s,0}(\Omega):\|u_0\|_{X_s}\le\rho\}$ into $\mathcal{X}$.
\end{theorem}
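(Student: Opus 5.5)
The plan is a contraction argument for the map $\Gamma$ defined in \eqref{eq:Gamma-def}, posed on a closed ball $B_R:=\{u\in\mathcal{X}:\|u\|_{\mathcal{X}}\le R\}$. The parameter $R$ will be chosen proportional to $\rho$. To keep the weights summable, I take the weight in $\|\cdot\|_{\mathcal{X}}$ to be $e^{\lambda' n}$ with $0<\lambda'<\lambda$, for instance $\lambda'=\lambda/2$, where $\lambda$ is the rate in Proposition \ref{prop:slab-est-kp5}. This choice is forced by the estimate in the third step. Such a weight still yields exponential decay, and since $\mathcal{X}$ with this norm is complete, the Banach fixed point theorem will apply.

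\emph{Linear part.} By Proposition \ref{prop:slab-est-kp5}(i),
\[
e^{\lambda' n}\vertiii{\mathcal{S}(\cdot)u_0}_n\le \tilde c_0\,e^{-(\lambda-\lambda')n}\|u_0\|_{X_s}\le \tilde c_0\|u_0\|_{X_s}.
\]

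\emph{Nonlinear part.} I apply Proposition \ref{prop:slab-est-kp5}(ii) with $F=u\,\partial_x u$. The slab-localized bilinear bound comes from Lemma \ref{lem:nonlinear-est} with $v=0$, or equivalently from Lemma \ref{lem:bilinear-kp5-X} on a unit interval:
\[
\|u\partial_x u\|_{L^2(k-1,k;X_{s-5/2})}\le C\vertiii{u}_{k-1}^2\le C e^{-2\lambda'(k-1)}\|u\|_{\mathcal{X}}^2 .
\]
Substituting this into \eqref{eq:slab-inhom-kp5} and multiplying by $e^{\lambda' n}$, the quantity to bound uniformly in $n$ is
\[
e^{\lambda' n}\Bigl(e^{-2\lambda' n}+\sum_{k=1}^n e^{-\lambda(n-k)}e^{-2\lambda'(k-1)}\Bigr)\|u\|_{\mathcal{X}}^2 .
\]
With $\lambda'\le\lambda/2$ the sum is at most $C\,n\,e^{-2\lambda' n}$, possibly up to a factor $e^{2\lambda'}$. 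After multiplying by $e^{\lambda' n}$ this is bounded by $C\,n e^{-\lambda' n}$, which is bounded in $n$. Hence
\[
\|\Gamma u\|_{\mathcal{X}}\le \tilde c_0\|u_0\|_{X_s}+C_1\|u\|_{\mathcal{X}}^2 .
\]
Running the same computation with Lemma \ref{lem:nonlinear-est} applied to differences gives
\[
\|\Gamma u-\Gamma v\|_{\mathcal{X}}\le C_1\bigl(\|u\|_{\mathcal{X}}+\|v\|_{\mathcal{X}}\bigr)\|u-v\|_{\mathcal{X}} .
\]
The same argument checks that $\Gamma u$ lies in $C(\mathbb{R}_+;X_{s,0})\cap L^2_{\rm loc}(X_{s+5/2,0})$, since on each slab it is the $Z^{s,[n,n+1]}$ solution given by Proposition \ref{prop:BS-limit-5KP-X}. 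The $x$-mean-zero property is preserved because $u\partial_x u=\tfrac12\partial_x(u^2)$ has zero $x$-mean.

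\emph{Closing the argument.} Take $R=2\tilde c_0\rho$ and choose $\rho$ so small that $4C_1R\le 1$. Then $\Gamma$ maps $B_R$ into itself and is a contraction with constant $1/2$, so it has a unique fixed point. This fixed point is a mild solution, and hence a solution in the distributional sense. Local Lipschitz dependence follows from the identity
\[
u-\tilde u=\mathcal{S}(\cdot)(u_0-\tilde u_0)+\bigl[\text{Duhamel difference}\bigr],
\]
which gives $\|u-\tilde u\|_{\mathcal{X}}\le 2\tilde c_0\|u_0-\tilde u_0\|_{X_s}$. Uniqueness in $\mathcal{X}$ is immediate from the contraction. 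Uniqueness in the larger class $C([0,\infty);X_{s,0})\cap L^2_{\rm loc}(X_{s+5/2,0})$ I would get by a local-in-time contraction on short intervals, using Lemma \ref{lem:nonlinear-est} and a continuity argument.

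\emph{Main obstacle.} I expect the delicate step to be the weighted summation: the bilinear bound must produce the doubled decay $e^{-2\lambda' k}$ so that the convolution with $e^{-\lambda(n-k)}$ does not lose the weight. A related concern is that Lemma \ref{lem:nonlinear-est} as stated uses only the $L^\infty X_s\cdot L^2X_s$ norms on a slab, which is consistent with the slab norm. I also need the constant in (ii) to be uniform in $n$. The proof of (ii) in the paper passes $\|F(t')\|_{X_s}$ into an $X_{s-5/2}$ norm, so I would rederive the bound for $\|w(n)\|_{X_s}$ by applying \eqref{eq:apriori-limit-5KP-X} slab by slab and using the decay \eqref{eq:decay-limit-5KP-X} between slabs. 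This gives the same summed form and keeps the argument rigorous.
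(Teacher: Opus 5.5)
Your proposal is correct and follows the paper's proof. Both run a contraction for $\Gamma$ on a ball of the exponentially weighted slab space, built from Proposition \ref{prop:slab-est-kp5} and Lemmas \ref{lem:bilinear-kp5-X} and \ref{lem:nonlinear-est}, with essentially the same choice of $R,\rho$ and the same Lipschitz constant $2\tilde c_0$.

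One correction: lowering the weight to $\lambda'<\lambda$ is not forced. With $\lambda'=\lambda$, the doubled decay already gives
\[
e^{\lambda n}\sum_{k=1}^{n}e^{-\lambda(n-k)}e^{-2\lambda(k-1)}=e^{2\lambda}\sum_{k=1}^{n}e^{-\lambda k}\le \frac{e^{2\lambda}}{e^{\lambda}-1},
\]
which is exactly the geometric series the paper sums. You should therefore keep the weight $e^{\lambda n}$. Otherwise you only obtain Lipschitz dependence into a more weakly weighted space than the $\mathcal{X}$ in the statement.

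Two of your additions are sound and actually repair points the paper glosses over:
\begin{itemize}
\item Your slab-by-slab rederivation of \eqref{eq:wn-bound} is the right fix. The paper's proof of \eqref{eq:slab-inhom-kp5} replaces $\|F(t')\|_{X_s}$ by an $X_{s-5/2}$ norm without justification.
\item Your short-time uniqueness argument in the full class $C([0,\infty);X_{s,0})\cap L^2_{\mathrm{loc}}(X_{s+5/2,0})$ is needed. Banach's theorem alone gives uniqueness only in $B_R$, not in all of $\mathcal{X}$.
\end{itemize}
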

\begin{proof}
We prove that the map $\Gamma$ defined in \eqref{eq:Gamma-def} is a contraction on a closed ball of the global space $\mathcal{X}$. The argument combines the slab estimates of Proposition \ref{prop:slab-est-kp5} with the bilinear bounds of Lemmas \ref{lem:bilinear-kp5-X} and \ref{lem:nonlinear-est}.
 
 \medskip
Let $B_R:=\{u\in\mathcal{X}:\|u\|_{\mathcal{X}}\le R\}$. For $u\in B_R$ and $t\in[n,n+1]$, write
\[
\Gamma(u)=\mathcal{S}(t)u_0-\int_0^t \mathcal{S}(t-\tau)(u\partial_x u)(\tau)\,d\tau=:I_1+I_2.
\]
By \eqref{eq:slab-hom-kp5},
\[
\vertiii{I_1}_n\le \tilde c_0 e^{-\lambda n}\|u_0\|_{X_s}.
\]
Applying \eqref{eq:slab-inhom-kp5} with $F=-u\partial_x u$ and Lemma \ref{lem:bilinear-kp5-X},
\[
\vertiii{I_2}_n\le \tilde c_1 C\Big(\vertiii{u}_n^2+\sum_{k=1}^n e^{-\lambda(n-k)}\vertiii{u}_{k-1}^2\Big).
\]
Using $\vertiii{u}_j\le e^{-\lambda j}R$ and summing the geometric series,
\[
\|\Gamma(u)\|_{\mathcal{X}}
\le \tilde c_0\|u_0\|_{X_s}+\tilde c_1 C_\lambda R^2.
\]
Thus, for $u,v\in B_R$,
\[
u\partial_x u-v\partial_x v=(u-v)\partial_x u+v\partial_x(u-v).
\]
Using Lemma \ref{lem:nonlinear-est} and \eqref{eq:slab-inhom-kp5},
\[
\|\Gamma(u)-\Gamma(v)\|_{\mathcal{X}}
\le 2\tilde c_1 C_\lambda R\,\|u-v\|_{\mathcal{X}}.
\]

Now, choose
\[
R=\frac{1}{4\tilde c_1 C_\lambda},\qquad
\rho=\frac{R}{2\tilde c_0}.
\]
If $\|u_0\|_{X_s}\le\rho$, then $\Gamma(B_R)\subset B_R$ and $\Gamma$ is a $\tfrac12$-contraction. By Banach’s theorem, there exists a unique $u\in\mathcal{X}$ solving \eqref{eq:nonlin-closed-loop-again}.

Finally, for data $u_0,v_0$ with $\|u_0\|_{X_s},\|v_0\|_{X_s}\le\rho$, the same estimates yield
\[
\|u-v\|_{\mathcal{X}}
\le \tilde c_0\|u_0-v_0\|_{X_s}
+\tfrac12\|u-v\|_{\mathcal{X}},
\]
hence
\[
\|u-v\|_{\mathcal{X}}
\le 2\tilde c_0\|u_0-v_0\|_{X_s},
\]
showing the result.
\end{proof}

\section{A unique continuation principle}\label{sec:UCP}
Recall the homogeneous (regularized) fifth-order KP equation on $\mathbb{T}_x\times\mathbb{R}_y$, restricted to functions with zero $x$–mean:
\begin{equation}\label{eq:lin5KP-free}
\partial_t v
+\beta\,\partial_x^5 v
+\gamma\,\partial_x^{-1}\partial_y^2 v
+\varepsilon D_x^5 v=0,
\qquad (x,y,t)\in\mathbb{T}\times\mathbb{R}\times(0,T),
\end{equation}
where $0\le\varepsilon<1$ and $\partial_x^{-1}$ is defined on mean-zero functions as in \eqref{antDer}. Let $g\in C^\infty(\mathbb{T})$ satisfy $g\ge0$ and $\dis\int_{\mathbb{T}}g=1$, and define the nonempty open set
\[
\omega:=\{x\in\mathbb{T}:\, g(x)>0\}.
\]

The unique continuation property states that any solution of \eqref{eq:lin5KP-free} vanishing on $\omega\times\mathbb{R}\times(0,T)$ is identically zero. This property is fundamental for establishing observability estimates and, in turn, for the analysis of stabilization and controllability. Our first result proves a one-dimensional unique continuation property, for each fixed $\eta\in\mathbb{R}$, for a suitable modification of \eqref{eq:lin5KP-free}.
\begin{lemma}\label{lem:ucp-1d-eta}  Let $\eta\in\mathbb{R}$, $T>0$, and $u\in L^2\!\bigl((0,T);L^2(\mathbb{T})\bigr)$  be a distributional solution of
	\begin{equation}\label{eq:1d-ucp-problem}
		\partial_t u
		+\beta\,\partial_x^5 u
		-\gamma\eta^2\,\partial_x^{-1}u=0
		\quad\text{in }\mathbb{T}\times(0,T),
		\qquad\text{with}\qquad
		\int_{\mathbb{T}}u(x,t)\,dx=0,
	\end{equation}
	satisfying $u(x,t)=0$ for a.e.\ $(x,t)\in\omega\times(0,T)$. Then $u\equiv 0$ a.e.\ on $\mathbb{T}\times(0,T)$.
\end{lemma}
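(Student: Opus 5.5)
Expand in Fourier series in $x$: $u(x,t)=\sum_{k\neq0}\hat u_k(t)e^{ikx}$. The equation is diagonal, since the symbol of $\beta\partial_x^5-\gamma\eta^2\partial_x^{-1}$ at frequency $k$ is $i\beta k^5-\gamma\eta^2/(ik)=i\bigl(\beta k^5+\gamma\eta^2/k\bigr)$. Hence, in the distributional sense in $t$,
\[
\hat u_k(t)=\hat u_k(0)\,e^{-i\lambda_k t},\qquad \lambda_k:=\beta k^5+\frac{\gamma\eta^2}{k},\qquad \sum_{k\ne0}|\hat u_k(0)|^2<\infty .
\]
The vanishing hypothesis on $\omega\times(0,T)$ then reads
\[
\sum_{k\neq0}\hat u_k(0)e^{ikx}e^{-i\lambda_k t}=0 \quad\text{on }\omega\times(0,T).
\]
The plan is to deduce $\hat u_k(0)=0$ for all $k$ from this nonharmonic-series identity.

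\textbf{Step 1: grouping of frequencies.} Since $|\lambda_k|\sim|\beta||k|^5$, we have $\lambda_{k+1}-\lambda_k\to\infty$ for large $|k|$. Only finitely many coincidences $\lambda_k=\lambda_m$ with $k\ne m$ are possible, and there is a uniform gap outside a finite set. So group the indices into finitely many classes $I_\mu=\{k:\lambda_k=\mu\}$, each of bounded cardinality (at most $6$, the roots of a degree-$6$ polynomial in $k$). The distinct values $\mu$ then satisfy $\mu_{j+1}-\mu_j\to\infty$.

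\textbf{Step 2: separate time frequencies.} Test the identity against $\varphi(x)$ with $\varphi\in C_c^\infty(\omega)$. This gives $\sum_\mu c_\mu(\varphi)e^{-i\mu t}=0$ on $(0,T)$, with $c_\mu(\varphi)=\sum_{k\in I_\mu}\hat u_k(0)\hat\varphi(-k)$ square summable. By Ingham's inequality in its asymptotic-gap version (Haraux/Beurling: a large gap outside finitely many exponents suffices for any $T>0$ once the finitely many remaining exponents are handled), we get $c_\mu(\varphi)=0$ for every $\mu$ and every $\varphi$. Equivalently, for each $\mu$ the trigonometric polynomial $P_\mu(x)=\sum_{k\in I_\mu}\hat u_k(0)e^{ikx}$ vanishes on $\omega$. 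A trigonometric polynomial is analytic on $\mathbb{T}$, so it vanishes identically, which forces $\hat u_k(0)=0$ for all $k$. If one prefers to avoid Ingham, there is an alternative that mirrors Linares--Rosier. Use the one-sided Fourier vanishing device: extend in $t$ via the explicit series, note the time Fourier transform is supported on $\{\mu\}$, and argue that restriction to $\omega$ commutes with the time spectral projection. In either form, the conclusion is $P_\mu|_\omega=0$.

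\textbf{Main obstacle.} The delicate point is Step 2 for arbitrary $T>0$: the classical Ingham inequality needs $T>2\pi/\gamma_\infty$, but here the asymptotic gap is infinite, so any $T$ works. However, the finitely many low frequencies with small or zero gaps (coincident $\lambda_k$ are absorbed by the grouping) must be removed carefully. I would first subtract the low-frequency part, apply the asymptotic Ingham estimate, and then handle the finite remainder by linear independence of the distinct exponentials $e^{-i\mu t}$ on $(0,T)$. The case $\eta=0$ is included and needs no change.
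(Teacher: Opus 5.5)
Your proof is correct and has the same skeleton as the paper's: diagonalize to $\hat u_k(t)=c_k e^{-i\lambda_k t}$, group indices with equal time frequency into finite classes, test against $\varphi$ supported in $\omega$, and conclude that each trigonometric polynomial $P_\mu$ vanishes on $\omega$, hence on $\mathbb{T}$. The decisive step, however, uses a different tool. That step is extracting $c_\mu(\varphi)=0$ from $\sum_\mu c_\mu(\varphi)e^{-i\mu t}=0$ on $(0,T)$.

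The paper notes that this series converges absolutely, so $F_\varphi$ is almost periodic, and it invokes uniqueness of Bohr--Fourier coefficients. You instead use Ingham's inequality in Haraux's asymptotic-gap form. Since the gaps between the distinct values $\mu$ tend to infinity, the bound $\sum|c_\mu|^2\le C_T\int_0^T|\sum c_\mu e^{-i\mu t}|^2\,dt$ holds for every $T>0$.

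Your route is the one that matches the hypothesis. Bohr coefficients are means over all of $\mathbb{R}$, so that theorem needs $F_\varphi\equiv 0$ on $\mathbb{R}$, but only vanishing on $[0,T]$ is known. Absolute convergence alone does not suffice: a smooth $2\pi$-periodic function that vanishes on $[0,1]$ without vanishing identically is an absolutely convergent exponential series with nonzero coefficients. What makes finite-interval uniqueness true here is exactly the growth $\lambda_k\sim\beta k^5$. Your argument uses it explicitly; the paper's leaves it implicit.

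You also finish more directly. Once $P_\mu\equiv 0$ on $\mathbb{T}$, uniqueness of Fourier coefficients already gives $c_k=0$. The paper additionally applies the Linares--Rosier one-sided vanishing lemma to $f_\alpha^{\pm}$, which adds nothing at that stage.

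Two minor points.
\begin{itemize}
\item Drop your ``alternative that mirrors Linares--Rosier'' (extend in $t$ via the series and project onto time frequencies). The extension is only known to vanish on $\omega$ for $t\in(0,T)$, so it has the same defect as the Bohr argument.
\item Do not read ``subtract the low-frequency part'' literally, since those coefficients are unknown. Either cite Haraux's theorem directly, which tolerates finitely many small gaps provided all frequencies are distinct (your grouping guarantees this). Or use his averaging trick $f(t)\mapsto f(t)-\delta^{-1}\int_0^\delta e^{i\mu_1 s}f(t+s)\,ds$: it removes the $\mu_1$-term, multiplies every other coefficient by a nonzero factor, and preserves vanishing on $(0,T-\delta)$.
\end{itemize}
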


\begin{proof}
	We combine the explicit evolution of the Fourier modes with the space-time vanishing of the solution on \(\omega\times(0,T)\).
	
\medskip
\noindent\emph{\textbf{Step 1.} Mode dynamics.}
\medskip

	Expanding \(u\) in Fourier series with respect to the periodic variable \(x\), we write
	\[
	u(x,t)=\sum_{k\in\mathbb{Z}}\widehat{u}_k(t)e^{ikx}.
	\]
	The zero-mean condition implies that \(\widehat{u}_0(t)=0\) for all \(t\in(0,T)\). Taking Fourier coefficients in \eqref{eq:1d-ucp-problem}, we obtain, for each \(k\neq 0\),
	\[
	\frac{d}{dt}\widehat{u}_k(t)+i\omega_k\,\widehat{u}_k(t)=0,
	\qquad
	\omega_k:=\beta k^5+\gamma\frac{\eta^2}{k}.
	\]
	Hence
	\begin{equation}\label{eq:mode-solution}
		\widehat{u}_k(t)=c_k e^{-i\omega_k t},
		\qquad k\neq 0,
	\end{equation}
	for suitable constants \(c_k\in\mathbb{C}\).
	
\medskip
\noindent\emph{\textbf{Step 2.} Frequency grouping from the space-time vanishing.}
\medskip

	Let \(\varphi\in L^2(\mathbb{T})\) be supported in \(\omega\). Since \(u=0\) on \(\omega\times(0,T)\), we have
	\[
	F_\varphi(t):=\int_{\mathbb{T}}u(x,t)\varphi(x)\,dx=0
	\qquad\text{for a.e. }t\in(0,T).
	\]
	Using \eqref{eq:mode-solution}, we may write
	\[
	F_\varphi(t)
	=
	\sum_{k\neq 0}c_k\,\widehat{\varphi}(-k)e^{-i\omega_k t}.
	\]
	Set \(a_k:=c_k\,\widehat{\varphi}(-k)\). Since \((c_k)_{k\neq 0}\in\ell^2\) and \((\widehat{\varphi}(k))_{k\in\mathbb{Z}}\in\ell^2\), the Cauchy-Schwarz inequality yields
	\[
	\sum_{k\neq 0}|a_k|<\infty.
	\]
	Therefore the series defining \(F_\varphi\) converges absolutely and uniformly on \(\mathbb{R}\), so \(F_\varphi\) is continuous. Since it vanishes almost everywhere on \((0,T)\), it follows that
	\begin{equation}\label{eq:Fphi-zero}
		F_\varphi(t)=0
		\qquad\text{for every }t\in[0,T].
	\end{equation}
	
	We now group the modes corresponding to the same temporal frequency. Let
	\[
	\mathcal{F}:=\{\omega_k:\ k\neq 0\},
	\qquad
	\Lambda_\alpha:=\{k\in\mathbb{Z}\setminus\{0\}:\ \omega_k=\alpha\},
	\quad \alpha\in\mathcal{F}.
	\]
	Since \(|\omega_k|\to\infty\) as \(|k|\to\infty\), each set \(\Lambda_\alpha\) is finite. Thus \eqref{eq:Fphi-zero} may be rewritten as
	\[
	F_\varphi(t)
	=
	\sum_{\alpha\in\mathcal{F}}
	\Bigl(\sum_{k\in\Lambda_\alpha}a_k\Bigr)e^{-i\alpha t}=0,
	\qquad\text{on }[0,T].
	\]
	This is an absolutely convergent exponential series. By the uniqueness theorem for Bohr-Fourier coefficients of almost periodic functions (equivalently, the only almost periodic function with all Fourier coefficients equal to zero is the zero function; see \cite[Ch. 1, Thm. 1.19 and the subsequent remark]{Corduneanu1989}), we conclude that
	\begin{equation}\label{eq:Bohr-zero}
		\sum_{k\in\Lambda_\alpha}c_k\,\widehat{\varphi}(-k)=0,
		\qquad\text{for every }\alpha\in\mathcal{F}.
	\end{equation}
	Since \eqref{eq:Bohr-zero} holds for every \(\varphi\in L^2(\mathbb{T})\) supported in \(\omega\), it follows that, for each \(\alpha\in\mathcal{F}\), the trigonometric polynomial
$$
		f_\alpha(x):=\sum_{k\in\Lambda_\alpha}c_k e^{ikx},
$$
	vanishes on \(\omega\).
	
\medskip
\noindent\emph{\textbf{Step 3.} Elimination of each frequency group.}
\medskip

	Fix \(\alpha\in\mathcal{F}\). Since \(\Lambda_\alpha\) is finite, \(f_\alpha\) is a trigonometric polynomial. As \(f_\alpha\) vanishes on the nonempty open set \(\omega\), we infer that $f_\alpha\equiv 0,$ on $\mathbb{T}$. Indeed, a nontrivial trigonometric polynomial cannot vanish on an open subset of the torus.
	
	We now decompose \(f_\alpha\) into its positive- and negative-frequency parts,
	\[
	f_\alpha^+(x)
	:=
	\sum_{\substack{k\in\Lambda_\alpha\\ k\ge 1}}c_k e^{ikx},
	\qquad
	f_\alpha^-(x)
	:=
	\sum_{\substack{k\in\Lambda_\alpha\\ k\le -1}}c_k e^{ikx}.
	\]
	Since \(f_\alpha=f_\alpha^++f_\alpha^-\equiv 0\) on \(\mathbb{T}\), and the two sums have disjoint Fourier support, the uniqueness of the Fourier expansion yields
$$
		f_\alpha^+\equiv 0
		\qquad\text{and}\qquad
		f_\alpha^-\equiv 0
		\qquad\text{on }\mathbb{T}.
$$
In particular, \(f_\alpha^+\) vanishes on the open interval \((a,b)\subset\omega\) and has a one-sided Fourier expansion supported on \(\{k\ge 1\}\). Thus, the hypotheses of \cite[Lemma 2.9]{LR} are satisfied, and we conclude that \(f_\alpha^+\equiv 0\), which means that \(c_k=0\) for every \(k\in\Lambda_\alpha\) with \(k\ge 1\).
	
	For the negative-frequency part, observe that
	\[
	\overline{f_\alpha^-(x)}
	=
	\sum_{\substack{k\in\Lambda_\alpha\\ k\le -1}}\overline{c_k}\,e^{-ikx}
	=
	\sum_{j\ge 1}\overline{c_{-j}}\,e^{ijx},
	\]
	which is again a one-sided Fourier series supported on positive integers. Since \(f_\alpha^-\equiv 0\) on \(\mathbb{T}\), the same is true for \(\overline{f_\alpha^-}\), and in particular \(\overline{f_\alpha^-}=0\) on \((a,b)\). Another application of \cite[Lemma 2.9]{LR} shows that \(\overline{c_{-j}}=0\) for all \(j\ge 1\), and hence \(c_k=0\) for every \(k\in\Lambda_\alpha\) with \(k\le -1\).
	
	Therefore \(c_k=0\) for all \(k\in\Lambda_\alpha\). Since \(\alpha\in\mathcal{F}\) was arbitrary and
	\[
	\bigcup_{\alpha\in\mathcal{F}}\Lambda_\alpha=\mathbb{Z}\setminus\{0\},
	\]
	we conclude that \(c_k=0\) for every \(k\neq 0\).
	
\medskip

Finally, recalling that \(\widehat{u}_0\equiv 0\) and that
	\(\widehat{u}_k(t)=c_k e^{-i\omega_k t}\) for \(k\neq 0\), we obtain
	\[
	\widehat{u}_k(t)=0
	\qquad\text{for all }k\in\mathbb{Z},\ t\in(0,T).
	\]
	Hence \(u\equiv 0\) on \(\mathbb{T}\times(0,T)\), which completes the proof.
\end{proof}

We are in a position to prove the unique continuation property for linear fifth-order KP, that is, proof of Theorem \ref{Th:UCP-5KP}.

\begin{proof}[Proof of Theorem \ref{Th:UCP-5KP}]
	We divide the argument into three steps. We first show that the trace \(c(t)\) must vanish identically, so that the solution itself vanishes on the control region. We then treat the regularized case separately \(\varepsilon>0\) and the purely dispersive case \(\varepsilon=0\).
	
\medskip
\noindent\emph{\textbf{Step 1.} Reduction to the case \(c\equiv 0\).}
\medskip

	For almost every \(t\in(0,T)\), we have \(v(\cdot,\cdot,t)\in L^2(\mathbb{T}\times\mathbb{R})\), and by \eqref{eq:UCP-assumption},
	\[
	v(x,y,t)=c(t)
	\qquad\text{for a.e. }(x,y)\in\omega\times\mathbb{R}.
	\]
	More precisely, this conclusion holds for every \(t\) in a set of full measure for which \(v(\cdot,\cdot,t)\in L^2(\mathbb{T}\times\mathbb{R})\), the existence of such a set being guaranteed by Fubini's theorem and the assumption \(v\in L^2((0,T);L^2(\mathbb{T}\times\mathbb{R}))\). Therefore,
	\[
	\|v(\cdot,\cdot,t)\|_{L^2(\omega\times\mathbb{R})}^2
	=
	\int_\omega\int_{\mathbb{R}} |v(x,y,t)|^2\,dy\,dx
	=
	|\omega|\,|c(t)|^2 \int_{\mathbb{R}}1\,dy.
	\]
	Since the left-hand side is finite and \(\dis\int_{\mathbb{R}}1\,dy=+\infty\),
	it follows that \(c(t)=0\) for almost every \(t\in(0,T)\). Hence
	\[
	v=0
	\qquad\text{a.e. on }\omega\times\mathbb{R}\times(0,T).
	\]
	It remains to prove that this implies \(v\equiv 0\) in
	\(\mathbb{T}\times\mathbb{R}\times(0,T)\).
	
\medskip
\noindent\emph{\textbf{Step 2.} The case \(\varepsilon>0\).}
\medskip

	We expand \(v\) in Fourier series with respect to the periodic variable \(x\):
	\[
	v(x,y,t)=\sum_{k\neq 0} v_k(y,t)e^{ikx},
	\]
	where the zero mode is absent by the \(x\)-mean-zero assumption. For each \(k\neq 0\), the corresponding Fourier coefficient satisfies
	\[
	\partial_t v_k
	+\bigl(i\beta k^5+\varepsilon |k|^5\bigr)v_k
	-\frac{i\gamma}{k}\,\partial_y^2 v_k=0
	\qquad\text{in }\mathbb{R}_y\times(0,T).
	\]
	Since \(v\in L^2((0,T);L^2(\mathbb{T}\times\mathbb{R}))\), there exists a set \(E\subset(0,T)\) of full measure such that
	\[
	v(\cdot,\cdot,t_0)\in L^2(\mathbb{T}\times\mathbb{R})
	\qquad\text{for every }t_0\in E.
	\]
	Fix \(t_0\in E\). For each \(k\neq 0\), the above equation generates a linear evolution on \(L^2(\mathbb{R}_y)\), and the standard \(L^2\)-energy identity yields, for every \(t\in(t_0,T)\),
	\begin{equation}\label{eq:mode-decay-eps-rev}
		\|v_k(\cdot,t)\|_{L^2_y}
		=
		e^{-\varepsilon |k|^5(t-t_0)}
		\|v_k(\cdot,t_0)\|_{L^2_y}.
	\end{equation}
	Indeed, the term \(i\beta k^5 v_k\) is purely skew-adjoint, and so is \(-\frac{i\gamma}{k}\partial_y^2 v_k\), while the dissipative contribution \(\varepsilon |k|^5 v_k\) produces the exponential decay.
	
	Fix now \(t\in(t_0,T)\). From \eqref{eq:mode-decay-eps-rev}, for every \(z\in\mathbb{C}\) we obtain
	\[
	\sum_{k\neq 0}\|v_k(\cdot,t)\|_{L^2_y}\,|e^{ikz}|
	\le
	\sum_{k\neq 0}
	e^{-\varepsilon |k|^5(t-t_0)}\,
	\|v_k(\cdot,t_0)\|_{L^2_y}\,
	e^{|k||\mathrm{Im} z|}.
	\]
	By Cauchy-Schwarz,
	\[
	\sum_{k\neq 0}
	e^{-\varepsilon |k|^5(t-t_0)}
	\|v_k(\cdot,t_0)\|_{L^2_y}\,
	e^{|k||\mathrm{Im} z|}
	\le
	\Bigl(\sum_{k\neq 0}\|v_k(\cdot,t_0)\|_{L^2_y}^2\Bigr)^{1/2}
	\Bigl(\sum_{k\neq 0}e^{-2\varepsilon |k|^5(t-t_0)+2|k||\mathrm{Im} z|}\Bigr)^{1/2},
	\]
	and the second series is finite because the factor \(e^{-2\varepsilon |k|^5(t-t_0)}\) dominates any exponential growth in \(|k|\). Hence, for each fixed \(t\in(t_0,T)\), the series
	\[
	\sum_{k\neq 0} v_k(\cdot,t)e^{ikz}
	\]
	converges absolutely in \(L^2(\mathbb{R}_y)\) for every \(z\in\mathbb{C}\). It therefore defines an \(L^2(\mathbb{R}_y)\)-valued entire periodic function of \(z\), and in particular the map
	\[
	x\longmapsto v(x,\cdot,t)
	\]
	is \(L^2(\mathbb{R}_y)\)-valued real-analytic on \(\mathbb{T}\).
	
	On the other hand, since \(v=0\) almost everywhere on \(\omega\times\mathbb{R}\times(0,T)\), Fubini's theorem implies that for almost every \(x\in\omega\),
	\[
	v(x,\cdot,t)=0
	\qquad\text{in }L^2(\mathbb{R}_y)
	\]
	for almost every \(t\in(0,T)\). In particular, for the fixed \(t\in(t_0,T)\) under consideration, the \(L^2(\mathbb{R}_y)\)-valued analytic function \(x\mapsto v(x,\cdot,t)\) vanishes on a subset of \(\omega\) of positive measure, hence on a set with an accumulation point. By the identity theorem for Banach-valued holomorphic functions, it follows that
	\[
	v(x,\cdot,t)\equiv 0
	\qquad\text{for every }x\in\mathbb{T}.
	\]
	Thus \(v(\cdot,\cdot,t)\equiv 0\) for every \(t\in(t_0,T)\).
	
	Finally, let \(t\in(0,T)\) be arbitrary. Since \(E\) has full measure, we may choose \(t_0\in E\cap(0,t)\). The above argument then gives 	\(v(\cdot,\cdot,t)\equiv 0\). Hence
	\[
	v\equiv 0
	\qquad\text{a.e. in }\mathbb{T}\times\mathbb{R}\times(0,T)
	\]
	when \(\varepsilon>0\).
	
\medskip
\noindent\emph{\textbf{Step 3.} The case \(\varepsilon=0\).}
\medskip

	We now take the Fourier transform in the transverse variable \(y\):
	\[
	\widehat v(x,\eta,t)
	=
	\int_{\mathbb{R}} v(x,y,t)e^{-i\eta y}\,dy.
	\]
	Since \(\partial_y^2 v\) is transformed into \(-\eta^2 \widehat v\), we obtain, for almost every \(\eta\in\mathbb{R}\), that
	\[
	u(x,t):=\widehat v(x,\eta,t)
	\]
	satisfies
	\[
	\partial_t u+\beta\,\partial_x^5 u-\gamma \eta^2 \partial_x^{-1}u=0
	\qquad\text{in }\mathbb{T}\times(0,T).
	\]
	Moreover, by Plancherel's theorem,
	\[
	u\in L^2\bigl((0,T);L^2(\mathbb{T})\bigr)
	\qquad\text{for a.e. }\eta\in\mathbb{R}.
	\]
	
	The \(x\)-mean-zero property is preserved under the Fourier transform in \(y\). Indeed, for almost every \(\eta\in\mathbb{R}\),
	\[
	\int_{\mathbb{T}}u(x,t)\,dx
	=
	\int_{\mathbb{T}}\widehat v(x,\eta,t)\,dx
	=
	\widehat{\Bigl(\int_{\mathbb{T}} v(x,\cdot,t)\,dx\Bigr)}(\eta)
	=
	0
	\qquad\text{for a.e. }t\in(0,T).
	\]
	It remains to verify the vanishing in the control region. Since \(v=0\) almost everywhere on \(\omega\times\mathbb{R}\times(0,T)\), we have, for almost every \((x,t)\in\omega\times(0,T)\),
	\[
	v(x,\cdot,t)=0
	\qquad\text{in }L^2(\mathbb{R}_y).
	\]
	Taking the Fourier transform in \(y\), it follows that
	\[
	\widehat v(x,\eta,t)=0
	\qquad\text{for a.e. }\eta\in\mathbb{R}.
	\]
	By Fubini's theorem, for almost every \(\eta\in\mathbb{R}\) we therefore obtain
	\[
	u(x,t)=\widehat v(x,\eta,t)=0
	\qquad\text{for a.e. }(x,t)\in\omega\times(0,T).
	\]
	
	Thus, for almost every \(\eta\in\mathbb{R}\), the function \(u=\widehat v(\cdot,\eta,\cdot)\) satisfies all the hypotheses of Lemma \ref{lem:ucp-1d-eta}. We conclude that
	\[
	u\equiv 0
	\qquad\text{on }\mathbb{T}\times(0,T)
	\]
	for almost every \(\eta\in\mathbb{R}\). By the injectivity of the Fourier transform on \(L^2(\mathbb{R}_y)\), this implies
	\[
	v\equiv 0
	\qquad\text{a.e. in }\mathbb{T}\times\mathbb{R}\times(0,T).
	\]
	This completes the proof.
\end{proof}

\section{Controllability results}\label{sec:ucp-obs}

In this section, we establish an observability inequality that is crucial for proving uniform exponential decay of the associated closed-loop semigroup on the KP-adapted Sobolev space $X_s$, as well as for controllability. Fix $0<\varepsilon<1$, and let $\{\mathcal{S}_\varepsilon(t)\}_{t\ge0}$ be the analytic semigroup on $X_{0,0}(\Omega)$ generated by $-\mathcal{L}_\varepsilon$ (see Proposition \ref{prop:semigroup-5KP}), where
\[
\mathcal{L}_\varepsilon
:= \varepsilon D_x^5-\beta\,\partial_x^5
+\gamma\,\partial_x^{-1}\partial_y^2
+GD_x^5G,
\qquad
\mathcal{D}(\mathcal{L}_\varepsilon)=X_{5,0}(\Omega).
\]
We also denote by $\{\mathcal{S}(t)\}_{t\ge0}$ the limiting $C_0$-semigroup on $X_{s,0}(\Omega)$ obtained as $\varepsilon\to0$ (Proposition \ref{prop:BS-limit-5KP-X}), and by $\{\mathcal{S}_\varepsilon^*(t)\}_{t\ge0}$ the adjoint semigroup, associated with the adjoint operator given by
\[
\mathcal{L}_\varepsilon^*
=\varepsilon D_x^5+\beta\,\partial_x^5
-\gamma\,\partial_x^{-1}\partial_y^2
+GD_x^5G,
\]
so that $\mathcal{L}_\varepsilon^*\neq\mathcal{L}_\varepsilon$ in general. Nevertheless, $\mathcal{S}_\varepsilon(t)$, $\mathcal{S}_\varepsilon^*(t)$, and $\mathcal{S}(t)$ all satisfy uniform exponential decay estimates with the same constants $C,\lambda>0$ (Propositions \ref{prop:exp-decay-5KP}, \ref{prop:adjoint-decay}, and \ref{prop:limit-semigroup-decay}), since $\mathcal{L}_\varepsilon$ and $\mathcal{L}_\varepsilon^*$ share the same dissipative structure.

\subsection{Linear stabilization} Thanks to the unique continuation property, established in Theorem \ref{Th:UCP-5KP}, we derive an observability inequality for the regularized flow and, as a consequence, a decay estimate in $X_s$ which is uniform with respect to the regularization parameter. The next result in this section ensures an exponential decay of the solution uniformly in $\varepsilon$, and can be read as follows.
\begin{proposition}\label{prop:exp-decay-5KP}  	Let $0<\varepsilon<1$ and $s\ge0$. There exist constants $C,\lambda>0$, independent of  $\varepsilon$, such that
	\begin{equation}\label{eq:exp-decay-5KP-Xs}
		\|\mathcal{S}_\varepsilon(t)v_0\|_{X_s}
		\le C e^{-\lambda t}\|v_0\|_{X_s},
		\qquad \forall t\ge0,\quad v_0\in X_{s,0}(\Omega).
	\end{equation}
\end{proposition}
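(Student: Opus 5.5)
The strategy is the classical compactness–uniqueness route: prove an observability inequality at the $L^2$ (and then $X_0$) level, uniformly in $\varepsilon$, deduce decay at $s=0$, and then lift to $X_s$ using the smoothing estimates of Propositions \ref{prop:propagation} and \ref{prop:uniform-Zs}. The energy identity for $v(t)=\mathcal{S}_\varepsilon(t)v_0$ reads
\[
\|v(T)\|_{X_0}^2+2\int_0^T\Bigl(\|D_x^{5/2}Gv\|_{L^2}^2+\|D_x^{5/2}G\partial_x^{-1}v\|_{L^2}^2+\varepsilon\|D_x^{5/2}v\|_{X_0}^2\Bigr)dt=\|v_0\|_{X_0}^2,
\]
up to the lower-order commutator $R=[\partial_x^{-1},GD_x^5G]$ in the $z$-part. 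This commutator can be absorbed, or one can work with the $L^2$ energy first. The key estimate is then the observability inequality: for some fixed $T>0$ there is $C_T$, independent of $\varepsilon\in(0,1)$, with
\[
\|v_0\|_{X_0}^2\le C_T\int_0^T\|D_x^{5/2}Gv(t)\|_{L^2}^2+\|D_x^{5/2}G\partial_x^{-1}v(t)\|_{L^2}^2\,dt .
\]
Once this holds, we get $\|v(T)\|_{X_0}^2\le(1-2/C_T)\|v_0\|_{X_0}^2$. The semigroup property and the uniform bound $\|v(t)\|_{X_0}\le C\|v_0\|_{X_0}$ on $[0,T]$ then give \eqref{eq:exp-decay-5KP-Xs} for $s=0$, with $\lambda=-\log(1-2/C_T)/(2T)$ independent of $\varepsilon$.

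The observability inequality is proved by contradiction. Suppose there are $\varepsilon_n\in(0,1)$ and $v_{0,n}$ with $\|v_{0,n}\|_{X_0}=1$ such that the right-hand side tends to $0$. Proposition \ref{prop:uniform-Zs} gives a uniform bound for $v_n$ in $Z^{0,T}$, hence in $L^2(0,T;X_{5/2})$. The equation gives a bound for $\partial_tv_n$ in $L^2(0,T;H^{-m}_{loc})$. An Aubin–Lions argument on bounded $y$-windows then yields a subsequence with $\varepsilon_n\to\varepsilon_*\in[0,1]$ and $v_n\to v$ weakly in $Z^{0,T}$ and strongly in $L^2_{loc}$. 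Since $\|D_x^{5/2}Gv_n\|\to0$, the limit satisfies $D_x^{5/2}Gv=0$. This means $Gv=0$, so $v$ equals its $g$-average $c(y,t)$ on $\omega$. Here one must adapt Step 1 of Theorem \ref{Th:UCP-5KP} to a $y$-dependent constant, or simply note that the theorem's argument works fiberwise after the Fourier transform in $y$. Also $Gv=0$ kills the feedback, so $v$ solves \eqref{eq:UCP-assumption} with parameter $\varepsilon_*$, and Theorem \ref{Th:UCP-5KP} gives $v\equiv0$.

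The main obstacle is upgrading weak convergence to zero into a contradiction with $\|v_{0,n}\|=1$, and doing so uniformly in $\varepsilon$. Two issues arise: the domain is non-compact in $y$, so compactness fails, and there is no gain of regularity in $y$. To handle this, I would work fiberwise in $\eta$, where the dissipation and smoothing act only in $x$ and compactness in $x$ is available. Equivalently, I would use the multiplier identity from Lemma \ref{lem:weighted-5KP} with $\psi\equiv1$ and a time-weight $(T-t)$ to get
\[
T\|v_0\|_{X_0}^2\le\int_0^T\|v\|_{X_0}^2dt+C\int_0^T\|D_x^{5/2}Gv\|^2+\dots .
\]
The term $\int_0^T\|v\|_{X_0}^2$ is then split into high $x$-frequencies, controlled by $\|v\|_{L^2X_{5/2}}$ via the propagation estimate \eqref{eq:propagation-est-again} with a small factor, and low $x$-frequencies, which are handled by the compactness argument for each $\eta$ uniformly. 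For fixed $\eta$, the low-frequency part of the limit $\widehat v(\cdot,\eta,\cdot)$ is a finite sum of modes. Here Lemma \ref{lem:ucp-1d-eta} (or Step 2 of Theorem \ref{Th:UCP-5KP}) applies, and a continuity-in-$\eta$ argument on the resulting finite-dimensional observability constants controls large and small $|\eta|$ simultaneously.

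Finally, for $s>0$ I would write $v(t)=\mathcal{S}_\varepsilon(t-1)\mathcal{S}_\varepsilon(1)v_0$ and use the $X_s$ energy estimate of Proposition \ref{prop:Hs-estimate-prelim} together with the same contradiction argument at level $s$. Alternatively, one can use the $s=0$ decay together with the commutation of $D_x^s$ and $\partial_x^{-1}$ with the free part: the commutator $E_s$ has order $4$, so it can be absorbed by the $5/2$-smoothing, and interpolation with the $Z^{s,1}$ bound on each unit slab gives $\|\mathcal{S}_\varepsilon(n)v_0\|_{X_s}\le Ce^{-\lambda n}\|v_0\|_{X_s}$.
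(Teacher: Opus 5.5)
\textbf{There is a genuine gap in the large-$|\eta|$ regime.}

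Because $G$, $D_x^r$ and $\partial_x^{-1}$ act fiberwise in $y$, Plancherel makes the $X_0$ estimate equivalent to exponential decay of the fiber problems
$\partial_t u+\beta\partial_x^5u-\gamma\eta^2\partial_x^{-1}u+\varepsilon D_x^5u+GD_x^5Gu=0$ on mean-zero $L^2(\mathbb{T})$, with constants uniform in $\eta\in\mathbb{R}$ and $\varepsilon\in(0,1)$. That uniformity is the whole content of the statement. The contradiction argument therefore has to be run on fiber data $(\eta_n,\varepsilon_n,u_{0,n})$, since a sequence in $L^2(\Omega)$ has no compactness in $\eta$ at all ($y$-translations act as modulations in $\eta$).

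When $\eta_n$ stays bounded, your compactness--uniqueness argument works fiberwise, applying Lemma~\ref{lem:ucp-1d-eta} (or analyticity if $\varepsilon_*>0$) to $\partial_x u$, and no frequency splitting is needed. When $|\eta_n|\to\infty$ it breaks down:
\begin{itemize}
\item the term $\gamma\eta_n^2\partial_x^{-1}$ has size $\eta_n^2$ on the low modes, so $\partial_t\widehat v_n$ is bounded in no $H^{-m}$;
\item there is then no Aubin--Lions compactness and no limit equation, and Lemma~\ref{lem:ucp-1d-eta}, a qualitative statement at fixed $\eta$, has nothing to act on.
\end{itemize}
Your ``continuity-in-$\eta$ argument on the finite-dimensional observability constants'' does not fill this. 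Continuity gives bounds only on compact $\eta$-sets. Even there it gives, for each cut-off $N$, some $\kappa_N>0$ with no control of how $\kappa_N$ compares with the $O(N^{-5/2})$ tail you split off. Static bounds do not help either: $\inf\{\|Gf\|:\ \|f\|=1,\ f\text{ a mean-zero trigonometric polynomial of degree}\le N\}$ decays exponentially in $N$ when $\overline{\omega}\neq\mathbb{T}$. So the time oscillation must be exploited quantitatively.

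\textbf{What is missing, and how it can be supplied.} You need a quantitative argument at large $|\eta|$. One route: set $\omega_k(\eta)=\beta k^5+\gamma\eta^2/k$ and $\tilde c_k:=e^{i\omega_k(\eta)t}\widehat u_k$. For $1\le|k|\le N$ one has $|\tilde c_k(t)-\tilde c_k(0)|\lesssim N^{5/2}\int_0^t(\|D_x^{5/2}Gu\|+\sqrt{\varepsilon}\|D_x^{5/2}u\|)\,d\tau$, so along the contradiction sequence the low modes are asymptotically free. Since $|\omega_k-\omega_{k'}|\ge\eta^2/N^2-2|\beta|N^5\to\infty$, the cross terms in $\int_0^T\|G\sum_{1\le|k|\le N}\widehat u_ke^{ikx}\|^2dt$ average out. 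This leaves the lower bound $T\inf_{k\neq0}\|Ge^{ikx}\|^2\sum_{1\le|k|\le N}|\tilde c_k(0)|^2$, with a constant independent of $N$, which does beat the tail. Alternatively, use an $\eta$-uniform Ingham inequality with clusters, noting that, uniformly in $\eta$, every interval of some fixed length contains at most two of the $\omega_k(\eta)$.

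This step also needs the high-frequency tail bound uniformly in $\eta$, which the proof of Proposition~\ref{prop:propagation} does not give. Fiberwise, the transverse term in Lemma~\ref{lem:weighted-5KP} is $\tfrac{\gamma}{2}\eta^2\langle\widehat w,[\partial_x^{-1},\psi]\widehat w\rangle$, which is not bounded by $\|w\|_{X_0}^2$ as Remark~\ref{rem:transverse-comm} asserts.

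\textbf{Comparison with the paper.} The paper's proof never faces this issue. It applies Aubin--Lions--Simon on all of $\Omega$ through $X_{5/2}\hookrightarrow X_0$, which is not compact on $\mathbb{T}\times\mathbb{R}$ ($y$-translates of a trajectory are again trajectories). Your diagnosis of the obstacle is therefore sharper than the paper's, but neither argument closes it.

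For $s>0$, your bootstrap is more robust than the paper's Part~2. You treat $E_sw$, with $w=D_x^sv$, as a forcing in $L^2X_{-5/2}$ and raise $s$ in steps $\le1$ using the slab smoothing. The paper instead invokes analytic smoothing $X_0\to X_s$ with $\varepsilon$-independent constants, although the sector angle $\theta_\varepsilon$ of Proposition~\ref{prop:semigroup-5KP} tends to $\pi/2$ as $\varepsilon\to0$.
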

\begin{proof} The proof is divided into two parts. First, we establish exponential decay in the base space $X_0$ through an observability inequality derived from Theorem \ref{Th:UCP-5KP}. Next, we extend this decay to $X_s$, for $s\ge0$, by exploiting the smoothing properties of the analytic semigroup.
	
\medskip\noindent
	\emph{\textbf{Part 1.} Decay in $X_0$ via observability.}
\vspace{0.2cm}

Fix $T>0$ and define $v(t)=\mathcal{S}_\varepsilon(t)v_0$ for $v_0\in X_{0,0}(\Omega)$. Then $v$ solves the homogeneous closed-loop system
\begin{equation}\label{eq:closed-loop-hom-5KP-X}
\partial_t v
+ \beta\,\partial_x^5 v
+ \gamma\,\partial_x^{-1}\partial_y^2 v
+ \varepsilon D_x^5 v
+ G D_x^5 G v
= 0
\quad\text{in }\Omega\times(0,T).
\end{equation}
Taking the $L^2(\Omega)$ inner product of \eqref{eq:closed-loop-hom-5KP-X} with $v(t)$, and using that $\partial_x^5$ is skew-adjoint on the mean-zero subspace, as well as the skew-adjointness of $\partial_x^{-1}\partial_y^2$ on the KP domain, we obtain
$$
\frac{d}{dt}\|v(t)\|_{L^2}^2
+ 2\varepsilon\|D_x^{5/2}v(t)\|_{L^2}^2
+ 2\|D_x^{5/2}(Gv(t))\|_{L^2}^2
= 0,
$$
since the operators $G D_x^5 G$ and $\varepsilon D_x^5$ are self-adjoint and nonnegative. Integrating over $[0,T]$ yields
$$
\|v(T)\|_{L^2}^2
+ 2\varepsilon\int_0^T \|D_x^{5/2}v(t)\|_{L^2}^2\,dt
+ 2\int_0^T \|D_x^{5/2}(Gv(t))\|_{L^2}^2\,dt
= \|v_0\|_{L^2}^2.
$$

We now derive a companion energy identity for
$w(t):=\partial_x^{-1}v(t)$.  Applying $\partial_x^{-1}$ to
\eqref{eq:closed-loop-hom-5KP-X} and using that $\partial_x^{-1}$
commutes with $\partial_x^5$, $D_x^5$, and
$\partial_x^{-1}\partial_y^2$ (all Fourier multipliers), we obtain
\begin{equation}\label{eq:w-eq-obs}
	\partial_t w
	+ \beta\,\partial_x^5 w
	+ \gamma\,\partial_x^{-1}\partial_y^2 w
	+ \varepsilon D_x^5 w
	+ G D_x^5 G w
	= -Rv,
\end{equation}
where the commutator remainder is
\begin{equation}\label{eq:R-def-obs}
	R:=[\partial_x^{-1},\,G D_x^5 G]
	= [\partial_x^{-1},G]\,D_x^5\,G
	+ G\,D_x^5\,[\partial_x^{-1},G].
\end{equation}
Since $[\partial_x^{-1},G]$ has order $-2$ in $x$ (see
\eqref{eq:comm-dxinv-G-bound}), the operator $R$ has order at
most $3$ and satisfies
$$
	\|Rf\|_{L^2}
	\le C(g)\,\|f\|_{\mathcal{H}^{3}},
	\qquad f\in\mathcal{H}_0^{3}(\Omega).
$$

Taking the $L^2(\Omega)$ inner product of \eqref{eq:w-eq-obs}
with $w$ and arguing exactly as for $v$, we obtain
\begin{equation}\label{eq:energy-dxinv-X}
	\frac12\frac{d}{dt}\|w(t)\|_{L^2}^2
	+ \varepsilon\|D_x^{5/2}w(t)\|_{L^2}^2
	+ \|D_x^{5/2}(Gw(t))\|_{L^2}^2
	= -(Rv(t),w(t))_{L^2}.
\end{equation}
Summing the $L^2$ identity for $v$ (which is exact, since no
commutator arises) with \eqref{eq:energy-dxinv-X}, and integrating
over $[0,T]$, we arrive at the \emph{modified} $X_0$ energy identity
\begin{equation}\label{eq:energy-identity-X0}
	\begin{split}
		\|v(T)\|_{X_0}^2
		&+ 2\varepsilon\int_0^T
		\bigl(\|D_x^{5/2}v\|_{L^2}^2+\|D_x^{5/2}w\|_{L^2}^2\bigr)\,dt\\
		&+ 2\int_0^T
		\bigl(\|D_x^{5/2}(Gv)\|_{L^2}^2+\|D_x^{5/2}(Gw)\|_{L^2}^2\bigr)\,dt
		= \|v_0\|_{X_0}^2
		- 2\int_0^T (Rv,w)_{L^2}\,dt.
	\end{split}
\end{equation}

\noindent{\textbf{Claim.}} \textit{There exists $C_T>0$, independent of $0<\varepsilon<1$, such that
	\begin{equation}\label{eq:observability-5KP-X0}
		\|v_0\|_{X_0}^2
		\le C_T\left(
		\varepsilon\int_0^T \|D_x^{5/2}v(t)\|_{X_0}^2\,dt
		+ \int_0^T \|D_x^{5/2}(Gv(t))\|_{X_0}^2\,dt
		\right).
	\end{equation}}
	
	Assume, by contradiction, that \eqref{eq:observability-5KP-X0} fails for some fixed $T>0$. Then there exist sequences $\varepsilon_n\in(0,1)$ and $v_0^n\in X_{0,0}(\Omega)$ such that, defining $v^n(t):=\mathcal{S}_{\varepsilon_n}(t)v_0^n$ and normalizing $\|v_0^n\|_{X_0}=1$, one has
$$
1
> n\left(
\varepsilon_n\int_0^T \|D_x^{5/2}v^n(t)\|_{X_0}^2\,dt
+ \int_0^T \|D_x^{5/2}(Gv^n(t))\|_{X_0}^2\,dt
\right).
$$
In particular,
\begin{equation}\label{eq:dissipation-vanish-X0}
\varepsilon_n\int_0^T \|D_x^{5/2}v^n\|_{X_0}^2\,dt \to 0,
\qquad
\int_0^T \|D_x^{5/2}(Gv^n)\|_{X_0}^2\,dt \to 0.
\end{equation}
By Proposition \ref{prop:uniform-Zs} with $s=0$ and $F\equiv0$, the sequence $\{v^n\}$ is bounded in
$Z^{0,T}= L^\infty(0,T;X_0)\cap L^2(0,T;X_{5/2})$. Moreover, from
\eqref{eq:closed-loop-hom-5KP-X} we have
\begin{equation}\label{eq:vn-time-derivative}
\partial_t v^n
= -\beta\,\partial_x^5 v^n
- \gamma\,\partial_x^{-1}\partial_y^2 v^n
- \varepsilon_n D_x^5 v^n
- G D_x^5 G v^n.
\end{equation}
Using the mapping properties of the operators together with the bound of $v^n$ in $L^2(0,T;X_{5/2})$, the right-hand side of \eqref{eq:vn-time-derivative} is uniformly bounded in $L^2(0,T;X_{-5/2})$. Indeed,
\[
\partial_x^5,\ \partial_x^{-1}\partial_y^2,\ D_x^5,\ G D_x^5 G
: X_{5/2}\to X_{-5/2}
\]
continuously, and
\[
\|G D_x^5 G f\|_{X_{-5/2}}
\lesssim \|D_x^{5/2}(Gf)\|_{X_0},
\]
by Lemma \ref{lem:G-properties} and duality. Hence, the Aubin-Lions-Simon compactness lemma\footnote{See, for instance, \cite{{Aubin,Simon,Lions1988}}} applied to
\[
X_{5/2}\hookrightarrow\hookrightarrow X_0\hookrightarrow X_{-5/2}
\]
yields, up to a subsequence, the existence of $v\in Z^{0,T}$ such that
\begin{equation}\label{eq:compactness-5KP-X}
v^n\to v \quad\text{strongly in }L^2(0,T;X_0),
\qquad
v^n\rightharpoonup v \quad\text{weakly in }L^2(0,T;X_{5/2}).
\end{equation}
We also record that, by the modified energy identity
\eqref{eq:energy-identity-X0},
\begin{equation}\label{eq:energy-identity-X0-n}
	\|v^n(t)\|_{X_0}^2
	= 1
	- 2\varepsilon_n\!\int_0^t\!\bigl(\|D_x^{5/2}v^n\|_{L^2}^2
	+\|D_x^{5/2}w^n\|_{L^2}^2\bigr)\,d\tau
	- 2\!\int_0^t\!\mathcal{D}_G^n(\tau)\,d\tau
	+ 2\!\int_0^t\!(Rv^n,w^n)\,d\tau,
\end{equation}
where
$w^n:=\partial_x^{-1}v^n$,
$\mathcal{D}_G^n
:=\|D_x^{5/2}(Gv^n)\|_{L^2}^2+\|D_x^{5/2}(Gw^n)\|_{L^2}^2$,
and $R=[\partial_x^{-1},GD_x^5G]$ is the commutator defined in
\eqref{eq:R-def-obs}.  The first two dissipation integrals tend
to $0$ by \eqref{eq:dissipation-vanish-X0}.

Up to the extraction of a further subsequence, we may assume $\varepsilon_n\to\varepsilon_\ast\in[0,1]$. From \eqref{eq:dissipation-vanish-X0}, we have
\[
D_x^{5/2}(Gv^n)\to0 \quad\text{in }L^2(0,T;X_0).
\]
By Poincaré’s inequality in the $x$–variable on the mean-zero subspace,
\[
\|Gv^n\|_{X_0}
\lesssim \|D_x^{5/2}(Gv^n)\|_{X_0}\to0,
\]
and therefore
$$
Gv^n\to0 \quad\text{in }L^2(0,T;X_0),
\qquad\text{hence}\qquad
Gv=0 \ \text{a.e. in }\Omega\times(0,T).
$$
Moreover,
\[
\|G D_x^5 G v^n\|_{L^2(0,T;X_{-5/2})}
\lesssim \|D_x^{5/2}(Gv^n)\|_{L^2(0,T;X_0)}\to0.
\]
So the feedback term vanishes in the limit.

Passing to the limit in \eqref{eq:vn-time-derivative} in the sense of distributions, using \eqref{eq:compactness-5KP-X} and the previous convergence, we deduce that $v$ solves
$$
\partial_t v
+ \beta\,\partial_x^5 v
+ \gamma\,\partial_x^{-1}\partial_y^2 v
+ \varepsilon_\ast D_x^5 v
= 0
\quad\text{in }\Omega\times(0,T),
$$
with zero $x$–mean and satisfying $Gv=0$ a.e. By the definition of $G$ (see \eqref{eq:G-def-prelim}), this implies that for a.e.\ $(y,t)$,
\[
g(x)\Bigl(v(x,y,t)-\int_{\mathbb{T}}g(\xi)v(\xi,y,t)\,d\xi\Bigr)=0
\quad\text{for all }x\in\mathbb{T}.
\]
Since $g(x)>0$ on $\omega$, it follows that $v$ is independent of $x$ on $\omega$, and hence
\[
\partial_x v(x,y,t)=0 \quad\text{for a.e. }(x,y,t)\in\omega\times\mathbb{R}\times(0,T).
\]
Setting $w:=\partial_x v$, we see that $w$ has zero $x$–mean and satisfies
$$
\partial_t w
+ \beta\,\partial_x^5 w
+ \gamma\,\partial_x^{-1}\partial_y^2 w
+ \varepsilon_\ast D_x^5 w
= 0,
$$
with $w=0$ on $\omega\times\mathbb{R}\times(0,T)$. By Theorem \ref{Th:UCP-5KP}, we conclude that $w\equiv0$, and since $v$ has zero mean in $x$, it follows that $v\equiv0$ in $\Omega\times(0,T)$.

It remains to show that the commutator integral
$\dis\int_0^T(Rv^n,w^n)\,dt$ tends to zero, where
$w^n:=\partial_x^{-1}v^n$.  Recall from \eqref{eq:R-def-obs} that
\[
R
= [\partial_x^{-1},G]\,D_x^5\,G
+ G\,D_x^5\,[\partial_x^{-1},G],
\]
and that $[\partial_x^{-1},G]$ has order $-2$ in $x$
(see \eqref{eq:comm-dxinv-G-bound}).  We treat the two summands
separately.  Note that the argument below uses the fact, already
established above via the unique continuation property, that $v\equiv0$
and hence $v^n\to 0$ strongly in $L^2(0,T;X_0)$.

\medskip\noindent
\emph{First summand.} Since $\|D_x^{5/2}(Gv^n)\|_{L^2(0,T;L^2)}\to 0$ by \eqref{eq:dissipation-vanish-X0} and $Gv^n$ is mean-zero, Poincar\'e's inequality in the $x$-variable gives $Gv^n\to 0$ in $L^2(0,T;\mathcal{H}^{5/2})$, hence $D_x^5(Gv^n)\to 0$ in $L^2(0,T;\mathcal{H}^{-5/2})$. As $[\partial_x^{-1},G]$ has order $-2$,
\[
[\partial_x^{-1},G]\,D_x^5(Gv^n)\to 0
\qquad\text{in }L^2(0,T;\mathcal{H}^{-1/2}).
\]
Because $\{w^n\}$ is bounded in $L^2(0,T;\mathcal{H}^{5/2}) \subset L^2(0,T;\mathcal{H}^{1/2})$ by
Proposition \ref{prop:uniform-Zs}, the $\mathcal{H}^{-1/2}$-$\mathcal{H}^{1/2}$ duality pairing yields
\begin{equation}\label{eq:first-summand-to-0}
	\int_0^T\bigl([\partial_x^{-1},G]\,D_x^5 Gv^n,\,w^n\bigr)\,dt
	\to 0.
\end{equation}

\medskip\noindent
\emph{Second summand.} We use the self-adjointness of $G$ on $L^2(\Omega)$ together with the identity
\[
(D_x^5 f,g)_{L^2}=(D_x^{5/2}f,D_x^{5/2}g)_{L^2},
\]
which holds on the $x$-mean-zero subspace. Therefore,
\begin{equation}\label{eq:second-summand-refactor}
	\int_0^T\bigl(G\,D_x^5\,[\partial_x^{-1},G]\,v^n,\;w^n\bigr)\,dt
	=
	\int_0^T\bigl(D_x^{5/2}\,[\partial_x^{-1},G]\,v^n,\;
	D_x^{5/2}(G\,w^n)\bigr)\,dt.
\end{equation}
We estimate the two factors on the right-hand side separately.

\smallskip\noindent
\textit{Strong convergence of the first factor to zero in
	$L^2(0,T;L^2)$.}
By Proposition \ref{prop:uniform-Zs}, $\{v^n\}$ is bounded in
$L^2(0,T;\mathcal{H}^{5/2})$ and, as established above,
$v^n\to 0$ strongly in $L^2(0,T;\mathcal{H}^{0})$.
By complex interpolation between $\mathcal{H}^0$ and
$\mathcal{H}^{5/2}$,
\begin{equation}\label{eq:interpolation-vn}
	v^n\to 0
	\qquad\text{strongly in }
	L^2(0,T;\mathcal{H}^{\theta})
	\quad\text{for every }\theta\in[0,\tfrac52).
\end{equation}
Since $[\partial_x^{-1},G]$ is of order $-2$ in the $x$-variable, the operator
$D_x^{5/2}[\partial_x^{-1},G]$ has order at most $\frac12$. Equivalently, for every
$\rho\in\mathbb{R}$,
\[
\|D_x^{5/2}[\partial_x^{-1},G]f\|_{\mathcal{H}^{\rho-1/2}}
\le C\,\|f\|_{\mathcal{H}^{\rho}}.
\]
Choosing any $\theta\in(\frac12,\frac52)$ in \eqref{eq:interpolation-vn}, we infer that
\begin{equation}\label{eq:first-factor-to-0}
	D_x^{5/2}[\partial_x^{-1},G]\,v^n
	\to 0
	\qquad\text{strongly in }
	L^2(0,T;\mathcal{H}^{\theta-1/2})
	\subset L^2(0,T;L^2),
\end{equation}
since $\theta-\frac12>0$.

\smallskip\noindent
\textit{Boundedness of the second factor in $L^2(0,T;L^2)$.}
Since $w^n=\partial_x^{-1}v^n$ and $\{v^n\}$ is bounded in
$L^2(0,T;\mathcal{H}^{5/2})$, we have $\{w^n\}$ bounded in
$L^2(0,T;\mathcal{H}^{5/2})$ as well.  The boundedness of $G$ on
$\mathcal{H}^{5/2}$ then gives $\{Gw^n\}$ bounded in
$L^2(0,T;\mathcal{H}^{5/2})$, and therefore
\begin{equation}\label{eq:second-factor-bdd}
	\|D_x^{5/2}(G\,w^n)\|_{L^2(0,T;L^2)}
	\le C,
\end{equation}
uniformly in $n$.

\smallskip\noindent
Applying the Cauchy-Schwarz inequality to
\eqref{eq:second-summand-refactor} and using
\eqref{eq:first-factor-to-0}-\eqref{eq:second-factor-bdd},
\begin{equation}\label{eq:second-summand-to-0}
	\Bigl|\int_0^T\bigl(G\,D_x^5\,[\partial_x^{-1},G]\,v^n,\;
	w^n\bigr)\,dt\Bigr|
	\le
	\|D_x^{5/2}\,[\partial_x^{-1},G]\,v^n\|_{L^2(0,T;L^2)}\;
	\|D_x^{5/2}(G\,w^n)\|_{L^2(0,T;L^2)}
	\to 0.
\end{equation}

Combining \eqref{eq:first-summand-to-0} and
\eqref{eq:second-summand-to-0},
\begin{equation}\label{eq:Rvnwn-to-0}
	\int_0^T(Rv^n,w^n)\,dt\to 0.
\end{equation}
Inserting \eqref{eq:dissipation-vanish-X0} and
\eqref{eq:Rvnwn-to-0} into \eqref{eq:energy-identity-X0-n}, we
conclude that
\[
\|v^n(t)\|_{X_0}^2\to 1
\qquad\text{uniformly for }t\in[0,T],
\]
and therefore
\[
\|v^n\|_{L^2(0,T;X_0)}^2
=\int_0^T\|v^n(t)\|_{X_0}^2\,dt\to T>0,
\]
which contradicts the strong convergence $v^n\to v\equiv 0$ in $L^2(0,T;X_0)$, showing the Claim. Thus, the observability inequality \eqref{eq:observability-5KP-X0} is verified, and hence \eqref{eq:exp-decay-5KP-Xs} holds for $s=0$ (see item i.\
of Remark \ref{S=0}).
	
	\medskip\noindent
	\emph{\textbf{Part 2.} Extension of the decay estimate from $X_0$ to $X_s$.}
	
	\vspace{0.2cm}
	
	For $s\ge0$, consider the inhomogeneous Fourier multiplier $\Lambda^s:=\langle D_x\rangle^s$, whose symbol is $(1+k^2)^{s/2}$ for $k\neq0$. The operator $\Lambda^s$ commutes with $\partial_x^5$, $D_x^5$, and $\partial_x^{-1}\partial_y^2$, while the commutator $[\Lambda^s,G]$ is of lower order in $x$ (see Lemma \ref{lem:G-properties}). A standard commutator argument, as in Proposition \ref{prop:semigroup-5KP}, shows that $\{\mathcal{S}_\varepsilon(t)\}_{t\ge0}$ defines an analytic semigroup on $X_{s,0}$ for each $s\ge0$, with bounds on finite time intervals that are uniform with respect to $\varepsilon$.

Fix $t_0\in(0,1]$. Analyticity on $X_0$ and the sectoriality of $\mathcal{L}_\varepsilon$ yield the smoothing estimate
$$
\|\mathcal{S}_\varepsilon(t_0)f\|_{X_s}
\le C_{s,t_0}\|f\|_{X_0},
\qquad \forall f\in X_{0,0},
$$
where $C_{s,t_0}$ is independent of $\varepsilon\in(0,1)$. Using the semigroup property together with \eqref{eq:X0-exp}, for $t\ge t_0$ we obtain
\begin{align*}
\|\mathcal{S}_\varepsilon(t)v_0\|_{X_s}
&= \|\mathcal{S}_\varepsilon(t_0)\mathcal{S}_\varepsilon(t-t_0)v_0\|_{X_s}\\
&\le C_{s,t_0}\|\mathcal{S}_\varepsilon(t-t_0)v_0\|_{X_0}\\
&\le C_{s,t_0} C e^{-\lambda(t-t_0)}\|v_0\|_{X_0}\\
&\le C_{s,t_0} C e^{-\lambda(t-t_0)}\|v_0\|_{X_s},
\end{align*}
where we used the monotonicity $\|v_0\|_{X_0}\le\|v_0\|_{X_s}$ for $s\ge0$. Since $\mathcal{S}_\varepsilon(t)$ is uniformly bounded on $X_s$ for $t\in[0,t_0]$, enlarging the constant if necessary yields
\[
\|\mathcal{S}_\varepsilon(t)v_0\|_{X_s}
\le C' e^{-\lambda t}\|v_0\|_{X_s},
\qquad \forall t\ge0,
\]
with $C'$ independent of $\varepsilon$. This is precisely \eqref{eq:exp-decay-5KP-Xs}.
\end{proof}

\begin{remark}\label{S=0} We are in a position to present some remarks.
\begin{itemize}
\item[i.] Although this is not made explicit in the proof of the previous result, observe that in $X_0$ the decay estimate \eqref{eq:exp-decay-5KP-Xs} follows directly from the \emph{observability inequality} \eqref{eq:observability-5KP-X0}.  Indeed, assuming this inequality and combining it with
\eqref{eq:energy-identity-X0}, we obtain
\[
\|v(T)\|_{X_0}^2
\le \left(1-\frac{2}{C_T}\right)\|v_0\|_{X_0}^2.
\]
The semigroup property then yields exponential decay in $X_0$
\begin{equation}\label{eq:X0-exp}
\|\mathcal{S}_\varepsilon(t)v_0\|_{X_0}
\le C e^{-\lambda t}\|v_0\|_{X_0},
\qquad \forall t\ge0,
\end{equation}
for some constants $C,\lambda>0$ independent of $\varepsilon$. 
\item[ii.] We denote by $S(t)v_0$ the solution of \eqref{eq:limit-closed-loop-5KP-X} in the homogeneous case $F\equiv0$. The family $\{S(t)\}_{t\ge0}$ defines a $C_0$–semigroup on $X_{s,0}(\Omega)$ for every $s\ge0$. In particular, for $s\ge0$, the decay estimate \eqref{eq:exp-decay-5KP-Xs} can be passed to the limit as $\varepsilon\to0$ along the approximating semigroups $\{\mathcal{S}_\varepsilon(t)\}_{t\ge0}$, yielding exponential stability of the unregularized linear flow in $X_s$. This property will play a key role in the nonlinear stabilization result established later.
\end{itemize}
\end{remark}

Next, we see that the adjoint flow satisfies the same energy identity as the forward flow, with the same dissipation functional. More precisely,
\begin{proposition}\label{prop:adjoint-decay}
	Let \(0<\varepsilon<1\) and \(s\ge 0\). Then the adjoint semigroup
	\(\{\mathcal{S}_\varepsilon^*(t)\}_{t\ge0}\), generated by
	\(-\mathcal{L}_\varepsilon^*\) on \(X_{0,0}(\Omega)\), satisfies the same
	uniform exponential decay estimate as \(\mathcal{S}_\varepsilon(t)\). More precisely,
	there exist constants \(C,\lambda>0\), independent of \(\varepsilon\), such that
	\begin{equation}\label{eq:adjoint-decay}
		\|\mathcal{S}_\varepsilon^*(t)v_0\|_{X_s}
		\le Ce^{-\lambda t}\|v_0\|_{X_s},
		\qquad \forall\, t\ge0,\quad v_0\in X_{s,0}(\Omega).
	\end{equation}
\end{proposition}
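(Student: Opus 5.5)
The plan is to mirror the proof of Proposition \ref{prop:exp-decay-5KP}, using that $\mathcal{L}_\varepsilon^*$ differs from $\mathcal{L}_\varepsilon$ only in the sign of the skew-adjoint part. The adjoint problem $\partial_t\varphi+\mathcal{L}_\varepsilon^*\varphi=0$ reads
\[
\partial_t\varphi-\beta\,\partial_x^5\varphi+\gamma\,\partial_x^{-1}\partial_y^2\varphi\cdot(-1)+\varepsilon D_x^5\varphi+GD_x^5G\varphi=0,
\]
i.e.\ the closed-loop system with $(\beta,\gamma)$ replaced by $(-\beta,-\gamma)$. Since $-\beta\ne0$ and $-\gamma\in\{\pm1\}$, every structural assumption used earlier still holds. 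In particular, Lemma \ref{lem:weighted-5KP}, Propositions \ref{prop:propagation}, \ref{prop:Hs-estimate-prelim}, \ref{prop:uniform-Zs} and \ref{prop:semigroup-5KP} apply verbatim, with constants depending on $|\beta|$ and $g$ only. Thus $-\mathcal{L}_\varepsilon^*$ generates an analytic semigroup on $X_{0,0}$ that leaves every $X_{s,0}$ invariant and obeys the uniform $Z^{s,T}$ bound.

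\textbf{Step 1 (energy identity).} Pair the adjoint equation with $\varphi$ and with $\partial_x^{-1}\varphi$. The skew-adjoint parts drop out, so we obtain exactly the modified identity \eqref{eq:energy-identity-X0}, with the same dissipation $\varepsilon\|D_x^{5/2}\cdot\|^2+\|D_x^{5/2}G\cdot\|^2$. The remainder is $-2\int_0^T(R\varphi,\partial_x^{-1}\varphi)\,dt$, where $R=[\partial_x^{-1},GD_x^5G]$ is unchanged because it does not involve $\beta$ or $\gamma$.

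\textbf{Step 2 (observability).} Prove the analogue of \eqref{eq:observability-5KP-X0} by the same contradiction and compactness argument. Take normalized data and use the uniform $Z^{0,T}$ bounds together with Aubin--Lions--Simon to extract a limit $\varphi$. The limit satisfies $G\varphi=0$ and solves the dispersive equation with coefficients $(-\beta,-\gamma)$ and $\varepsilon_*\ge0$. Then $\partial_x\varphi$ vanishes on $\omega\times\mathbb{R}\times(0,T)$, and Theorem \ref{Th:UCP-5KP} applies because it is stated for arbitrary $\beta\in\mathbb{R}\setminus\{0\}$ and $\gamma\in\{\pm1\}$. This gives $\varphi\equiv0$. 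The commutator integral tends to zero by exactly the two-summand splitting used before, and this contradicts $\|\varphi^n\|_{L^2(0,T;X_0)}^2\to T$.

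\textbf{Step 3 (decay in $X_0$).} Observability combined with the energy identity gives the $X_0$ decay, as in Remark \ref{S=0}(i).

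\textbf{Step 4 (lift to $X_s$).} Lift the decay to $X_s$ with Part 2 of the earlier proof. Use the $\varepsilon$-uniform smoothing $\|\mathcal{S}^*_\varepsilon(t_0)f\|_{X_s}\le C_{s,t_0}\|f\|_{X_0}$ and the semigroup property. The constants $C,\lambda$ can be made common to both semigroups by taking the worse of the two.

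An alternative for Step 3 is duality: $\|\mathcal{S}_\varepsilon^*(t)\|_{\mathcal{L}(H)}=\|\mathcal{S}_\varepsilon(t)\|_{\mathcal{L}(H)}$ on a Hilbert space $H$. The catch is that the adjoint was taken in $L^2$, while $X_0$ carries the extra $\partial_x^{-1}$ part of the norm. So duality gives the $L^2$ bound immediately, but not the $X_0$ bound, and the direct argument above is safer.

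The main obstacle is checking that the $\varepsilon$-uniform smoothing constant $C_{s,t_0}$ in Step 4 genuinely holds for the adjoint. The earlier proof asserts it via analyticity, but the sector angle degenerates as $\varepsilon\to0$. I would instead justify it with Proposition \ref{prop:uniform-Zs} applied to the adjoint flow. That proposition gives $L^2(0,t_0;X_{5/2})$ control from $X_0$ data, so some time $\tau\le t_0$ has $\|\varphi(\tau)\|_{X_{5/2}}\lesssim\|\varphi_0\|_{X_0}$. Iterating this in steps of $5/2$ derivatives and propagating forward with the $X_s$ energy bound yields the needed estimate uniformly in $\varepsilon$.
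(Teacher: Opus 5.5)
Your proposal is correct and follows essentially the paper's own route: $\mathcal{L}_\varepsilon^*$ has the same dissipative part as $\mathcal{L}_\varepsilon$, so the energy identity, the uniform $Z^{s,T}$ bounds and the compactness--uniqueness proof of observability carry over, Theorem~\ref{Th:UCP-5KP} applies to the sign-flipped limit equation, and the decay is then lifted to $X_s$ as in Proposition~\ref{prop:exp-decay-5KP}. Two of your additions differ from the paper: your justification of the $\varepsilon$-uniform $X_0\to X_s$ smoothing via the $5/2$-gain of Proposition~\ref{prop:uniform-Zs} (a good intermediate time, then forward propagation) is a sensible tightening of a step the paper attributes to analyticity, while your caution about duality is unnecessary, because on $x$-mean-zero functions $|k|\ge 1$ gives $\|\partial_x^{-1}f\|_{L^2}\le\|f\|_{L^2}$, so the $X_0$ and $L^2$ norms are equivalent and duality already yields the case $s=0$ with the constant at most doubled.
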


\begin{proof}
	We begin by observing that \(\mathcal{L}_\varepsilon^*\) preserves the \(x\)-mean-zero subspace \(X_{0,0}(\Omega)\). Moreover,
	\[
	\mathcal{L}_\varepsilon^*
	=\varepsilon D_x^5+GD_x^5G+\beta\,\partial_x^5-\gamma\,\partial_x^{-1}\partial_y^2,
	\]
	so that the dissipative part of the adjoint generator coincides with that of \(\mathcal{L}_\varepsilon\), whereas the skew-adjoint part appears only with the opposite sign in the transverse term. In particular, for every
	\(f\in X_{5,0}(\Omega)\),
	\[
	\mathrm{Re}\,(\mathcal{L}_\varepsilon^*f,f)_{L^2}
	=\varepsilon\|D_x^{5/2}f\|_{L^2}^2
	+\|D_x^{5/2}(Gf)\|_{L^2}^2.
	\]
	It follows that the smoothing estimate obtained from the localized dissipation, together with the propagation-of-regularity argument developed earlier for \(\mathcal{S}_\varepsilon(t)\), remains valid for \(\mathcal{S}_\varepsilon^*(t)\) with constants independent of \(\varepsilon\). In particular, the adjoint trajectories enjoy the same uniform bounds in the corresponding \(Z^{s,T}\)-type spaces.
	
	We now claim that the observability argument leading to Proposition \ref{prop:exp-decay-5KP} carries over to the adjoint flow. Indeed, the contradiction procedure only uses the uniform energy estimate, the propagated regularity, the compactness properties of the associated trajectories, and the unique continuation statement for the limiting equation. For the adjoint dynamics, the limit equation takes the form
	\[
	\partial_t v+\beta\,\partial_x^5 v-\gamma\,\partial_x^{-1}\partial_y^2 v
	+\varepsilon_*D_x^5 v=0,
	\]
	for some \(\varepsilon_*\in[0,1]\). This equation differs from the forward one only by the sign of the skew-adjoint component \(\partial_x^{-1}\partial_y^2\). Since Theorem \ref{Th:UCP-5KP} is formulated without any restriction on the signs of \(\beta\) and \(\gamma\), the same unique continuation property applies to this limit equation as well. Therefore, the same contradiction argument yields a uniform observability inequality for the adjoint semigroup, and hence the same exponential stability estimate follows. This proves \eqref{eq:adjoint-decay}.
\end{proof}

The last result of this section gives the decay properties when $\varepsilon=0$.
\begin{proposition}\label{prop:limit-semigroup-decay}
	Let \(s\ge 0\). Then the \(C_0\)-semigroup \(\{\mathcal{S}(t)\}_{t\ge0}\) on
	\(X_{s,0}(\Omega)\), constructed in Proposition \ref{prop:BS-limit-5KP-X},
	satisfies the exponential decay estimate
	\begin{equation}\label{eq:limit-decay-explicit}
		\|\mathcal{S}(t)v_0\|_{X_s}
		\le Ce^{-\lambda t}\|v_0\|_{X_s},
		\qquad \forall\, t\ge0,\quad v_0\in X_{s,0}(\Omega),
	\end{equation}
	where \(C,\lambda>0\) are the same constants as in
	Proposition \ref{prop:exp-decay-5KP}.
\end{proposition}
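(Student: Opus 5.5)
The plan is to pass to the limit $\varepsilon\to0$ in the uniform decay estimate of Proposition~\ref{prop:exp-decay-5KP}, using the strong convergence furnished by the Bona--Smith construction of Proposition~\ref{prop:BS-limit-5KP-X}.

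Fix $s\ge0$ and $v_0\in X_{s,0}(\Omega)$. Take a decreasing sequence $\varepsilon_n\to0$ and the Gaussian-smoothed data $v_0^n:=v_0^{\varepsilon_n}$, so that $v_0^n\to v_0$ in $X_s$ by \eqref{eq:BS-smoothing-est-5KP-X}. Set $v^n(t):=\mathcal{S}_{\varepsilon_n}(t)v_0^n$. By Proposition~\ref{prop:exp-decay-5KP}, with $C,\lambda$ independent of $n$,
\[
\|v^n(t)\|_{X_s}\le Ce^{-\lambda t}\|v_0^n\|_{X_s},\qquad t\ge0 .
\]

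Next, fix an arbitrary $T>0$. Part ii of the proof of Proposition~\ref{prop:BS-limit-5KP-X}, with $F\equiv0$, shows that $\{v^n\}$ is Cauchy in $Z^{s,T}$. Its limit is the unique solution $v\in Z^{s,T}$ of \eqref{eq:limit-closed-loop-5KP-X}, which by definition equals $\mathcal{S}(t)v_0$. In particular, $\sup_{t\in[0,T]}\|v^n(t)-\mathcal{S}(t)v_0\|_{X_s}\to0$, because the $Z^{s,T}$ norm controls $L^\infty(0,T;X_s)$. For each fixed $t\in[0,T]$ we therefore get
\[
\|\mathcal{S}(t)v_0\|_{X_s}=\lim_n\|v^n(t)\|_{X_s}\le Ce^{-\lambda t}\lim_n\|v_0^n\|_{X_s}=Ce^{-\lambda t}\|v_0\|_{X_s}.
\]
Since $T$ is arbitrary, this gives \eqref{eq:limit-decay-explicit} for all $t\ge0$, with the same constants $C,\lambda$.

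The main point to check is that the identification of the limit does not depend on the approximating sequence. Uniqueness in $Z^{s,T}$, which follows from the a priori bound \eqref{eq:apriori-limit-5KP-X}, ensures this, and it also makes the limits on different intervals $[0,T]$ consistent with each other. A secondary point is the uniformity of the constant in Proposition~\ref{prop:exp-decay-5KP} with respect to $\varepsilon$. That uniformity comes from the observability claim, which was proved by a compactness argument allowing $\varepsilon_n\to\varepsilon_\ast\in[0,1]$, so no further work is needed here. Once the estimate holds for the smooth $v_0^n$, the statement extends to all of $X_{s,0}$ because $\mathcal{S}(t)$ is a bounded linear operator on that space.
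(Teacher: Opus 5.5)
Your proposal is correct and follows essentially the same route as the paper. It takes the $\varepsilon$-uniform decay of $\mathcal{S}_{\varepsilon_n}(t)v_0^n$ from Proposition~\ref{prop:exp-decay-5KP}, uses the Bona--Smith strong convergence $v^n\to\mathcal{S}(\cdot)v_0$ in $C([0,T];X_s)$, and lets $n\to\infty$ pointwise in $t$. Your closing sentence about extending from smooth data by boundedness of $\mathcal{S}(t)$ is unnecessary, since your limit argument already handles arbitrary $v_0\in X_{s,0}(\Omega)$ directly.
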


\begin{proof}
	Fix \(v_0\in X_{s,0}(\Omega)\). Let \(\{v_0^n\}\subset X_{5,0}(\Omega)\) and
	\(\{\varepsilon_n\}\subset(0,1)\) be the approximating sequences introduced in the
	Bona-Smith construction of Proposition \ref{prop:BS-limit-5KP-X}, so that
	\[
	v_0^n\to v_0 \quad \text{in } X_s,
	\qquad
	\varepsilon_n\to 0.
	\]
	For each \(n\), define
	\[
	v^n(t):=\mathcal{S}_{\varepsilon_n}(t)v_0^n.
	\]
	Since the exponential decay estimate \eqref{eq:exp-decay-5KP-Xs} is uniform with
	respect to \(\varepsilon\in(0,1)\), we have
	\[
	\|v^n(t)\|_{X_s}
	\le Ce^{-\lambda t}\|v_0^n\|_{X_s},
	\qquad \forall\, t\ge0,
	\]
	with constants \(C,\lambda>0\) independent of \(n\).
	
	On the other hand, by the strong convergence statement
	\eqref{eq:strong-conv-semigroup-X}, one has
	\[
	v^n(t)\to \mathcal{S}(t)v_0
	\quad \text{in } X_s,
	\]
	locally uniformly for \(t\) in bounded intervals. Therefore, for each fixed
	\(t\ge0\),
	\[
	\|\mathcal{S}(t)v_0\|_{X_s}
	=\lim_{n\to\infty}\|v^n(t)\|_{X_s}
	\le Ce^{-\lambda t}\lim_{n\to\infty}\|v_0^n\|_{X_s}
	=Ce^{-\lambda t}\|v_0\|_{X_s}.
	\]
	This proves \eqref{eq:limit-decay-explicit}.
\end{proof}

\subsection{Nonlinear stabilization: Proof of Theorem \ref{thm:stab5KP}}The global well-posedness statement and the contraction framework have
already been established in Theorem \ref{thm:gwp-kp5-X}. It remains only
to verify the exponential decay estimate \eqref{eq:nonlin-exp-decay-kp5}.

Let $u\in\mathcal{X}$ be the unique global solution given by
Theorem \ref{thm:gwp-kp5-X}, with $\|u_0\|_{X_s}\le\rho$ and
$\|u\|_{\mathcal{X}}\le R$. By the definition of the $\mathcal{X}$-norm,
\[
\vertiii{u}_n \le e^{-\lambda n}\|u\|_{\mathcal{X}}
\le e^{-\lambda n}R,
\qquad \forall\,n\ge0.
\]
For $t\in[n,n+1]$, one obtains
\[
\|u(t)\|_{X_s}
\le \vertiii{u}_n
\le R\,e^{-\lambda n}
\le R\,e^{\lambda}\,e^{-\lambda t}.
\]
Since $R$ depends only on $s$ and the universal constants $\tilde c_0,\tilde c_1,C_\lambda$, while $\|u_0\|_{X_s}\le\rho=R/(2\tilde c_0)$,
we conclude that
\[
\|u(t)\|_{X_s}
\le C\,e^{-\lambda t}\|u_0\|_{X_s}
\qquad \forall\,t\ge0,
\]
with $C:=2\tilde c_0 e^{\lambda}$, which is precisely
\eqref{eq:nonlin-exp-decay-kp5}. \qed

\subsection{Exact controllability: Linear result}

Local exact controllability for \eqref{control5KP_q} follows from the Hilbert Uniqueness Method (HUM) of \cite{Lions1988}, combined with smoothing estimates in $Z^{s,T}$ and a fixed-point argument in $X_{s,0}(\Omega)$. We consider
\begin{equation}\label{eq:linear-control}
\left\{
\begin{aligned}
\partial_t v
+\beta\,\partial_x^5 v
+\gamma\,\partial_x^{-1}\partial_y^2 v
+G D_x^5 G v
&= G D_x^{5/2} h,\\
v(0) &= v_0,
\end{aligned}
\right.
\end{equation}
with $v_0\in X_{s,0}$ and $h\in L^2(0,T;X_{s,0})$.

On the \(x\)-mean-zero subspace \(X_{0,0}(\Omega)\), both \(G\) and \(D_x^{5/2}\) are self-adjoint on \(L^2(\Omega)\); see
Lemma \ref{lem:G-properties}\,(ii) and the definition of the Fourier multiplier \(D_x^r\). In general, however, these operators do not commute. Therefore, if one defines the control operator by
	\[
	B:=GD_x^{5/2},
	\]
	then its \(L^2(\Omega)\)-adjoint is
	\[
	B^*=(GD_x^{5/2})^*=D_x^{5/2}G,
	\]
	and in particular \(B^*\neq B\) in general.
	
	Accordingly, the controlled equation \eqref{eq:linear-control} is written with input operator \(B=GD_x^{5/2}\), whereas the observation term arising in the duality identity for the adjoint system is \(B^*=D_x^{5/2}G\). The observability inequality
\eqref{eq:observability-5KP-X0} is thus naturally expressed in terms of
	\[
	\|D_x^{5/2}(Gw)\|_{L^2(\Omega)}^2=\|B^*w\|_{L^2(\Omega)}^2,
	\]
	which is the quantity required in the HUM construction. With this notation, the control term in \eqref{control5KP_q} may be written as
	\[
	(D_x^{5/2}G)^*q=(B^*)^*q=Bq=GD_x^{5/2}q,
	\]
	which is consistent with the form of the controlled equation.

Note that the adjoint system reads
\begin{equation}\label{eq:adjoint}
\left\{
\begin{aligned}
-\partial_t w
+\beta\,\partial_x^5 w
+\gamma\,\partial_x^{-1}\partial_y^2 w
+G D_x^5 G w &= 0,\\
w(T) &= w_T.
\end{aligned}
\right.
\end{equation}
By time reversal, this coincides with the closed-loop system, hence
$$
w\in Z^{s,T}, \qquad
\|w\|_{Z^{s,T}}\le C\|w_T\|_{X_s}.
$$
A standard integration by parts yields
$$
(v(T),w_T) - (v_0,w(0))
= \int_0^T (h, D_x^{5/2} G w)\,dt.
$$
So, define the HUM operator
$$
\Lambda w_T := \Phi(w_T)(T),
$$
where $\Phi(w_T)$ solves \eqref{eq:linear-control} with $v_0=0$ and $h=D_x^{5/2}Gw$. Then
$$
(\Lambda w_T,w_T)
= \int_0^T \|D_x^{5/2}(Gw)\|_{L^2}^2\,dt.
$$
Using the observability inequality,
$$
\|w_T\|_{L^2}^2
\le C \int_0^T \|D_x^{5/2}(Gw)\|_{L^2}^2\,dt,
$$
we deduce that $\Lambda$ is bounded, symmetric, and coercive on $L^2$, hence an isomorphism (and also on $X_{s,0}$ by interpolation). 

We may now complete the linear HUM construction and formulate the exact controllability result in the natural KP-adapted Sobolev scale.
\begin{proposition}
	Let \(T>0\) and \(s\ge 0\). For any
	\(v_0,v_1\in X_{s,0}(\Omega)\), there exists a control
	\(h\in L^2(0,T;X_{s,0}(\Omega))\) such that the solution of
	\eqref{eq:linear-control} satisfies \(v(T)=v_1\). Moreover,
	\begin{equation}\label{eq:linear-control-cost}
		\|h\|_{L^2(0,T;X_s)}
		\le C(s,T)\bigl(\|v_0\|_{X_s}+\|v_1\|_{X_s}\bigr).
	\end{equation}
\end{proposition}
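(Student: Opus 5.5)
The proof follows the standard HUM reduction to null-type controllability, using the Gramian operator $\Lambda$ introduced just before the statement. By linearity, write the solution of \eqref{eq:linear-control} as $v=\mathcal{S}(\cdot)v_0+v_h$, where $v_h$ solves \eqref{eq:linear-control} with zero initial datum. It then suffices to find $h$ with $v_h(T)=v_1-\mathcal{S}(T)v_0=:z_T$. By Proposition \ref{prop:BS-limit-5KP-X} and the decay \eqref{eq:decay-limit-5KP-X}, $\|z_T\|_{X_s}\le \|v_1\|_{X_s}+C\|v_0\|_{X_s}$. We look for the control in HUM form $h=D_x^{5/2}Gw$, where $w$ solves the adjoint system \eqref{eq:adjoint} with final datum $w_T:=\Lambda^{-1}z_T$. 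Then $v_h(T)=\Lambda w_T=z_T$ by definition of $\Lambda$.

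The key steps, in order, are as follows.

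(1) Prove the observability inequality for the adjoint system \eqref{eq:adjoint} with $\varepsilon=0$:
\[
\|w_T\|_{X_0}^2\le C_T\int_0^T\|D_x^{5/2}(Gw)\|_{X_0}^2\,dt.
\]
To get it, reverse time and apply the compactness–uniqueness argument of Proposition \ref{prop:exp-decay-5KP} and Proposition \ref{prop:adjoint-decay} with $\varepsilon_n\equiv0$. The limiting equation changes sign in $\gamma$ only, and Theorem \ref{Th:UCP-5KP} covers both signs. Together with the duality identity preceding the statement, this gives coercivity of $\Lambda$ on $X_{0,0}$. By Lax–Milgram, $\Lambda$ is an isomorphism of $X_{0,0}$, which settles the case $s=0$ with
\[
\|h\|_{L^2(0,T;X_0)}\le C\|w\|_{Z^{0,T}}\le C\|w_T\|_{X_0}\le C\|z_T\|_{X_0}.
\]

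(2) Lift this to $X_{s,0}$. I would first show that $\Lambda$ maps $X_{s,0}$ boundedly into itself. For $w_T\in X_{s,0}$, Proposition \ref{prop:BS-limit-5KP-X} (applied to the time-reversed adjoint) gives $w\in Z^{s,T}$. Hence $h=D_x^{5/2}Gw\in L^2(0,T;X_s)$ by Lemma \ref{lem:G-properties}, since $w\in L^2(0,T;X_{s+5/2})$. The forcing $GD_x^{5/2}h$ then lies in $L^2(0,T;X_{s-5/2})$ by Lemma \ref{lem:G-properties}, so \eqref{eq:apriori-limit-5KP-X} yields $\Lambda w_T\in X_s$.

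(3) Show that $\Lambda^{-1}$ is bounded on $X_{s,0}$, i.e. a regularity statement: if $\Lambda w_T\in X_s$ then $w_T\in X_s$. This is the main obstacle, because coercivity only holds in $X_0$. I would first try conjugating by $\Lambda^s=\langle D_x\rangle^s$. The operator $\Lambda^s\Lambda\Lambda^{-s}$ equals $\Lambda$ plus a remainder built from the commutators $[\Lambda^s,G]$ of Lemma \ref{lem:G-properties}(iv), which lowers order by one. Through the smoothing estimates of Proposition \ref{prop:propagation}, this remainder is compact on $X_{0,0}$. Then $\Lambda^s\Lambda\Lambda^{-s}$ is Fredholm of index zero on $X_{0,0}$, and it is injective because $\Lambda$ is injective. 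Therefore it is invertible, which gives
\[
\|w_T\|_{X_s}\le C\|\Lambda w_T\|_{X_s}.
\]
The same argument applies to $\partial_x^{-1}w_T$, using the commutator bound \eqref{eq:comm-dxinv-G-bound}. If this is delicate, a fallback is to prove the statement for integer $s$ by induction (using the differentiated equation and the lower-order remainder $E_s$) and then interpolate, as hinted before the statement.

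(4) Combine the previous steps:
\[
\|h\|_{L^2(0,T;X_s)}\le C\|w\|_{Z^{s,T}}\le C\|w_T\|_{X_s}\le C\|z_T\|_{X_s}\le C(s,T)\bigl(\|v_0\|_{X_s}+\|v_1\|_{X_s}\bigr),
\]
which is \eqref{eq:linear-control-cost}.
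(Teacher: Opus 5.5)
Your reduction and your steps (2) and (4) coincide with the paper's proof. The paper sets $g=v_1-\mathcal{S}(T)v_0$, $w_T=\Lambda^{-1}g$ and $h=D_x^{5/2}Gw$, and runs the chain $\|h\|_{L^2(0,T;X_s)}\le C\|w\|_{Z^{s,T}}\le C\|w_T\|_{X_s}\le C\|g\|_{X_s}$. It takes the boundedness of $\Lambda^{-1}$ on $X_{s,0}$ from the preceding discussion (``by interpolation'', and later via the duality argument of Flores--Smith and Linares--Rosier).

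You rightly isolate this invertibility as the real issue, but your argument for it in (3) fails on $\Omega=\mathbb{T}\times\mathbb{R}$. Every operator involved commutes with the $y$-translations $\tau_a f(x,y)=f(x,y-a)$: $\mathcal{S}(t)$, the adjoint flow, $G$, $D_x^{5/2}$ and $\langle D_x\rangle^s$. Hence so does your remainder $K=[\langle D_x\rangle^s,\Lambda]\langle D_x\rangle^{-s}$. A compact operator commuting with all $\tau_a$ must vanish. Indeed $\tau_a f\rightharpoonup 0$ as $a\to\infty$, so compactness would force $\|K\tau_a f\|\to 0$, whereas $\|K\tau_a f\|=\|\tau_a Kf\|=\|Kf\|$. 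So $K$ can be compact only if $K=0$. The one-derivative gain in $x$ gives no compactness in the noncompact variable $y$, and the Fredholm-index-zero plus injectivity step collapses. Working fiberwise in $\eta$ would restore compactness but lose uniformity in $\eta$. The same remark applies to the Aubin--Lions embedding $X_{5/2}\hookrightarrow X_0$ behind the observability Claim that you propose to rerun in (1). There you can only quote the inequality, as the paper's proof of this statement does.

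Your fallback, made precise, repairs (3) without any compactness. On mean-zero functions $\partial_x^{-1}$ is a bounded multiplier, and $\Lambda$ commutes with every Fourier multiplier in $y$. So only the $x$-regularity $\|J^\sigma\Lambda^{-1}g\|_{L^2}$, with $J^\sigma:=\langle D_x\rangle^\sigma$, is at stake.

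First, obtain a commutator bound. Compare $J^\sigma w$ with the adjoint solution issued from $J^\sigma w_T$, and do the same for the forward problem driven by the two controls. Every commutator term is then a forcing in $L^2(0,T;\mathcal{H}^{-5/2})$ when $w_T\in\mathcal{H}^{\sigma-1}$, by Lemma \ref{lem:G-properties}(iv) and the $5/2$-smoothing of Proposition \ref{prop:BS-limit-5KP-X}. This gives $\|[J^\sigma,\Lambda]f\|_{L^2}\le C\|f\|_{\mathcal{H}^{\sigma-1}}$.

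Second, write $\Lambda J^\sigma w=J^\sigma\Lambda w-[J^\sigma,\Lambda]w$ and use $\|\Lambda^{-1}\|_{\mathcal{L}(X_{0,0})}\le C_T$. This yields $\|w\|_{\mathcal{H}^\sigma}\le C(\|\Lambda w\|_{\mathcal{H}^\sigma}+\|w\|_{\mathcal{H}^{\sigma-1}})$. Induction from $\sigma=0$ in steps of length at most one then gives $\|w\|_{X_\sigma}\le C\|\Lambda w\|_{X_\sigma}$.

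Third, surjectivity onto $X_{\sigma,0}$ follows from the method of continuity applied to $(1-\theta)\mathrm{Id}+\theta\Lambda$. These operators are uniformly coercive on $L^2$ and satisfy the same commutator bound. Your interpolation between integer levels is then legitimate. By contrast, interpolating from the $L^2$ isomorphism alone, as the text before the statement suggests, cannot give the bound, because it needs an endpoint estimate at some level $\ge s$.
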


\begin{proof}
	Let
	\[
	g:=v_1-\mathcal{S}(T)v_0\in X_{s,0}(\Omega),
	\]
	and define
	\[
	w_T:=\Lambda^{-1}g\in X_{s,0}(\Omega).
	\]
	Let \(w\) be the solution of the adjoint system with terminal condition
	\(w(T)=w_T\), and set
	\[
	h:=D_x^{5/2}Gw.
	\]
	By the boundedness of \(\Lambda^{-1}\) on \(X_{s,0}\) and
	Proposition  \ref{prop:BS-limit-5KP-X}, one has
	\[
	\|h\|_{L^2(0,T;X_s)}
	\le C\|w\|_{Z^{s,T}}
	\le C\|w_T\|_{X_s}
	\le C\|g\|_{X_s}.
	\]
	Since \(\mathcal{S}(T)\) is bounded on \(X_{s,0}(\Omega)\), it follows that
	\[
	\|g\|_{X_s}\le C(T)\bigl(\|v_0\|_{X_s}+\|v_1\|_{X_s}\bigr),
	\]
	which yields \eqref{eq:linear-control-cost}. The identity \(v(T)=v_1\)
	follows from the definition of \(\Lambda\).
\end{proof}

\subsection{Nonlinear controllability: Proof of Theorem \ref{thm:control5KP}}
Consider the control system
\begin{equation}\label{eq:nonlin-control}
\left\{
\begin{aligned}
\partial_t u
+\beta\,\partial_x^5 u
+\gamma\,\partial_x^{-1}\partial_y^2 u
+G D_x^5 G u
&= -u\,\partial_x u + G D_x^{5/2} q,\\
u(0)=u_0,\quad u(T)=u_1.
\end{aligned}
\right.
\end{equation}
For each \(w_T\in X_{s,0}(\Omega)\), let \(w\) denote the solution of the adjoint system \eqref{eq:adjoint} with terminal condition \(w(T)=w_T\), and define
	\[
	h_{w_T}:=D_x^{5/2}Gw \in L^2(0,T;X_{s,0}).
	\]
	Let \(\Phi(w_T)\in Z^{s,T}\) be the corresponding solution of the linear controlled system \eqref{eq:linear-control} with initial datum \(v_0=0\) and control \(h=h_{w_T}\).  The associated HUM operator is then given by
	\[
	\Lambda w_T:=\Phi(w_T)(T)\in X_{s,0}.
	\]
	
	By Proposition \ref{prop:BS-limit-5KP-X}, the map
	\[
	w_T\longmapsto \Phi(w_T)
	\]
	is continuous from \(X_{s,0}\) into \(Z^{s,T}\). Moreover, the observability inequality \eqref{eq:observability-5KP-X0} implies that \(\Lambda\) is bounded, symmetric, and coercive on \(L^2(\Omega)\). Indeed,
$$
		(\Lambda w_T,w_T)_{L^2}
		=\int_0^T \|D_x^{5/2}(Gw)\|_{L^2}^2\,dt
		\ge C_T^{-1}\|w_T\|_{L^2}^2 .
$$
	Hence, by the Lax-Milgram theorem, \(\Lambda\) is an isomorphism on \(L^2(\Omega)\).
	
	We next show that \(\Lambda\) acts continuously on \(X_{s,0}\) and that its inverse
	preserves the same regularity. If \(w_T\in X_{s,0}\), then Proposition  \ref{prop:BS-limit-5KP-X}
	yields \(w\in Z^{s,T}\), so that \(h_{w_T}=D_x^{5/2}Gw\in L^2(0,T;X_{s,0})\), and therefore
	\(\Phi(w_T)\in Z^{s,T}\). In particular,
	\[
	\|\Lambda w_T\|_{X_s}
	=\|\Phi(w_T)(T)\|_{X_s}
	\le C(s,T)\|w_T\|_{X_s},
	\]
	so that \(\Lambda:X_{s,0}\to X_{s,0}\) is bounded. Let now \(g\in X_{s,0}\subset L^2(\Omega)\).
	Since \(\Lambda\) is invertible on \(L^2(\Omega)\), there exists \(w_T\in L^2(\Omega)\) such that
	\(\Lambda w_T=g\). Invoking the adjoint observability estimate in \(X_{-s}\)-\(X_s\) duality,
	as in \cite[Prop. 4.1]{FloreSmith2019} and \cite[Sec. 4]{LR}, one obtains
$$
		\|w_T\|_{X_s}\le C(s,T)\|\Lambda w_T\|_{X_s}
		= C(s,T)\|g\|_{X_s}.
$$
	It follows that \(w_T\in X_{s,0}\), and consequently \(\Lambda^{-1}:X_{s,0}\to X_{s,0}\) is bounded:
	\begin{equation}\label{eq:Lambda-inv-bound-rev}
		\|\Lambda^{-1}g\|_{X_s}\le C(s,T)\|g\|_{X_s}.
	\end{equation}
	
	We now turn to the nonlinear problem. For \(u\in Z^{s,T}\), set $f_u:=-u\,\partial_x u$, and define
	\[
	g_u:=u_1-\mathcal{S}(T)u_0
	+\int_0^T \mathcal{S}(T-\tau)f_u(\tau)\,d\tau.
	\]
	Let \(w_u\) be the solution of the adjoint system \eqref{eq:adjoint} with terminal datum
	\[
	w_T=\Lambda^{-1}g_u,
	\]
	and define the corresponding control
	\[
	q_u:=D_x^{5/2}G\,w_u.
	\]
	We then let \(\Gamma_c(u)\) denote the solution of
	\begin{equation}\label{eq:Gamma-c-def-rev}
		\left\{
		\begin{aligned}
			\partial_t v
			+\beta\,\partial_x^5 v
			+\gamma\,\partial_x^{-1}\partial_y^2 v
			+GD_x^5Gv
			&= f_u + GD_x^{5/2}q_u,\\
			v(0)&=u_0.
		\end{aligned}
		\right.
	\end{equation}
By construction, \(\Gamma_c(u)\) satisfies the terminal condition \(\Gamma_c(u)(T)=u_1\), for every \(u\in Z^{s,T}\). Indeed, by Duhamel's formula, the definition of \(g_u\), and the identity \(\Lambda w_T=\Phi(w_T)(T)\), one finds
	\[
	\Gamma_c(u)(T)
	=\mathcal{S}(T)u_0
	-\int_0^T \mathcal{S}(T-\tau)f_u(\tau)\,d\tau
	+\Lambda(\Lambda^{-1}g_u)
	=u_1.
	\]
	
	We claim that \(\Gamma_c\) is a contraction on a sufficiently small ball in \(Z^{s,T}\).
	Fix \(R>0\) and consider
	\[
	B_R:=\{u\in Z^{s,T}:\ \|u\|_{Z^{s,T}}\le R\}.
	\]
	Applying the linear estimate \eqref{eq:apriori-limit-5KP-X} to \eqref{eq:Gamma-c-def-rev},
	we obtain
	\[
	\|\Gamma_c(u)\|_{Z^{s,T}}
	\le C(s,T)\Bigl(\|u_0\|_{X_s}
	+\|f_u\|_{L^2(0,T;X_{s-5/2})}
	+\|q_u\|_{L^2(0,T;X_s)}\Bigr).
	\]
	The bilinear estimate from Lemma \ref{lem:bilinear-kp5-X} yields
	\begin{equation}\label{eq:fu-bound-rev}
		\|f_u\|_{L^2(0,T;X_{s-5/2})}
		\le C\|u\|_{Z^{s,T}}^2.
	\end{equation}
	On the other hand, by Proposition \ref{prop:BS-limit-5KP-X},
	\eqref{eq:Lambda-inv-bound-rev}, and the definition of \(g_u\),
	\[
	\|q_u\|_{L^2(0,T;X_s)}
	\le C\|w_u\|_{Z^{s,T}}
	\le C\|w_T\|_{X_s}
	\le C\|\Lambda^{-1}g_u\|_{X_s}
	\le C\|g_u\|_{X_s},
	\]
	and therefore, using again \eqref{eq:fu-bound-rev},
	\begin{equation}\label{eq:qu-bound-rev}
		\|q_u\|_{L^2(0,T;X_s)}
		\le C(s,T)\Bigl(\|u_0\|_{X_s}+\|u_1\|_{X_s}
		+\|u\|_{Z^{s,T}}^2\Bigr).
	\end{equation}
	Combining the preceding estimates, we arrive at
	\[
	\|\Gamma_c(u)\|_{Z^{s,T}}
	\le C_0\bigl(\|u_0\|_{X_s}+\|u_1\|_{X_s}\bigr)
	+C_1\|u\|_{Z^{s,T}}^2,
	\]
	for some constants \(C_0,C_1>0\) depending only on \(s\) and \(T\).
	
	A similar argument applied to the difference of two trajectories shows that, for
	\(u,v\in B_R\),
	\[
	\|\Gamma_c(u)-\Gamma_c(v)\|_{Z^{s,T}}
	\le C_1\bigl(\|u\|_{Z^{s,T}}+\|v\|_{Z^{s,T}}\bigr)\|u-v\|_{Z^{s,T}}
	\le 2C_1R\,\|u-v\|_{Z^{s,T}}.
	\]
	Choose
	\[
	R:=\frac{1}{4C_1},
	\qquad
	\delta:=\frac{R}{2C_0}.
	\]
	If
	\[
	\|u_0\|_{X_s}+\|u_1\|_{X_s}\le \delta,
	\]
	then \(\Gamma_c\) maps \(B_R\) into itself and is a strict contraction on \(B_R\). The Banach fixed-point theorem then yields a unique \(u\in B_R\subset Z^{s,T}\) such that \(\Gamma_c(u)=u\). Defining \(q:=q_u\), we conclude that the pair \((u,q)\) solves \eqref{eq:nonlin-control} with $u(0)=u_0,$ and $u(T)=u_1$. 
	
	Finally, the control cost estimate follows from \eqref{eq:qu-bound-rev} together with the bound \(\|u\|_{Z^{s,T}}\le R\):
	\[
	\|q\|_{L^2(0,T;X_s)}
	\le C(s,T)\bigl(\|u_0\|_{X_s}+\|u_1\|_{X_s}\bigr).
	\]
	This completes the proof. \qed

\section{Concluding remarks and perspectives}\label{Secfinal}

\subsection{Extensions to general nonlinearities} The analysis developed in this work is robust and extends to a broad class of fifth-order KP-type equations, in close analogy with higher-order KdV models \cite{Grunrock2010,KenigPilod2016,FloreSmith2019}. The key ingredients, the feedback structure $G D_x^5 G$, the KP-adapted scale $X_s$, and the smoothing properties of the linear flow, allow one to treat nonlinearities that are polynomials in $u$ and its $x$-derivatives up to order three.

More precisely, consider
$$
\partial_t u
+ \beta\,\partial_x^5 u
+ \gamma\,\partial_x^{-1}\partial_y^2 u
+ \mathcal{N}\bigl(u,\partial_x u,\partial_x^2 u,\partial_x^3 u\bigr), 
+ G D_x^5 G u = 0,\qquad \text{on }\Omega=\mathbb{T}\times\mathbb{R},
$$
where $\mathcal{N}$ satisfies:
\begin{itemize}
\item $\mathcal{N}(0)=0$ (equilibrium at the origin);
\item preservation of the $x$-mean;
\item local Lipschitz continuity
$\mathcal{N}:X_{s,0}\to X_{s-5/2,0}$ for $s>2$.
\end{itemize}
These assumptions are natural in the $X_s$-framework and are ensured by the bilinear estimates established in this paper. Typical examples for $\mathcal{N}\bigl(u,\partial_x u,\partial_x^2 u,\partial_x^3 u\bigr)$ include $u^2\partial_x u$, $\partial_x u\,\partial_x^2 u + u\,\partial_x^3 u$, and mixed polynomial nonlinearities. Under these conditions, the main results of the paper extend with only minor modifications:
\begin{itemize}
\item local well-posedness follows from the same contraction argument in $Z^{s,T}$;
\item exponential stabilization holds in the small-data regime by combining linear decay with nonlinear estimates;
\item local exact controllability is obtained via the same HUM-based fixed-point scheme, since the nonlinearity enters only through estimates in $L^2(0,T;X_{s-5/2})$.
\end{itemize}

In summary, the feedback law $G D_x^5 G$ provides a unified mechanism ensuring well-posedness, stabilization, and controllability for a wide class of fifth-order KP-type models.

\subsection{Perspectives on the bidimensional torus} A natural extension of this work is the study of the fifth-order KP equation on
the fully periodic domain $\mathbb{T}^2=\mathbb{T}_x\times\mathbb{T}_y$:
$$
\partial_t u + \beta\,\partial_x^5 u + u\,\partial_x u
+ \gamma\,\partial_x^{-1}\partial_y^2 u = 0.
$$
While this setting is conceptually close, it introduces several substantial difficulties.

\medskip
\noindent
\textbf{(i) Mean-zero structure.} A suitable mean-zero constraint is required to define $\partial_x^{-1}$ as a Fourier multiplier. On $\mathbb{T}^2$, this is typically imposed globally, leading to a reduced phase space analogous to $L^2_0(\mathbb{T}^2)$, in the same spirit as in \cite{RivasSun2020} for the
third-order KP-II equation.

\medskip
\noindent
\textbf{(ii) Fully discrete spectrum.} Unlike the cylindrical geometry $\mathbb{T}\times\mathbb{R}$, the torus $\mathbb{T}^2$ yields a discrete spectrum in both variables. The symbol $\beta k^5 - \gamma \ell^2/k$ exhibits a more intricate structure, and the analysis of propagation and dispersion requires new spectral or microlocal tools.

\medskip
\noindent
\textbf{(iii) Well-posedness issues.} Although significant progress has been made for fifth-order KP equations \cite{ST1,ST2,Robert2019,KleinSaut2021}, a complete well-posedness theory in the KP-adapted scale $X_{s,0}(\mathbb{T}^2)$ remains open. Bridging the gap between energy-based approaches and the functional framework used here is a key step toward control applications.

\medskip
\noindent
\textbf{(iv) Unique continuation and observability.} The unique continuation property proved in this work relies strongly on the mixed geometry $\mathbb{T}\times\mathbb{R}$, combining analyticity in $x$ and Fourier reduction in $y$. In the fully periodic case, the absence of continuous frequencies and the discrete coupling between modes suggest that new microlocal or semiclassical techniques will be necessary. Whether the geometric features observed for third-order KP equations, such as the
distinction between vertical and horizontal control regions, persist at fifth order remains an open question.
	
	\subsection*{Acknowledgment} This research was conducted during a series of academic visits between the authors at the Federal University of Pernambuco and the Federal University of Piauí. The authors sincerely appreciate the hospitality and institutional support extended by both universities.

\subsection*{Funding} Capistrano-Filho was partially supported by CNPq grant numbers 301744/2025-4 and  421573/2023-6, CAPES/COFECUB grant number 88887.879175/2023-00. Capistrano-Filho and Nascimento was partially supported by PROPG (UFPE) \textit{via} PROAP resources.


\begin{thebibliography}{99}

\bibitem{AmannLinear1995}
H. Amann,
\emph{Linear and Quasilinear Parabolic Problems, Vol. I: Abstract Linear Theory},
Monographs in Mathematics, vol.~89, Birkh\"auser, Basel, 1995.

\bibitem{Aubin}
J.-P.~Aubin,
\emph{Un théorème de compacité},
 C. R. Acad. Sci. Paris, \textbf{256} (1963), 5042--5044. 

\bibitem{BoSm1975}
J.~L.~Bona and R.~Smith,
\emph{The initial-value problem for the Korteweg--de Vries equation},
Philos.\ Trans.\ Roy.\ Soc.\ London Ser.\ A \textbf{278} (1975), 555--601.

\bibitem{Boukarou2021}
A.~Boukarou, K.~Zennir, and S.~Georgiev,
\emph{Global well-posedness for the fifth-order Kadomtsev--Petviashvili II equation in anisotropic Gevrey spaces},
Discrete Contin.\ Dyn.\ Syst.\ \textbf{18} (2021), no.~2, 119--136.

 \bibitem{Corduneanu1989}
  C.~Corduneanu,
  \emph{Almost Periodic Functions},
  2nd ed., Chelsea, New York, 1989.



\bibitem{Craig2005}
W.~Craig, P.~Guyenne, and H.~Kalisch,
\emph{Hamiltonian long-wave expansions for free surfaces and interfaces},
Comm.\ Pure Appl.\ Math.\ \textbf{58} (2005), 1587--1641.

\bibitem{EngelNagel2000}
K.-J.~Engel and R.~Nagel,
\emph{One-Parameter Semigroups for Linear Evolution Equations},
Graduate Texts in Mathematics, vol.~194, Springer, New York, 2000.

\bibitem{Erbay2022}
H.~A.~Erbay, S.~Erbay, and A.~Erkip,
\emph{On the full dispersion Kadomtsev--Petviashvili equations for internal waves over variable topography},
Stud.\ Appl.\ Math.\ \textbf{149} (2022), 702--747.

\bibitem{FloreSmith2019}
C.~Flores and D.~L.~Smith,
\emph{Control and stabilization of the periodic fifth-order Korteweg--de Vries equation},
ESAIM Control Optim.\ Calc.\ Var.\ \textbf{25} (2019), Art.~38.


\bibitem{Grunrock2010}
A.~Gr\"unrock,
\emph{On the hierarchies of higher order mKdV and KdV equations},
Cent.\ Eur.\ J.\ Math.\ \textbf{8} (2010), 500--536.



\bibitem{IoKe2007}
A.~D.~Ionescu and C.~E.~Kenig,
\emph{Global well-posedness of the Benjamin--Ono equation in low-regularity spaces},
J.\ Amer.\ Math.\ Soc.\ \textbf{20} (2007), 753--798.


\bibitem{KenigPilod2016}
C.~E.~Kenig and D.~Pilod,
\emph{Local well-posedness for the KdV hierarchy at high regularity},
Adv.\ Differential Equations \textbf{21} (2016), 801--836.

\bibitem{KleinSaut2021}
C.~Klein and J.-C.~Saut,
\emph{Nonlinear dispersive equations---inverse scattering and PDE methods},
Appl.\ Math.\ Sci.\ \textbf{209}, Springer, Cham, 2021.

\bibitem{KP1970}
B.~B.~Kadomtsev and V.~I.~Petviashvili,
\emph{On the stability of solitary waves in weakly dispersive media},
Sov.\ Phys.\ Dokl.\ \textbf{15} (1970), 539--541.




\bibitem{LiXiao2008}
J.~Li and J.~Xiao,
\emph{Well-posedness of the fifth order Kadomtsev--Petviashvili I equation in anisotropic Sobolev spaces},
J.\ Math.\ Pures Appl.\ \textbf{90} (2008), 338--352.

\bibitem{LiYanZhang2017}
Y.~Li, W.~Yan, and Y.~Zhang,
\emph{Global well-posedness of a fifth-order KP-I equation},
preprint arXiv:1712.09334 (2017).

\bibitem{Lions1988}
J.-L.~Lions,
\emph{Contr\^olabilit\'e exacte et stabilisation de syst\`emes distribu\'es, Vol.~I},
Masson, Paris, 1988.

\bibitem{LR}
F.~Linares and L.~Rosier,
\emph{Control and stabilization of the Benjamin--Ono equation on a periodic domain},
Trans.\ Amer.\ Math.\ Soc.\ \textbf{367} (2015), 4595--4626.

\bibitem{Molinet2011}
L.~Molinet, J.-C.~Saut, and N.~Tzvetkov,
\emph{Global well-posedness for the KP-II equation on the background of a non-localized solution},
Ann.\ Inst.\ H.\ Poincar\'e Anal.\ Non Lin\'eaire \textbf{28} (2011), 653--676.


\bibitem{RivasSun2020}
I.~Rivas and C.~Sun,
\emph{Internal controllability of nonlocalized solutions for the KP-II equation},
SIAM J.\ Control Optim.\ \textbf{58} (2020), 1715--1734.

\bibitem{Robert2019}
T.~Robert,
\emph{On the Cauchy problem for the periodic fifth-order KP-I equation},
Differential Integral Equations \textbf{32} (2019), 679--704.



\bibitem{Simon} 
J.~Simon, 
\emph{Compact sets in the space $L^p(0,T;B)$,} 
Annali di Matematica Pura ed Applicata. \textbf{146} (1986), 65--96.

\bibitem{ST1}
J.-C.~Saut and N.~Tzvetkov,
\emph{The Cauchy problem for higher-order KP equations},
J.\ Differential Equations \textbf{153} (1999), 196--222.

\bibitem{ST2}
J.-C.~Saut and N.~Tzvetkov,
\emph{The Cauchy problem for the fifth-order KP equations},
J.\ Math.\ Pures Appl.\ \textbf{79} (2000), 307--338.

\bibitem{Tzvetkov2008}
N.~Tzvetkov,
\emph{Transverse stability issues in Hamiltonian PDE},
in \emph{European Congress of Mathematics}, EMS, 2008, pp.~443--458.

\end{thebibliography}
\end{document}